      \newcommand{\href}[2]{#2}
\renewcommand{\fill}{\operatorname{Fill}}
\newcommand{\sing}{\operatorname{Sing}}
\newcommand{\ine}{\operatorname{Ine}}
\newcommand{\ess}{\operatorname{Ess}}
\newcommand{\diamup}{\mc{D}}
\newcommand{\rate}{\eta}
\newcommand{\floor}[1]{\left\lfloor{#1}\right\rfloor}
\newcommand{\abs}[1]{\left\lvert{#1}\right\rvert}
\newcommand{\norm}[1]{\left\|{#1}\right\|}
\DeclareMathOperator{\bd}{\partial}
\DeclareMathOperator{\cl}{cl}
\DeclareMathOperator{\diam}{\rm{diam}}
\DeclareMathOperator{\conv}{\rm{Conv}}
\DeclareMathOperator{\fix}{\rm{Fix}}
\DeclareMathOperator{\proj}{\rm{P}}
\newcommand{\mc}{\mathcal}
\newcommand{\ol}{\overline}
\renewcommand{\hat}{\widehat}
\newcommand{\til}{\widetilde}
\newcommand{\R}{\mathbb{R}}\newcommand{\N}{\mathbb{N}}
\newcommand{\Z}{\mathbb{Z}}\newcommand{\Q}{\mathbb{Q}}
\newcommand{\T}{\mathbb{T}}
\renewcommand{\SS}{\mathbb{S}}
\newcommand{\sm}{\setminus}
\newcommand{\id}{\mathrm{Id}}
\newcommand{\deck}{\operatorname{Deck}}
\newcommand{\ie}{i.e.\ }
\newcommand{\eg}{e.g.\ }
\newtheorem{theorem}{Theorem}[section] 
\newtheorem{corollary}[theorem]{Corollary}
\newtheorem{lemma}[theorem]{Lemma}
\newtheorem{proposition}[theorem]{Proposition}
\newtheorem*{proposition*}{Proposition}
\newtheorem{claim}{Claim}
\newtheorem*{theorem*}{Theorem}
\newtheorem*{claim*}{Claim}
\newtheorem{theoremain}{Theorem}
\newtheorem{corollarymain}[theoremain]{Corollary}
\newtheorem{questionmain}[theoremain]{Question}
\theoremstyle{definition}
\newtheorem{definition}[theorem]{Definition}
\newtheorem{notation}[theorem]{Notation}
\theoremstyle{remark}
\newtheorem{remark}[theorem]{Remark}
\title{Strictly toral dynamics}
\author{Andres Koropecki}
\address{Universidade Federal Fluminense, Instituto de Matem\'atica e Estat\'\i stica, Rua M\'ario Santos Braga S/N, 24020-140 Niteroi, RJ, Brasil}
\email{ak@id.uff.br}
\author{Fabio Armando Tal}
\address{Instituto de Matem\'atica e Estat\'\i stica, Universidade de S\~ao Paulo, Rua do Mat\~ao 1010, Cidade Universit\'aria, 05508-090 S\~ao Paulo, SP, Brazil}
\email{fabiotal@ime.usp.br}
\thanks{The first author was partially supported by CNPq-Brasil. The second author was partially supported by FAPESP and CNPq-Brasil}
\begin{document}

\begin{abstract} 
This article deals with nonwandering (\eg area-preserving) homeomorphisms of the torus $\T^2$ which are homotopic to the identity and strictly toral, in the sense that they exhibit dynamical properties that are not present in homeomorphisms of the annulus or the plane. This includes all homeomorphisms which have a rotation set with nonempty interior. We define two types of points: inessential and essential. The set of inessential points $\ine(f)$ is shown to be a disjoint union of periodic topological disks (``elliptic islands''), while the set of essential points $\ess(f)$ is an essential continuum, with typically rich dynamics (the ``chaotic region''). This generalizes and improves a similar description by J\"ager. The key result is boundedness of these ``elliptic islands'', which allows, among other things, to obtain sharp (uniform) bounds of the diffusion rates. We also show that the dynamics in $\ess(f)$ is as rich as in $\T^2$ from the rotational viewpoint, and  we obtain results relating the existence of large invariant topological disks to the abundance of fixed points.
\end{abstract}

\maketitle


\setcounter{tocdepth}{1}
\tableofcontents
\section*{Introduction}
The purpose of this article is to study homeomorphisms of the torus $\T^2$ homotopic to the identity which exhibit dynamical properties that are intrinsic to the torus, in the sense that they cannot be present in a homeomorphism of the annulus or the plane. We call such homeomorphisms \emph{strictly toral} (a precise definition is given after the statement of Theorem \ref{th:essine}), and they include the homeomorphisms which have a rotation set with nonempty interior (in the sense of Misiurewicz and Ziemian \cite{m-z}). We will give a description of the dynamics of such maps in terms of ``elliptic islands'' and a ``chaotic region'' which generalizes the one given by J\"ager \cite{jager-elliptic}, and most importantly, we prove the boundedness of elliptic islands. This allows to obtain sharp bounds of the diffusion rates in the chaotic region and has a number of applications.

To be precise with our terminology, let us make a definition. Let $\pi\colon \R^2\to \T^2= \R^2/\Z^2$ be the universal covering. The homeomorphism $f\colon \T^2\to \T^2$ is \emph{annular} if there is some lift $\hat{f}\colon \R^2\to \R^2$ of $f$ such that the deviations in the direction of some nonzero $v\in \Z^2$ are uniformly bounded:
$$-M\leq \langle \hat{f}^n(x)-x, v\rangle \leq M\quad \text{for all $x\in \R^2$ and $n\in \Z$}.$$
 If $f$ is annular, it is easy to see that there is a finite covering of $\T^2$ such that the lift of $f$ to this covering has an invariant annular set (see for example \cite[Remark 3.10]{jager-linearization}), so that in some sense the dynamics of $f$ in a finite covering is embedded in an annulus. Therefore, in order to be strictly toral, a map $f$ must not be annular, and it seems reasonable to require that no positive power of $f$ be annular as well. However, this is not sufficient to qualify as strictly toral: in \cite{kt-irrotational}, an example is given of a homeomorphism $f$ isotopic to the identity such that no power of $f$ is annular, but $\fix(f)$ is \emph{fully essential}. This means that $\fix(f)$ contains the complement of some disjoint union of open topological disks (in the case of our example, just one disk). Such dynamics does not deserve to be called strictly toral: after removing the fixed points, what remains is dynamics that takes place on the plane. 
We mention however that a lift to $\R^2$ of such example has a trivial rotation set $\{(0,0)\}$ but has unbounded orbits in all directions.

The boundedness properties of the dynamics of lifts to the universal covering has been the subject of many recent works \cite{jager-bmm, jager-elliptic, jager-linearization, kk-spreading, davalos}, especially in the context of pseudo-rotations and in the area-preserving setting, or under aperiodicity conditions. In particular, using the notion of annular homeomorphism introduced here, saying that some power of $f$ is annular is equivalent to saying that $f$ is \emph{rationally bounded} in the sense of \cite{jager-linearization}. 

We also need to introduce the notion of \emph{essential} points, which plays a central role in this article. A point $x\in \T^2$ is essential for $f$ if the orbit of every neighborhood of $x$ is an essential subset of $\T^2$ (see \S\ref{sec:essential-set}). Roughly speaking, this says that $x$ exhibits a weak form of ``rotational recurrence''. 
The set of essential points of $f$ is denoted by $\ess(f)$, and the set of inessential points is $\ine(f)=\T^2\sm \ess(f)$. Both sets are invariant, and $\ine(f)$ is open.
We restrict our attention to nonwandering homeomorphisms (this includes, for instance, the area-preserving homeomorphisms). Recall that $f$ is nonwandering if any open set intersects some forward iterate of itself. In that case, it is easy to see that inessential points are precisely the points that belong to some periodic open topological disk in $\T^2$ (see \S\ref{sec:essential}). Note that this does not necessarily mean that $\ine(f)$ is a disjoint union of periodic topological disks, since there may be overlapping (for instance, $\ine(f)$ could be the whole torus, as is the case with the identity map). Our main theorem implies that in the strictly toral case, $\ine(f)$ is indeed an inessential set, in fact a union of periodic ``bounded'' disks. 

In order to state our first theorem, let us give some additional definitions. If $U\subset \T^2$ is an open topological disk, then $\diamup(U)$ denotes the diameter of any connected component of $\pi^{-1}(U)$, and if $\diamup(U)<\infty$ we say that $U$ is bounded (see \S\ref{sec:essential-set} for more details). We say that $f$ is irrotational if some lift of $f$ to $\R^2$ has a rotation set equal to $\{(0,0)\}$. 

\begin{theoremain}\label{th:essine} Let $f\colon \T^2\to \T^2$ be a nonwandering homeomorphism homotopic to the identity. Then one of the following  holds:
\begin{itemize} 
\item[(1)] There is $k\in \N$ such that $\fix(f^k)$ is fully essential, and $f^k$ is irrotational; 
\item[(2)] There is $k\in \N$ such that $f^k$ is annular; or
\item[(3)] $\ess(f)$ is nonempty, connected, and fully essential, and $\ine(f)$ is the union of a family $\mc{U}$ of pairwise disjoint open disks such that for each $U\in \mc{U}$, $\diamup(U)$ is bounded by a constant that depends only on the period of $U$.
\end{itemize}
\end{theoremain}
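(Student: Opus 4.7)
The plan is to assume alternatives (1) and (2) both fail and deduce (3). Suppose, for every $k\in\N$, that $\fix(f^k)$ is not fully essential (or $f^k$ is not irrotational) and $f^k$ is not annular. First I would establish the structure of $\ine(f)$: for each $x\in\ine(f)$, the definition of inessential point together with the nonwandering hypothesis produces a periodic open topological disk containing $x$. Indeed, a neighborhood of $x$ has inessential forward orbit, so its connected component at $x$ is a connected inessential open set, hence sits inside a topological disk, and nonwandering forces the iterates of this component to cycle back. Taking a canonical maximal periodic disk through each point yields a family $\mc{U}$ of pairwise disjoint periodic open disks with $\ine(f)=\bigsqcup_{U\in\mc{U}}U$. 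Consequently $\ess(f)=\T^2\setminus\ine(f)$ is fully essential by definition and connected (removing a disjoint family of open topological disks from $\T^2$ does not disconnect it); $\ess(f)$ cannot be empty because $\ine(f)=\T^2$, combined with nonwandering, would place us in (1) via a Brouwer-type analysis of fixed points.

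It remains to bound $\diamup(U)$ for each $U\in\mc{U}$ of period $n$. Fix a lift $\tilde U\subset\R^2$; for any lift $\hat f$ of $f$, one has $\hat f^n(\tilde U)=\tilde U+v$ for some $v=v(\hat f)\in\Z^2$. Changing $\hat f$ by a $\Z^2$-translation $w$ changes $v$ to $v+nw$, so the class $[v]\in\Z^2/n\Z^2$ is an invariant of $U$. If $[v]\neq 0$, then for every choice of lift $v\neq 0$, so the translates $\tilde U,\tilde U+v,\tilde U+2v,\ldots$ are pairwise distinct components of $\pi^{-1}(U)$. Combining the disjointness of these $v$-translates with the disjointness of the $\Z^2$-translates of $\tilde U$ confines $\tilde U$ to a strip-like region transverse to $v$; and the drift $\hat f^{jn}(\tilde U)=\tilde U+jv$, together with the continuity of $\hat f$, forces orbits passing through $U$ to have uniformly bounded deviations perpendicular to $v$. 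Using nonwandering and the structural picture of $\ess(f)$ to propagate these bounded deviations to all of $\T^2$, one concludes that $f^n$ is annular, contradicting the failure of (2).

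The main obstacle is the case $[v]=0$, where the lift can be chosen so that $\tilde U$ is fully $\hat f^n$-invariant. Here I would appeal to the paper's companion result (alluded to in the abstract) relating large invariant topological disks in the plane to the abundance of fixed points: if $\diam\tilde U$ exceeds a constant depending only on $n$, then the projection of $\fix(\hat f^n)$ to $\T^2$ must be fully essential, which together with the vanishing of the rotation displacement ($v=0$) yields alternative (1) for the iterate $f^n$, a contradiction. The heart of the matter is that a direct Brouwer translation theorem argument yields only a single fixed point in $\cl\tilde U$; obtaining a fully essential set of fixed points requires a careful prime-end rotation number analysis of the invariant disk $\tilde U$ (forcing a continuum of fixed points on its accessible boundary) together with a limiting or covering argument that assembles such continua as one ranges over a putative sequence of periodic disks with unbounded diameter. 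Making this limit rigorous while controlling irrotationality of the resulting iterate is the principal technical challenge of the theorem.
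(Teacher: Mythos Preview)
Your proposal has a genuine structural gap and inverts the logical dependencies of the paper's argument.

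\textbf{The ``fully essential'' step is not justified.} You assert that once $\ine(f)$ is written as a union of periodic disks, $\ess(f)$ is fully essential ``by definition''. This is false: a union of inessential open sets can be essential, even all of $\T^2$ (as with the identity). Nor is it clear that ``maximal periodic disks'' are pairwise disjoint; two such disks of different periods could overlap, and their union need not be inessential. In the paper this is exactly the content of Claim~2, which is a nontrivial argument by contradiction: assuming $\ine(f)$ contains an essential loop, one covers it by finitely many \emph{bounded} periodic disks, shows their rotation vectors all coincide, and deduces that some power of $f$ is annular. The boundedness is essential input here, so the paper must establish it \emph{before} deducing the structure of $\ine(f)$; you are trying to do it the other way around.

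\textbf{The boundedness argument is where the real work lies, and your sketch does not match the paper's method.} Your case $[v]\neq 0$ claims that disjointness of the $\Z^2$-translates of $\tilde U$ ``confines $\tilde U$ to a strip-like region transverse to $v$'', but injective projection to $\T^2$ does not force any directional bound (think of exponentially thin unbounded sets). For the case $[v]=0$ you invoke a prime-ends rotation number analysis; this is not what the paper does. The paper's Theorem~B is proved via Le Calvez's equivariant Brouwer foliation: one reduces to $\fix(f)$ totally disconnected, obtains a dynamically transverse foliation, shows it is gradient-like, and then uses a \emph{linking number} between invariant simply connected sets and singularities (Lemma~\ref{lem:inter-link}, Propositions~\ref{pro:large-X} and~\ref{pro:inter-endpoint}) to force any large invariant disk to have boundary contained in the (totally disconnected) singular set, a contradiction. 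Prime ends do appear in the paper, but only briefly in the index computation of Theorem~\ref{th:reali}, not in the proof of boundedness.
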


Note that case (1) is very restrictive, as it means that the complement of $\fix(f^k)$ is inessential. One can think of this as a \emph{planar} case; \ie the dynamics of $f^k$ can be seen entirely by looking at  a homeomorphism of the plane (since $\T^2\sm \fix(f)$ can be embedded in the plane).  We emphasize that case (1) does not always imply case (2), as shown by the example in \cite{kt-irrotational}. Henceforth, by \emph{strictly toral} nonwandering homeomorphism we will mean one in which neither case (1) or (2) above holds.

Thus, if $f$ is nonwandering and strictly toral, there is a decomposition of the dynamics into a union $\ine(f)$ (possibly empty) of periodic bounded discs which can be regarded as ``elliptic islands'', and a fully essential set $\ess(f)$ which carries the ``rotational'' part of the dynamics. Figure \ref{fig:zaslavsky} shows an example where both sets are nonempty.

It is worth mentioning that the nonwandering hypothesis in Theorem \ref{th:essine} (and also in Theorem \ref{th:bdfix} below) is essential. Indeed, if $f$ is a homeomorphism of $\T^2$ obtained as the time-one map of the suspension flow of a Denjoy example in the circle, then $f^k$ is non-annular and has no fixed points, for any $k\in \N$, but there is an unbounded invariant disk (corresponding to the suspension of the wandering interval).

The main difficulty to prove Theorem \ref{th:essine} is to show that if $f$ is strictly toral, there are no unbounded periodic disks. This is possible thanks to the following theorem, which is a key result of this article.

\begin{theoremain}\label{th:bdfix} If $f\colon \T^2\to \T^2$ is a nonwandering non-annular homeomorphism homotopic to the identity then one and only one of the following properties hold:
\begin{itemize}
\item[(1)] There exists a constant $M$ such that each $f$-invariant open topological disk $U$ satisfies $\diamup(U)<M$; or
\item[(2)] $\fix(f)$ is fully essential and $f$ is irrotational.
\end{itemize}
\end{theoremain}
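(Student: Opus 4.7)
I would argue by contradiction, supposing $f$ satisfies the hypotheses yet neither (1) nor (2) holds. Then there is a sequence $\{U_n\}$ of $f$-invariant open topological disks with $\diamup(U_n)\to\infty$, while simultaneously either $\fix(f)$ is not fully essential or $f$ is not irrotational. The plan is to show that such a sequence, together with non-annularity, already forces both clauses of (2), producing a contradiction.

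\textbf{Fixed points and lift normalization.} For each $n$, let $\tilde U_n\subset\R^2$ be a connected component of $\pi^{-1}(U_n)$. Since $U_n$ is simply connected, $\pi|_{\tilde U_n}$ is a homeomorphism onto $U_n$, and there is a unique lift $\hat f_n$ of $f$ with $\hat f_n(\tilde U_n)=\tilde U_n$. The restriction $\hat f_n|_{\tilde U_n}$ is conjugate via $\pi$ to the nonwandering map $f|_{U_n}$, hence nonwandering on the topological plane $\tilde U_n$; Brouwer's plane translation theorem then yields a fixed point $\tilde p_n\in\tilde U_n$ of $\hat f_n$, which projects to a point of $\fix(f)\cap U_n$. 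Since orbits in $\tilde U_n$ have displacement bounded by $\diamup(U_n)<\infty$, every rotation vector realized by an orbit in $\tilde U_n$ is $(0,0)$ for $\hat f_n$. Writing $\hat f_n=\hat f_0+v_n$ for a fixed reference lift $\hat f_0$ and $v_n\in\Z^2$, this means $-v_n$ lies in the compact convex rotation set of $\hat f_0$; as this set contains only finitely many integer points (Misiurewicz--Ziemian), after passing to a subsequence the $v_n$ are constant, and we may assume that all $\hat f_n$ coincide with a single lift $\hat f$ admitting a family of invariant topological disks $\tilde U_n$ of unbounded diameter, each carrying a fixed point of $\hat f$.

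\textbf{Irrotationality and fully essential fixed set.} By construction $(0,0)\in\rho(\hat f)$. If $\rho(\hat f)$ contained some $w\neq(0,0)$, there would be an orbit of $\hat f$ escaping linearly in direction $w$, which cannot meet any $\tilde U_n$. As the translates $\bigcup_{v\in\Z^2}(\tilde U_n+v)$ fill larger and larger regions of $\R^2$ and their complement has a controlled shape, the non-annularity hypothesis rules out any such preserved escape direction, forcing $\rho(\hat f)=\{(0,0)\}$; this is irrotationality. To conclude that $\fix(f)$ is fully essential, suppose for contradiction that there is an essential simple loop $\gamma\subset\T^2\sm\fix(f)$, and lift it to an arc $\tilde\gamma$ joining $x$ to $x+v$ in $\R^2$ with $v\in\Z^2\sm\{0\}$ primitive. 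Since each $\tilde U_n$ and each of its $\Z^2$-translates contains a fixed point of $\hat f$, and $\diam(\tilde U_n)\to\infty$ forces these translates to meet every sufficiently long arc, $\tilde\gamma$ must cross some $\tilde U_n+v_0$; a local index argument around the fixed point inside then yields a fixed point on $\gamma$, contradicting $\gamma\subset\T^2\sm\fix(f)$.

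\textbf{Main obstacle.} The crux, and where the non-annularity hypothesis is indispensable, is the irrotationality step: ruling out nontrivial rotation vectors in the coexistence of arbitrarily large invariant disks of zero rotation. Without non-annularity, zero-rotation invariant disks and nonzero rotation vectors coexist easily (as in genuinely annular dynamics). Making the exclusion rigorous will likely require equivariant Brouwer theory and the detailed topological structure of invariant disks developed in the earlier sections, together with the characterization of annular maps in terms of bounded directional displacement given in the introduction.
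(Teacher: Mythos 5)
Your first step (passing to a sequence of invariant disks, fixing a single lift with fixed points inside them via Brouwer's translation theorem) matches the paper's opening move in outline, but already contains a slip: the negation of (1) only provides invariant disks of arbitrarily large, \emph{possibly infinite}, diameter, while your displacement/rotation-vector normalization assumes $\diamup(U_n)<\infty$. In the paper, boundedness of these disks is not an assumption but a major conclusion, obtained only after the foliation machinery is in place. The genuine gap, however, is in your second step, which is exactly the hard content of the theorem and which you effectively concede in your closing paragraph. The assertion that an orbit with nonzero rotation vector ``cannot meet any $\tilde U_n$'' and that the translates $\bigcup_{v\in\Z^2}(\tilde U_n+v)$ ``fill larger and larger regions of $\R^2$'' with a complement of ``controlled shape'' is unsubstantiated and, as stated, not true: since $\pi(U_n)$ is inessential the translates are pairwise disjoint, their complement is large, and a large diameter in one direction says nothing about the transverse direction --- a priori the disks could be long and thin, leaving wide channels along which orbits drift. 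Similarly, in your fully-essential step, a compact lifted arc $\tilde\gamma$ of an essential loop need not meet any translate of $\tilde U_n$ unless one first proves $\diamup_{v^\perp}(U_n)\to\infty$ in the relevant direction (the paper devotes a separate claim, using non-annularity and Proposition \ref{pro:wall-annular}, to unboundedness in \emph{every} direction), and even granting the crossing, the inference ``$\gamma$ crosses an invariant disk containing a fixed point, hence a local index argument gives a fixed point on $\gamma$'' is a non sequitur.

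For comparison, the paper's route is: if $\fix(f)$ is essential, non-annularity plus Brown--Kister and a recurrence argument give case (2) directly; if $\fix(f)$ is inessential and (1) fails, one collapses $\fill(\fix(f))$ to a totally disconnected set, proves the large invariant disks are unbounded in every direction (the essential-loop argument there is quite different from yours: one produces many disjoint subarcs $\gamma_i$ of the loop with $f([\gamma_i])\cap[\gamma_i]\neq\emptyset$, contradicting a uniform lower bound on displacement along the loop), then uses Jaulent's theorem and Le Calvez's equivariant Brouwer theorem to build a \emph{gradient-like} dynamically transverse foliation whose regular leaves are uniformly bounded connections between singularities, introduces a linking number of an invariant simply connected set with a singularity to show every leaf meeting $\ol{U}_n$ has an endpoint in $U_n$, and finally derives $\bd U\subset\sing(\hat{\mc{F}})$, contradicting total disconnectedness (with a second, separate argument once the disks are known to be bounded and disjoint). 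None of these tools appear in your outline, so the plan as written does not close; repairing it would require essentially the equivariant-foliation and linking-number machinery you only allude to at the end.
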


\subsection*{Applications}

If $f\colon \T^2\to \T^2$ is a homeomorphism homotopic to the identity and $\hat{f}\colon \R^2\to \R^2$ is a lift of $f$, then given an open set $U\subset \T^2$ we may define (as in \cite{jager-elliptic}) the local rotation set on $U$ as the set $\rho(\hat{f}, U)\subset \R^2$ consisting of all possible limits of sequences of the form $(\hat{f}^n(z_i)-z_i)/n_i$,
where $\pi(z_i)\in U$ $n_i\to \infty$ as $i\to \infty$. 

Observe that in particular $\rho(\hat{f})=\rho(\hat{f},\T^2)$ is the classic rotation set of $\hat{f}$ as defined in \cite{m-z}. If $\rho(\hat{f})$ has nonempty interior, this provides a great deal of global information about $f$; for instance there is positive entropy \cite{llibre-mackay}, abundance of periodic orbits \cite{franks-reali} and ergodic measures with all kinds of rotation vectors \cite{m-z2}.

Assume that $\rho(\hat{f})$ has nonempty interior (and therefore $f$ is strictly toral). We may define the \emph{diffusion rate} $\rate(\hat{f},U)$ on an open disk $U$ as the inner radius of the convex hull of $\rho(\hat{f},U)$ (which does not depend on the lift). Roughly speaking, this measures the minimum linear rate of growth of $U$ in all homological directions. In \cite{jager-elliptic}, a set $\mc{C}(f)$ is defined consisting of all points $x\in \T^2$ such that every neighborhood of $x$ has positive diffusion rate. This implies that $\mc{C}(f)$ has (external) sensitive dependence on initial condition, which is why it is regarded as the ``chaotic'' region. It is also shown in \cite{jager-linearization} that every point of the set $\mc{E}(f)=\T^2\sm\mc{C}(f)$ belongs to some periodic topological disk $U$, and $\rho(\hat{f},U)$ is a single point (however, we will not use these facts, as they are a consequence of Theorem \ref{th:chaotic}).

In smooth area-preserving systems, KAM theory implies that periodic disks frequently appear, even in a persistent way, near elliptic periodic points \cite{moser-kam}. Maximal periodic disks are hence commonly referred to as \emph{elliptic islands}; these are the components of $\mc{E}(f)$. There are well documented examples in the physics literature which exhibit a pattern of elliptic islands surrounded by a complementary ``chaotic'' region with rich dynamical properties, often called the \emph{instability zone}. The most well known example is probably the Chirikov-Taylor standard map \cite{x_1}. Another example, which falls under our hypotheses, is the \emph{Zaslavsky web map} given by $f(x,y)=M^4(x,y)$, where $M(x,y) =  (y, -x-K\sin(2\pi y-c))$ (see Figure \ref{fig:zaslavsky} for its phase portrait when $K=0.19,\, c=1.69$). For such map, properties of the instability region like width and rates of diffusion are better understood (\cite{x_3, pekarsky}), but at present no general theory is known.  Similar dynamics also appear in other models of physical relevance (see \cite{harper}).

\begin{figure}[ht]
\centering
\includegraphics[height=6cm]{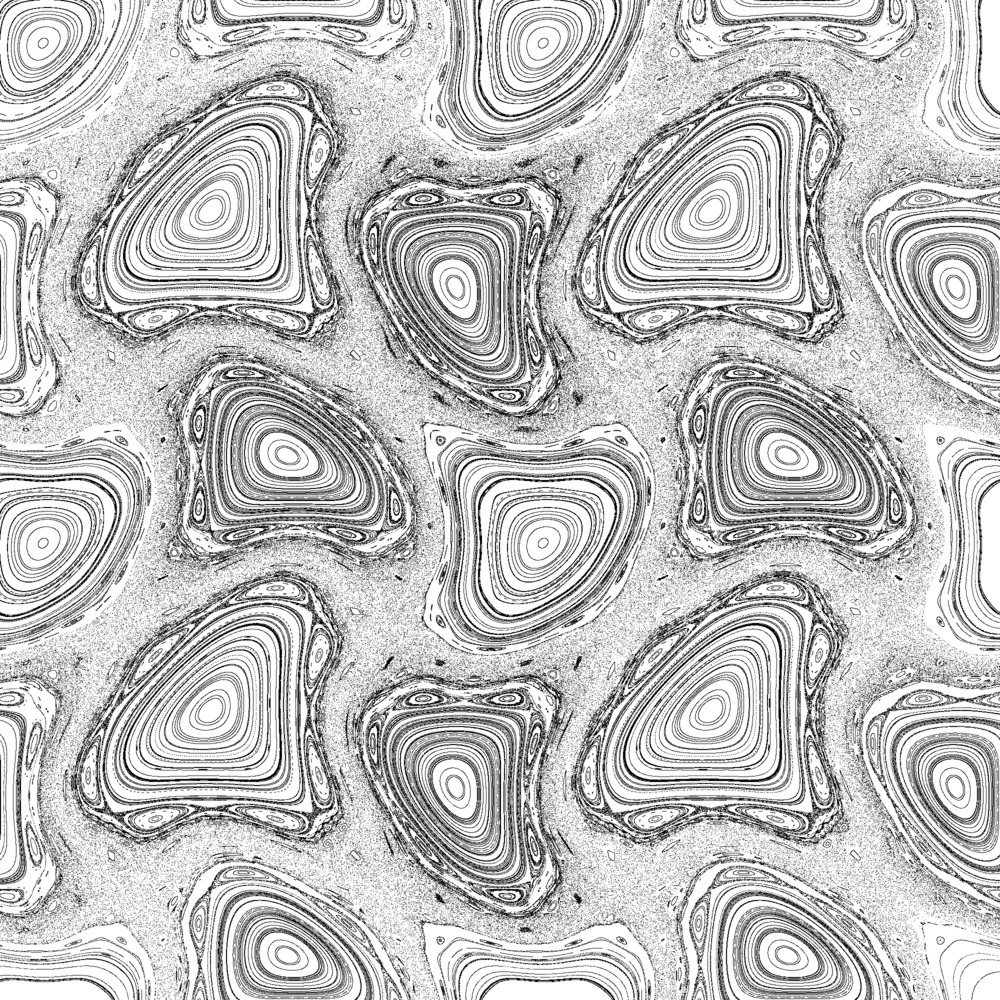}
\caption{Phase portrait for the Zaslavsky web map}
\label{fig:zaslavsky}
\end{figure}

The next result provides a more precise description of the chaotic region in the general setting: it says that $\mc{C}(f)$ concentrates the interesting (from the rotational viewpoint) dynamics, it gives topological information about $\mc{C}(f)$ (namely, it is a fully essential continuum), and it shows that there is \emph{uniform diffusion rate} in the chaotic region. The latter means that is there is a constant $\rate_0>0$ depending only on $f$ such that whenever $x\in \mc{C}(f)$ and $U$ is a neighborhood of $x$, one has $\rate(f,U)=\eta_0$. 

Let us say that an invariant set $\Lambda\subset \T^2$ is \emph{externally transitive} if for any pair of open sets $U$, $V$ intersecting $\Lambda$ there is $n\in \Z$ such that $f^n(U)\cap V\neq \emptyset$, and $\Lambda$ is \emph{externally sensitive on initial conditions} if there is $c>0$ such that for any $x\in \Lambda$ and any neighborhood $U$ of $x$ there is $n\in\N$ such that $\diam(f^n(U))>c$. 
 
\begin{theoremain}\label{th:chaotic} Let $f\colon \T^2\to \T^2$ is a nonwandering homeomorphism homotopic to the identity and $\hat{f}$ a lift of $f$ to $\R^2$. Suppose that $\rho(\hat{f})$ has nonempty interior. Then,
\begin{enumerate}
\item[(1)] $\mc{C}(f)=\ess(f)$, which is a fully essential continuum and $\mc{E}(f)=\ine(f)$ is a disjoint union of periodic bounded disks;
\item[(2)] $\ess(f)$ is externally transitive and sensitive on initial conditions;
\item[(3)] For any $x\in \ess(f)$ and any neighborhood $U$ of $x$, $\conv(\rho(\hat{f},U))=\rho(\hat{f})$;
\end{enumerate}
\end{theoremain}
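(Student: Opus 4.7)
The plan is to first apply Theorem \ref{th:essine} to obtain the topological decomposition, then leverage Theorem \ref{th:bdfix} to prove item (3) (which contains the nontrivial part of item (1)), after which item (2) follows easily.

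Since $\rho(\hat f)$ has nonempty interior, $f$ can be neither annular (annularity would confine the rotation set to a line) nor irrotational on any power, so $f$ is strictly toral and case (3) of Theorem \ref{th:essine} applies: $\ess(f)$ is a fully essential continuum and $\ine(f)=\bigsqcup\mc U$ is a disjoint union of periodic bounded open disks. The easy inclusion $\ine(f)\subseteq\mc E(f)$ follows at once: for $U\in\mc U$ of period $q$, a connected component $\tilde U\subset\pi^{-1}(U)$ satisfies $\hat f^q(\tilde U)=\tilde U+v$ for some $v\in\Z^2$, with $\tilde U$ of bounded diameter, forcing $\rho(\hat f,U)=\{v/q\}$ and $\rate(\hat f,U)=0$.

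The heart of the matter is item (3): for every $x\in\ess(f)$ and every neighborhood $U$ of $x$, I would show $\conv\rho(\hat f,U)=\rho(\hat f)$. One inclusion is immediate; for the other I would argue by contradiction. Assuming some $v_0\in\rho(\hat f)$ lies outside $\conv\rho(\hat f,U)$, a supporting hyperplane gives $w\in\R^2\sm\{0\}$ and $c_0\in\R$ with $\langle v,w\rangle\le c_0$ for every $v\in\rho(\hat f,U)$ while $\langle v_0,w\rangle>c_0$, which by a routine compactness argument yields a uniform forward deviation bound $\langle\hat f^n(\tilde z)-\tilde z,w\rangle\le c_0 n+C_0$ for all $\tilde z\in\pi^{-1}(U)$ and $n\ge 0$. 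The main obstacle is to convert this local deviation bound into a global obstruction to $x$ being essential. My first approach would be to choose a rational $p/q\in\inter\rho(\hat f)$ with $\langle p/q,w\rangle>c_0$; by Franks' theorem this vector is realized by a periodic orbit of $f$, and since inessential periodic disks have bounded diameter by Theorem \ref{th:essine}(3), nonwandering combined with $x\in\ess(f)$ should let one produce orbits starting in $U$ that approximate this periodic orbit long enough to contradict the deviation estimate. An alternative route is to use the bounded deviation to construct an $f$-invariant open topological disk in $\T^2$ whose lift extends unboundedly in the direction $-w$, contradicting Theorem \ref{th:bdfix}. Once (3) is established, $\rate(\hat f,U)\ge\rate_0>0$ on every neighborhood of every point of $\ess(f)$, where $\rate_0$ is the inner radius of $\rho(\hat f)$; combined with $\ine(f)\subseteq\mc E(f)$ this gives $\ess(f)=\mc C(f)$ and completes item (1).

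Item (2) then follows quickly from (3). Sensitivity is immediate: for any $x\in\ess(f)$ and any neighborhood $U$, positive uniform diffusion forces $\diam\hat f^n(\tilde U)\to\infty$ for every lift. For external transitivity, given open sets $U,V$ meeting $\ess(f)$, part (3) ensures $\conv\rho(\hat f,U)$ contains a neighborhood of $0$, so $\bigcup_n\hat f^n(\tilde U)$ spreads across $\R^2$ at positive linear rate in every direction; its projection must then cover the complement of any inessential set and in particular meet $V$.
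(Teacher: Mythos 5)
The overall skeleton (reduce to case (3) of Theorem \ref{th:essine}, prove item (3) as the key claim, deduce (1) and (2)) matches the paper, and your easy inclusion $\ine(f)\subseteq\mc{E}(f)$ and the sensitivity part of (2) are fine. But the proof of item (3) has a genuine gap: after the supporting-hyperplane reduction (which, incidentally, only yields $\langle \hat f^n(\tilde z)-\tilde z,w\rangle\le (c_0+\epsilon)n+C_\epsilon$ rather than the bound with $c_0$ itself --- harmless, but worth stating correctly), you never actually derive a contradiction, and neither of your two routes works as sketched. Route 1 founders on a quantifier problem: to track a periodic orbit realizing $p/q$ for time $qn$ within a bounded error you must enter a neighborhood of it whose size $\delta_n$ depends on $n$, while the time $m$ an orbit of $U$ needs to reach that neighborhood depends on $\delta_n$; to beat the deviation bound you need $n\gg m$, i.e.\ you need the periodic point to lie in $\cl(f^m(U))$ for a \emph{single fixed} $m$ (or entry times bounded independently of $\delta$). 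Nonwandering plus $x\in\ess(f)$ only gives that the point lies in $\cl\bigl(\bigcup_{m}f^m(U)\bigr)$, which is strictly weaker, and homeomorphisms have no shadowing to bridge this. (Even placing the realizing periodic orbit in $\ess(f)$ would require Theorem \ref{th:reali}, and still would not fix the time-scale mismatch.) Route 2 is only a wish: no construction of an $f$-invariant open disk with unbounded lift is given, and it is not clear how the deviation bound on the single small ball $U$ would produce one.

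The paper's device, which is the missing idea, is topological rather than orbit-tracking: by Corollary \ref{coro:fully} the orbit $U'=\bigcup_{n\in\Z}f^n(U)$ is open, connected and fully essential, so a point $z$ whose orbit realizes the extremal vector $v$ maximizing $\proj_w$ on $\rho(\hat f)$ can be enclosed in a closed topological disk $D$ with $\bd D\subset U'$, and by compactness $\bd D\subset\bigcup_{|i|\le N}f^i(U)$ for a \emph{finite} $N$. Since $\hat f^n(\hat D)$ is a disk bounded by $\hat f^n(\bd\hat D)$, the supremum of $\proj_w$ over it is attained on the boundary, so some boundary point moves at least as fast as $z$ in the direction $w$; shifting by at most $N$ iterates puts it in $\pi^{-1}(U)$, yielding $v'\in\rho(\hat f,U)$ with $\proj_w(v')=\sup\proj_w(\rho(\hat f))$, the desired contradiction. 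The bounded shift $N$ is exactly what resolves the quantifier issue your route 1 runs into. Finally, your external-transitivity argument also needs repair: ``spreads at positive linear rate in every direction, hence its projection covers the complement of any inessential set'' is not a valid inference (an inessential open invariant set can have lifts unbounded in all directions --- that is precisely the difficulty behind Theorem \ref{th:bdfix}); the correct step is that $\bigcup_n f^n(U)$ and $\bigcup_n f^n(V)$ are each fully essential (Corollary \ref{coro:fully}, via Proposition \ref{pro:annular}), and two fully essential open sets must intersect.
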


Another result that reflects how $\ess(f)$ carries rich dynamics is related to the realization of rotation vectors by measures or periodic orbits (see \S\ref{sec:rotation} for definitions). It is known that every extremal or interior point $v$ of $\rho(f)$ is realized by an ergodic measure \cite{m-z,m-z2}, and if $v\in \Q^2$ then it is realized by a periodic point \cite{franks-reali, franks-reali2, m-z} (see also \cite{franks-reali3}). The next theorem guarantees that one can obtain such type of realization \emph{in the set of essential points}.

\begin{theoremain}\label{th:reali} Suppose $f\colon \T^2\to \T^2$ is nonwandering, homotopic to the identity and strictly toral, and let $\hat{f}$ be a lift of $f$ to $\R^2$. Then,
\begin{enumerate}
\item[(1)] Any rational vector of $\rho(\hat{f})$ that is realized by some periodic point is also realized by a periodic point in $\ess(f)$).
\item[(2)] If $\mu$ is an ergodic Borel probability measure $\mu$ with associated rotation vector $\rho_\mu(\hat{f})\notin \Q^2$, then $\mu$ is supported on $\ess(f)$.
\end{enumerate}
\end{theoremain}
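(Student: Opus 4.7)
The plan is to dispose of (2) by a direct ergodic-theoretic argument and then reduce (1) to producing a periodic point of a certain lift on the boundary of a periodic bounded disk, which will be the main obstacle.

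For (2), note that $\ess(f)$ is $f$-invariant, so ergodicity gives $\mu(\ess(f))\in\{0,1\}$. Assume for contradiction $\mu(\ine(f))=1$. By Theorem \ref{th:essine}, $\ine(f)$ is a countable disjoint union of periodic bounded open topological disks, whose $f$-orbits partition it into countably many disjoint $f$-invariant Borel sets; ergodicity then concentrates $\mu$ on a single orbit $\{U,f(U),\dots,f^{k-1}(U)\}$, with $\hat f^k(\tilde U)=\tilde U+w$ for some $w\in\Z^2$. Since $\diamup(U)<\infty$, we have $\abs{\hat f^{kn}(\tilde x)-\tilde x-nw}\le\diamup(U)$ for every $\tilde x\in\tilde U$ and every $n\ge 0$; Birkhoff's theorem applied to the displacement cocycle $x\mapsto\hat f(\tilde x)-\tilde x$ then yields $\rho_\mu(\hat f)=w/k\in\Q^2$, contradicting the hypothesis.

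For (1), let $v=p/q\in\rho(\hat f)\cap\Q^2$ be realized by a periodic point $z$ of $f$, so that $\hat f^q(\tilde z)=\tilde z+p$ for some lift $\tilde z$. If $z\in\ess(f)$ there is nothing to prove, so assume $z$ lies in a component $U$ of $\ine(f)$. Then $f^q(U)=U$ and, since $\hat f^q(\tilde z)\in\tilde U+p$, we have $\hat f^q(\tilde U)=\tilde U+p$; setting $g=\hat f^q-p$ gives a lift of $f^q$ with $g(\tilde U)=\tilde U$ and $g(\tilde z)=\tilde z$. Because $\ine(f)$ is open with pairwise disjoint components, $\partial U\subset\ess(f)$, and any periodic point $\tilde z'\in\partial\tilde U$ of $g$ of period $s$ satisfies $\hat f^{qs}(\tilde z')=\tilde z'+sp$ (using $g^s=\hat f^{qs}-sp$), hence projects to a periodic point of $f$ in $\ess(f)$ realizing $v=sp/(qs)$ under the original lift $\hat f$. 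The argument thus reduces to producing a periodic orbit of $g$ on $\partial\tilde U$.

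This last step is the main obstacle. Brouwer's plane translation theorem yields only a fixed point of $g$ somewhere in $\R^2$---in fact $\tilde z$ itself is one---and for an arbitrary orientation-preserving plane homeomorphism preserving a bounded disk with an interior fixed point there need be no periodic orbit on the boundary (irrational rotations of the closed disk are a standard counterexample). The plan is to exploit three features of the present setting absent in that example: $g$ commutes with $\Z^2$-translations, $g|_{\tilde U}$ is conjugate via $\pi|_{\tilde U}$ to the nonwandering homeomorphism $f^q|_U$, and $f$ is strictly toral. Passing to the prime-end compactification of $\tilde U$, the induced boundary homeomorphism has a rotation number $\alpha$, and the goal is to establish $\alpha\in\Q$: an irrational $\alpha$, combined with the $\Z^2$-equivariance of $g$ and the bounded-disk theorem \ref{th:bdfix}, would produce behaviour incompatible with strict torality---either an unbounded $g$-invariant disk or an annular direction for some iterate of $f$. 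Once $\alpha$ is rational, a periodic prime end together with the nonwandering hypothesis on $g|_{\tilde U}$ and a standard accessibility argument supplies an honest periodic point of $g$ on $\partial\tilde U$, completing the proof.
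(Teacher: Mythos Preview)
Your argument for (2) is correct and essentially matches the paper's.

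For (1), however, the paper takes a completely different route, and your proposed route has genuine gaps. The paper does \emph{not} try to locate a periodic point on the boundary of a single disk. Instead it runs a Lefschetz--Nielsen index argument: setting $g=f^q$ and $\hat g=\hat f^q-(p_1,p_2)$, the set $K=\pi(\fix(\hat g))$ is a single Nielsen class of $g$. If $K\subset\ine(g)=\ine(f)$, compactness gives finitely many bounded $g$-invariant disks $U_1,\dots,U_k$ covering $K$, and one checks $\fix(g)\cap\ol{U}_i\subset K$. Since $g$ is nonwandering and $\bd U_i$ is fixed-point free, a Cartwright--Littlewood/prime-ends argument (cited from \cite{koro}) shows the fixed point index of $g$ on each $U_i$ equals $1$, so the Nielsen index of $K$ is $k\ge 1$. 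But $g$ is homotopic to a fixed-point-free map of $\T^2$, so by homotopy invariance of Nielsen indices this class must have index $0$, a contradiction. Prime ends enter only to compute an index, not to produce a boundary periodic point.

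Your plan, by contrast, has three problems. First, the crucial step ``irrational prime-end rotation number contradicts strict torality'' is asserted but no mechanism is given; there is no evident reason why the prime-end rotation number of a single bounded invariant disk in $\R^2$ should see the global non-annularity of $f$, and this is where the real work would lie. Second, even with rational prime-end rotation number, a periodic prime end does not in general yield a periodic point on $\bd\til U$ without further hypotheses (accessibility of the periodic prime end is not automatic). Third, and more seriously, a $g$-periodic point of period $s>1$ on $\bd\til U$ satisfies $\hat f^{qs}(\til z')=\til z'+sp$ but need not satisfy $\hat f^{q}(\til z')=\til z'+p$; under the paper's definition of ``realized by a periodic orbit'' (which requires the equation at the reduced denominator $q$), such a point does \emph{not} realize $v=p/q$. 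So even if your prime-end strategy succeeded, it would not finish the proof.
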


Finally, a simple application of Theorem \ref{th:essine} gives a characterization of the obstruction to transitivity for strictly toral maps:

\begin{corollarymain}\label{coro:trans} Let $f\colon \T^2\to \T^2$ be a nonwandering homeomorphism homotopic to the identity, and assume that $f$ is strictly toral. Then $f$ is transitive if and only if there are no bounded periodic disks.
\end{corollarymain}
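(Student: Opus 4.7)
By Theorem \ref{th:essine}, since $f$ is nonwandering and strictly toral, case~(3) of that theorem applies, so $\ine(f) = \bigsqcup_{U \in \mc{U}} U$ is a disjoint union of pairwise disjoint bounded periodic open disks and $\ess(f)$ is a nonempty fully essential connected continuum. The hypothesis ``there are no bounded periodic disks'' is then equivalent to $\mc{U} = \emptyset$, i.e.\ to $\ine(f) = \emptyset$, or to $\ess(f) = \T^2$. The plan is thus to show that $f$ is transitive if and only if $\ess(f) = \T^2$.

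For the forward direction, I would assume $f$ is transitive and, for a contradiction, that some $U \in \mc{U}$ exists, of period $p$. The orbit $V = \bigcup_{i=0}^{p-1} f^i(U) \subseteq \ine(f)$ is a nonempty $f$-invariant open set, strictly smaller than $\T^2$ since $\ess(f) \neq \emptyset$. Because topological transitivity is equivalent to every nonempty $f$-invariant open set being dense in $\T^2$, $V$ must be dense; hence $\ine(f)$ is dense and $\ess(f)$ is a nowhere dense, closed, connected, fully essential continuum. Such a set is a $1$-dimensional $f$-invariant continuum containing a basis of essential loops of $H_1(\T^2)$; combined with the fact that $f$ is isotopic to the identity, this should force some power $f^k$ to preserve an essential simple closed curve inside $\ess(f)$, making $f^k$ annular and contradicting strict torality.

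For the converse, I would assume $\mc{U} = \emptyset$ (so $\ess(f) = \T^2$) and $f$ non-transitive, which provides a nonempty $f$-invariant open proper subset $W \subsetneq \T^2$ that is not dense. By nonwandering, every connected component of $W$ is periodic, since a non-periodic component would contain a wandering open set. Pick a component $W_0$ of period $p$ and analyze its topology. If $W_0$ is simply connected, it is a topological disk, and Theorem \ref{th:bdfix} applied to $f^p$---which cannot be in its case~(2), for that would force $f$ into case~(1) of Theorem \ref{th:essine}---gives $W_0$ bounded, contradicting $\mc{U} = \emptyset$. If $W_0$ is not simply connected but all its loops are null-homotopic in $\T^2$, I would fill $W_0$ by adjoining the bounded complementary components in a suitable lift, producing an $f^p$-invariant bounded open disk, again a contradiction. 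Otherwise $W_0$ contains a loop essential in $\T^2$, and hence an $f^p$-invariant essential open annulus, making $f^p$ annular and again contradicting strict torality. The hardest step in either direction is the topological move producing an $f^k$-invariant essential simple closed curve from a nowhere-dense fully essential $f$-invariant continuum (forward), or from an $f^p$-invariant non-disk open component (converse); this relies on the planar topology of $\T^2$ together with $f_*$ being the identity on $H_1(\T^2)$.
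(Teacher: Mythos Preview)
Your proposal has genuine gaps in both directions, and the paper's proof takes quite different (and shorter) routes.

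\textbf{Forward direction.} Your plan hinges on the claim that a nowhere-dense, fully essential, $f$-invariant continuum $\ess(f)$ must contain an $f^k$-invariant essential simple closed curve. You flag this as ``the hardest step'' and gesture at planar topology and $f_*=\id$ on $H_1$, but there is no argument, and in fact there is no reason to expect such a curve to exist: fully essential continua can be extremely wild (think of boundaries of Lakes-of-Wada type), and nothing in the hypotheses forces a one-dimensional invariant sub-continuum, let alone a simple closed curve. This approach does not work. The paper instead argues as follows: if $f$ is transitive and $U$ is a bounded periodic disk with orbit $U_1,\dots,U_k$, then $\bigcup_i U_i$ is open, $f$-invariant and nonempty, hence dense, so $\bigcup_i \cl(U_i)=\T^2$. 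One then chooses a lift $\hat g$ of $f^k$ fixing one lifted component $\hat U_1$ (hence each $\hat U_i$), and observes that every $z\in\T^2$ has a lift in some $\cl(\hat U_i)$, so every $\hat g$-orbit has diameter at most $M=\max_i\diamup(U_i)$. This makes $f^k$ annular, contradicting strict torality.

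\textbf{Converse direction.} Your case analysis on a periodic component $W_0$ is incomplete: from ``$W_0$ contains an essential loop'' you jump to ``$W_0$ contains an $f^p$-invariant essential open annulus'', but an essential connected open set need not be annular---it may be fully essential, and that case is not addressed. (It can be handled, but you have not done so.) The paper avoids this altogether: it notes that the proof of external transitivity of $\ess(f)$ from Theorem~\ref{th:chaotic} goes through for any strictly toral $f$ (it only uses Corollary~\ref{coro:fully}). Thus if $\ess(f)=\T^2$, external transitivity is ordinary transitivity, and one is done in one line.
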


\subsection*{Questions}

If $\rho(\hat{f}$) has nonempty interior or is a single totally irrational vector, it is easy to conclude that $f$ is strictly toral. If $\rho(\hat{f})$ is a single vector that is neither rational nor totally irrational (for example $\{(a,0)\}$ with $a$ irrational) one can find examples which are annular (e.g. a rigid rotation) and others which are strictly toral (e.g. Furstenberg's example \cite{furstenberg}). We conjecture that strictly toral behavior is not possible when the rotation set is a single rational vector.

\begin{questionmain} Can a rational pseudo-rotation be strictly toral?
\end{questionmain}

In \cite{kt-pseudo}, a partial result is obtained answering the above question negatively with some weak additional hypotheses.

Finally, we do not know if the bound on the size of inessential periodic disks in Theorem \ref{th:essine} is uniform (independent of the period):

\begin{questionmain} Is there a strictly toral nonwandering homeomorphism $f$ homotopic to the identity such that $\sup\{\diamup(U): U \text{ is a connected component of } \ine(f) \} = \infty$?
\end{questionmain}

Let us say a few words about the proofs. In \S\ref{sec:prelim} we introduce most of the notation and terminology, and we prove some basic results. Most of the burden of this article lies in the proof of Theorem \ref{th:bdfix}. For ease of the exposition, we prove Theorems \ref{th:essine}, \ref{th:chaotic}, \ref{th:reali} and Corollary \ref{coro:trans} assuming Theorem \ref{th:bdfix}. This is done in \S\ref{sec:essential}. The proof of Theorem \ref{th:bdfix} relies strongly on the equivariant version of Brouwer's plane translation theorem due to P. Le Calvez \cite{lecalvez-equivariant} and a recent result of O. Jaulent \cite{jaulent} that allows to apply the theorem of Le Calvez in more general contexts. We state these results in \S\ref{sec:brouwer}. Many ideas present here were used in \cite{lecalvez-equivariant} and \cite{lecalvez-hamiltonian} to study Hamiltonian homeomorphisms; in particular what we call \emph{gradient-like Brouwer foliations} (see \S\ref{sec:gradient}). The novelty is that we do not assume that the maps are Hamiltonian, and not even symplectic; and we use the Brouwer foliations in combination with the non-annularity to bound invariant open sets. A key concept that allows to do that is a linking number associated to a simply connected invariant set and a fixed point, which we introduce in \S\ref{sec:linking} together with some applications regarding open invariant sets of maps which have a gradient-like Brouwer foliation. We think these results may be useful by themselves in other contexts. To use these results in the proof of Theorem \ref{th:bdfix}, which is given in \S\ref{sec:bdfix}, we first assume  that there exist arbitrarily large open connected inessential sets in $\T^2\sm \fix(f)$ and that $\fix(f)$ is not fully essential. These two facts allow us to obtain a gradient-like Brouwer foliation, and then we use the results from \S\ref{sec:linking} and some geometric arguments to arrive to a contradiction. 

\subsection*{Acknowledgements}
The authors would like to thank T. J\"ager and the anonymous referees, for the suggestions and corrections that helped improve this paper.

\section{Preliminaries}\label{sec:prelim}

\subsection{Basic notation}

As usual we identify the torus $\T^2$ with the quotient $\R^2/\Z^2$, with quotient projection $\pi\colon \R^2\to \T^2$, which is the universal covering of $\T^2$.

Usually, if $f$ is a homeomorphism of $\T^2$, a lift of $f$ to the universal covering will be usually denoted by $\hat{f}\colon \R^2\to \R^2$. If $f$ is isotopic to the identity, then $\hat{f}$ commutes with the translations $z\mapsto z+v$, $v\in \R^2$, and so $\hat{f}-\id$ is uniformly bounded.

We write $\Z^2_* = \Z^2\sm \{(0,0)\}$. Given $u,v\in \R^2$, their inner product is denoted by $\langle u, v \rangle$,  and $\proj_v\colon \R^2\to \R$ denotes the projection $\proj_v(u) = \langle u, \frac{v}{\norm{v}} \rangle$. If $v=(a,b)$, we use the notation $v^\perp = (-b,a)$

By a \emph{topological disk} we mean an open set homeomorphic to a disk, and similarly a \emph{topological annulus} is an open set homeomorphic to an annulus.

An \emph{arc} on a surface $S$ is a continuous map $\gamma \colon [0,1]\to S$. If the endpoints $\gamma(0)$ and $\gamma(1)$ coincide, we say that $\gamma$ is a loop. We denote by $[\gamma]$ the set $\gamma([0,1])$, and if $\alpha, \beta$ are two arcs such that $\alpha(1)=\beta(0)$, we write $\alpha*\beta$ for the concatenation, \ie the arc defined by $(\alpha*\beta)(t) = \alpha(2t)$ if $t\in [0,1/2]$ and $\beta(2t-1)$ if $t\in (1/2,1]$.
We also use the notation $(\gamma(t))_{t\in [0,1]}$ to describe the arc $\gamma$.

If $\gamma\subset \T^2$ is a loop, we denote by $\gamma^*\in H_1(\T^2,\Z)$ its homology class, which in the case of $\T^2$ coincides with its free homotopy class. The first homology group $H_1(\T^2, \Z)$ can be identified with $\Z^2$ by the isomorphism that maps $v\in \Z^2$ to $\gamma^*$, where $\gamma$ is any arc joining $(0,0)$ to $v$.

If $f\colon \T^2\to \T^2$ is a homeomorphism isotopic\footnote{By a theorem of Epstein \cite{epstein}, this is equivalent to saying that $f$ is homotopic to the identity} to the identity, we denote by $\mc{I}=(f_t)_{t\in [0,1]}$ an isotopy from $f_0 = \id_{\T^2}$ to $f_1=f$ (\ie $t\mapsto f_t$ is an arc in the space of self-homeomorphisms of $\T^2$ joining the identity to $f$). Any such isotopy can be lifted to an isotopy $\hat{\mc{I}}=(\hat{f}_t)_{t\in [0,1]}$ where $\hat{f}_t\colon \R^2$ to $\R^2$ is a lift of $f_t$ for each $t\in [0,1]$ and $\hat{f}_0=\id_{\R^2}$.

\subsection{Rotation set, irrotational homeomorphisms} \label{sec:rotation}
The rotation set of a lift $\hat{f}$ of a homeomorphism $f\colon \T^2\to \T^2$ homotopic to the identity is denoted by $\rho(\hat{f})$ and defined as the set of all limit points of sequences of the form 
$$\left(\frac{\hat{f}^{n_k}(x_k)-x_k}{n_k}\right)_{k\in \N},$$
where $x_k\in \R^2$, $n_k\in \N$ and $n_k\to \infty$ as $k\to \infty$. 

For an $f$-invariant Borel probability measure $\mu$, the rotation vector associated to $\mu$ is defined as $\rho_\mu(\hat{f}) = \int_{\T^2} \phi d\mu$, where $\phi\colon \T^2\to \R^2$ is the map defined by $\phi(x) = \hat{f}(\hat{x})-\hat{x}$ for some $\hat{x}\in \pi^{-1}(x)$ (which is independent of the choice).
The next proposition collects some basic results about rotation vectors. The results are contained in \cite{m-z}.

\begin{proposition}\label{pro:rotation-set} The following properties hold:
\begin{enumerate} 
\item $\rho(\hat{f})$ is compact and convex;
\item $\rho(\hat{f}^n(x)+v) =  n\rho(\hat{f})+v$ for any $n\in \Z$ and $v\in \Z^2$.
\item If $\mu$ is an $f$-ergodic Borel probability measure such that $\rho_\mu(\hat{f})=w$, then for $\mu$-almost every point $x\in \T^2$ and any $\hat{x}\in \pi^{-1}(x)$,  $$\lim_{n\to \infty} \frac{\hat{f}^n(\hat{x})-\hat{x}}{n}=w.$$
\item If $w\in \rho(\hat{f})$ is an extremal point (in the sense of convex sets) then there is an $f$-ergodic Borel probability measure $\mu$ on $\T^2$ such that $\rho_\mu(\hat{f})=w$.
\end{enumerate}
\end{proposition}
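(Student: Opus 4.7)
The plan is to reduce the four statements to facts about invariant measures on $\T^2$. Since $\hat{f}$ commutes with integer translations, the displacement $\phi(x) := \hat{f}(\hat{x})-\hat{x}$ (independent of the choice of lift $\hat{x}$) descends to a continuous bounded map $\phi\colon\T^2\to\R^2$, and telescoping yields the key identity
\[
\frac{\hat{f}^n(\hat{x})-\hat{x}}{n} \;=\; \frac{1}{n}\sum_{k=0}^{n-1}\phi(f^k(\pi(\hat{x}))).
\]
From this, boundedness of $\rho(\hat{f})$ is immediate and closedness follows from a standard diagonal-sequence argument, giving compactness in (1). Item (3) is then just Birkhoff's ergodic theorem applied to $\phi$ with respect to the ergodic measure $\mu$. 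Item (2) is a direct computation: for $v\in\Z^2$, the map $\hat{g}:=\hat{f}^n+v$ is a lift of $f^n$ and commutes with $\Z^2$-translations, so iteration yields $\hat{g}^k(x)=\hat{f}^{nk}(x)+kv$, whence $(\hat{g}^k(x)-x)/k = n\cdot(\hat{f}^{nk}(x)-x)/(nk)+v$, and passing to the limit gives $\rho(\hat{g})=n\rho(\hat{f})+v$.

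For convexity in (1) and for item (4), I would identify $\rho(\hat{f})$ with the measure-theoretic rotation set
\[
\rho_M \;:=\; \{\rho_\mu(\hat{f}) : \mu\in\mc{M}_f\},
\]
where $\mc{M}_f$ denotes the space of $f$-invariant Borel probabilities on $\T^2$. The inclusion $\rho_M\subseteq\rho(\hat{f})$ follows from (3) for ergodic $\mu$ and from ergodic decomposition in general. The non-trivial direction is $\rho(\hat{f})\subseteq\rho_M$: given $w\in\rho(\hat{f})$ with approximating data $(x_k,n_k)$, form the empirical measures
\[
\mu_k \;:=\; \frac{1}{n_k}\sum_{i=0}^{n_k-1}\delta_{f^i(\pi(x_k))}
\]
on $\T^2$. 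A weak-$*$ subsequential limit $\mu$ is $f$-invariant (Krylov--Bogolyubov), and the telescoping identity gives $\int\phi\,d\mu_k = (\hat{f}^{n_k}(x_k)-x_k)/n_k\to w$, so $\rho_\mu(\hat{f})=w$ by weak-$*$ continuity of $\mu\mapsto\int\phi\,d\mu$. Since $\rho_M$ is the image of the convex compact set $\mc{M}_f$ under the affine weak-$*$ continuous map $\mu\mapsto\int\phi\,d\mu$, it is compact and convex; combined with the equality $\rho_M=\rho(\hat{f})$ this completes (1).

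Item (4) is then an ergodic decomposition argument: given an extremal point $w$ of $\rho(\hat{f})=\rho_M$ and any invariant $\mu$ with $\rho_\mu(\hat{f})=w$, decompose $\mu=\int\nu\,dP(\nu)$ over $f$-ergodic components. The map $\nu\mapsto\rho_\nu(\hat{f})=\int\phi\,d\nu$ pushes $P$ forward to a probability on $\rho(\hat{f})$ whose barycenter is $w$, and extremality of $w$ forces $\rho_\nu=w$ for $P$-almost every $\nu$, yielding the desired ergodic measure. The only step demanding real care is the realization inclusion $\rho(\hat{f})\subseteq\rho_M$, where one must carefully match the empirical-measure computation on $\T^2$ with the lift identity on $\R^2$; everything else is standard linear and measure-theoretic bookkeeping or a direct appeal to Birkhoff.
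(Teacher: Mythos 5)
Your reductions for compactness in (1), for (2), and for (3) are essentially correct (for (2) you should still say a word about times that are not multiples of $n$ and about negative $n$, but that is routine), and your argument for (4) is fine \emph{once} (1) is available. The genuine gap is the convexity claim in (1). You propose to identify $\rho(\hat{f})$ with $\rho_M=\{\rho_\mu(\hat{f}):\mu\in\mc{M}_f\}$: the inclusion $\rho(\hat{f})\subseteq\rho_M$ via empirical measures and Krylov--Bogolyubov is correct, but the reverse inclusion is where all the content sits, and your justification for it --- ``ergodic decomposition in general'' --- is circular. For a non-ergodic $\mu$, ergodic decomposition only exhibits $\rho_\mu(\hat{f})$ as a barycenter of ergodic rotation vectors, i.e.\ as a point of $\conv(\rho(\hat{f}))$; to promote this to $\rho_\mu(\hat{f})\in\rho(\hat{f})$ you would already need $\rho(\hat{f})$ to be convex, which is exactly what is being proved. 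Indeed, what your argument actually shows is $\rho_M=\conv(\rho(\hat{f}))$, so the asserted equality $\rho(\hat{f})=\rho_M$ is literally equivalent to convexity and cannot be obtained by this soft reasoning alone.

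That the gap is not a matter of bookkeeping is shown by dimension: nothing in your argument uses that the torus is two-dimensional, yet for homeomorphisms of $\T^d$ homotopic to the identity with $d\geq 3$ the analogously defined rotation set can fail to be convex (Misiurewicz and Ziemian construct such examples), while the empirical-measure inclusion and the ergodic decomposition step go through verbatim there. Convexity of $\rho(\hat{f})$ on $\T^2$ is a genuinely planar theorem, proved in \cite{m-z} by a geometric argument specific to dimension two; the paper does not reprove any of this but simply cites \cite{m-z}, and that citation is precisely what covers the convexity statement your proposal leaves unproved. If you want a self-contained write-up, you must either reproduce the two-dimensional convexity argument of \cite{m-z} or cite it explicitly, and then your measure-theoretic framework does correctly yield the remaining items.
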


When the rotation set consists of a single vector, $f$ is said to be a \emph{pseudo-rotation}, and when this vector is an integer, $f$ is said to be \emph{irrotational}. Thus $f$ is irrotational if there is a lift $\hat{f}$ such that $\rho(\hat{f})=\{0\}$.

If $\mu$ is an ergodic measure and $\rho_\mu(\hat{f})=v$, we say that the rotation vector $v$ is \emph{realized} by $\mu$. If $v=(p_1/q,p_2/q)$ is a rational vector in reduced form (\ie with $p_1,p_2,q$ mutually coprime integers, $q>0$), then  we say that $v$ is \emph{realized by a periodic orbit} if there is $z\in \R^2$ such that 
$$\hat{f}^q(\hat{z})-\hat{z} = (p_1,p_2)$$
for any $\hat{z}\in \pi^{-1}(z)$. Note that this implies that $f^q(z)=z$ and $\lim_{n\to\infty} (\hat{f}^n(\hat{z})-\hat{z})/n=v$.

\subsection{Foliations}
By an \emph{oriented foliation with singularities} $\mc{F}$ on a surface $S$ we mean a closed set $\sing(\mc{F})$, called the set of singularities, together with an oriented topological foliation $\mc{F}'$ of $S\sm \sing(\mc{F})$. Elements of $\mc{F}'$ are oriented one-dimensional manifolds, and we call them \emph{regular leaves} of $\mc{F}$. 

By a theorem of Whitney \cite{whitney, whitney2}, any such $\mc{F}$ can be embedded in a flow; \ie $\mc{F}$ is the set of (oriented) orbits of some topological flow $\phi\colon S\times \R \to S$ (where the singularities of $\mc{F}$ coincide with the set of fixed points of $\phi$). Therefore one may define the $\alpha$-limit and $\omega$-limit of leaves of $\mc{F}$ in the usual way: if $\Gamma$ is a leaf of $\mc{F}$ and $z_0$ is a point of $\Gamma$, then
$$\omega(\Gamma)= \bigcap_{n\geq 0}\ol{\{\phi(z_0, t) : t\geq n\}},\quad \alpha(\Gamma)= \bigcap_{n\leq 0}\ol{\{\phi(z_0, t) : t\leq n\}}.$$

We say that an arc $\gamma$ is \emph{positively transverse} to an oriented foliation with singularities $\mc{F}$ if $[\gamma]$ does  not contain any singularity, and each intersection of $\gamma$ with a leaf of $\mc{F}$ is topologically transverse and ``from left to right''. More precisely: for each $t_0\in [0,1]$ there is a homeomorphism $h$ mapping a neighborhood $U$ of $\gamma(t_0)$ to an open set $V\subset \R^2$ such that $h$ maps the foliation induced by $\mc{F}$ in $U$ to a foliation by vertical lines oriented upwards, and such that the first coordinate of $t\mapsto h(\gamma(t))$ is increasing in a neighborhood of $t_0$. 

\subsection{Essential, inessential, filled, and bounded sets}\label{sec:essential-set}
We say an open subset $U$ of $\T^2$ is \emph{inessential} if every loop in $U$ is homotopically trivial in $\T^2$; otherwise, $U$ is \emph{essential}. An arbitrary set $E$ is called inessential if it has some inessential neighborhood. We also say that $E$ is \emph{fully essential} if $\T^2\sm E$ is inessential.

If $E\subset \T^2$ is open or closed, its \emph{filling} is the union of $E$ with all the inessential connected components of $\T^2\sm E$, and we denote it by $\fill(E)$. If $E=\fill(E)$ we say that $E$ is \emph{filled}. 

A connected open set $A\subset \T^2$ is \emph{annular} if $\fill(A)$ is homeomorphic to an open topological annulus. Note that $\fill(A)$ is necessarily essential in this case. The following facts are easily verified, and we omit the proofs.
\begin{proposition}\label{pro:essential-set} The following properties hold:
\begin{enumerate}
\item If $E\subset \T^2$ is fully essential and either open or closed, then exactly one connected component of $E$ is essential, and in fact it is fully essential.
\item $\fill(E)$ is inessential if so is $E$, fully essential if so is $E$, and neither one if $E$ is neither.
\item An open set $U\subset \T^2$ has an annular component if and only if $U$ is neither inessential nor fully essential.
\item An open connected set $U\subset \T^2$ is fully essential if and only if the map $\iota_U^*\colon H_1(U, \Z) \to H_1(\T^2,\Z)$ induced by the inclusion $\iota_U\colon U\to \T^2$ is surjective.
\item If $E\subset \T^2$ is an open or closed set invariant by a homeomorphism $f\colon \T^2\to\T^2$, then $\fill(E)$ is also $f$-invariant.
\item Suppose $U$ is open and connected and $\hat{U}$ is a connected component of $\pi^{-1}(U)$. Then
\begin{itemize}
\item $U$ is inessential if and only if $\hat{U}\cap (\hat{U}+v)=\emptyset$ for each $v\in \Z^2_*$;
\item $U$ is annular if and only if there is $v\in \Z^2_*$ such that $\hat{U}=\hat{U}+v$ and $\hat{U}\cap (\hat{U}+kv^\perp)=\emptyset$ for all $k\neq 0$ 
\item $U$ is fully essential if and only if $\hat{U}=\hat{U}+v$ for all $v\in \Z^2$.
\end{itemize}
\end{enumerate}
\end{proposition}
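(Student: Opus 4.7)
The plan is to establish item (6) first, as it provides the bridge between the topology of open subsets of $\T^2$ and the geometry of their lifts to $\R^2$, and then derive the rest from it. Fix a connected open $U\subset \T^2$ and a connected component $\hat U$ of $\pi^{-1}(U)$, and let $\Gamma=\{v\in \Z^2:\hat U+v=\hat U\}$. Standard path-lifting shows that $\Gamma$ equals the image of $\iota_U^{*}\colon H_1(U,\Z)\to H_1(\T^2,\Z)=\Z^2$: a loop in $U$ with homology class $v$ lifts to a path in $\hat U$ from some $\hat x$ to $\hat x+v$, and conversely. The three subcases of (6) correspond to the possible ranks of $\Gamma$: rank $0$ (inessential), $2$ (fully essential), and $1$ (annular, with $\Gamma=\gen v$ for $v\in\Z^2_*$ primitive). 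In the rank-$1$ case, passing to the intermediate cyclic cover $\R^2/\gen v$ (an open annulus) shows that the image of $\hat U$ there is essential and its filling is an open annulus, so $\fill(U)$ is itself an embedded open annulus in $\T^2$. With (6) in place, (4) is immediate, since $\iota_U^*$ is surjective iff $\Gamma=\Z^2$.

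For (1), the first step is to show that any fully essential $E$ has at least one essential connected component. Otherwise, taking an inessential open neighborhood $V$ of $\T^2\sm E$ (and, if $E$ is closed, first replacing it by a sufficiently small open thickening), we obtain $\T^2=E\cup V$ with both restriction maps $H^1(\T^2;\Q)\to H^1(E;\Q)$ and $H^1(\T^2;\Q)\to H^1(V;\Q)$ vanishing. Extending the generators $a,b\in H^1(\T^2;\Q)$ to relative classes $\tilde a\in H^1(\T^2,E)$ and $\tilde b\in H^1(\T^2,V)$, we get $\tilde a\cup\tilde b\in H^2(\T^2,\T^2)=0$, which maps to $a\cup b\ne 0\in H^2(\T^2;\Q)$, a contradiction. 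The second step is to show that the essential component $C$ is fully essential. Suppose instead $\iota_C^{*}(H_1(C;\Q))=\gen v\otimes\Q$ has rank $1$. By the algebraic intersection form on $\T^2$, any two disjoint essential loops have collinear homology, so every essential component of $E$ contributes to $\iota_E^{*}$ in direction $v$; hence $\iota_E^{*}(H_1(E;\Q))\subset\gen v\otimes\Q$. Repeating the cup-product argument with $a$ dual to $v$ (so $a|_V=0$) and $b$ dual to $v^\perp$ (so $b|_E=0$), and forming $\tilde a\in H^1(\T^2,V)$, $\tilde b\in H^1(\T^2,E)$, we again get $\tilde a\cup\tilde b=0$ and hence $a\cup b=0$ in $H^2(\T^2;\Q)$, contradicting that $(a,b)$ is a $\Q$-dual basis to $(v,v^\perp)$. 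So $C$ is fully essential; uniqueness then follows since the complement of a fully essential set is inessential.

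Items (2), (3), and (5) are routine consequences. For (2), $\fill(E)$ is obtained from $E$ by adjoining the inessential components of $\T^2\sm E$, which contribute only null-homotopic loops, so $\iota_{\fill(E)}^{*}$ and $\iota_E^{*}$ have the same image; by (4) and (1) the inessential/annular/fully essential classification is preserved. For (3), applying (6) componentwise to an open $U$ shows that a connected component is annular iff it is neither inessential nor fully essential, and one checks using (1) that $U$ globally has an annular component iff $U$ itself is neither inessential nor fully essential. For (5), $f$ is a homeomorphism permuting the connected components of $\T^2\sm E$ and preserving their topological type; hence the inessential ones are mapped among themselves, and $\fill(E)$ is $f$-invariant. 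The main obstacle I foresee is the rank-one case of (6), where one must carefully verify via the intermediate cover $\R^2/\gen v$ that $\fill(U)$ is genuinely an open annulus rather than some more complicated filled essential set; the cup-product argument in (1) also requires a brief technical step of open thickening when $E$ is closed.
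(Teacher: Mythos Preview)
The paper does not prove this proposition: it simply states that ``the following facts are easily verified, and we omit the proofs.'' So there is nothing to compare against beyond the authors' implicit claim that each item is routine.

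Your approach is sound and considerably more detailed than anything the paper offers. Organizing the proof around (6) via the stabilizer subgroup $\Gamma=\{v\in\Z^2:\hat U+v=\hat U\}$, identifying $\Gamma$ with the image of $\iota_U^*$, and reading off (4) immediately is the natural route. The cup-product argument for (1) is correct---the open thickening of a closed $E$ is exactly what is needed to make the pair $(E,V)$ excisive so that the relative cup product lands in $H^2(\T^2,E\cup V)=H^2(\T^2,\T^2)=0$---and is a standard Lusternik--Schnirelmann-type trick, if heavier than ``easily verified'' suggests. The subtlety you flag in the rank-one case of (6), namely that filling in the intermediate cover $\R^2/\langle v\rangle$ descends to $\fill(U)$ in $\T^2$, is real but is handled by checking that a component of $\T^2\setminus U$ is inessential in $\T^2$ iff each of its lifts to $\R^2/\langle v\rangle$ is inessential there (both conditions amount to the component lifting to bounded sets in $\R^2$). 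Items (2), (3), and (5) are correctly derived as consequences.
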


Given an arcwise connected set $E\subset \T^2$, let $\hat{E}$ be a connected component of $\pi^{-1}(E)$. We denote by $\diamup(E)$ the diameter of $\hat{E}$, reflecting the fact that this number is independent of the choice of the component $\hat{E}$. If $\diamup(E)<\infty$, we say that $E$ is bounded, and we say that $E$ is unbounded otherwise. If $v\in \R^2_*$, we denote by $\diamup_v(E)$ the diameter of $\proj_v(\hat{E})$, which is also independent of the choice of $\hat{E}$. Let us state a fact for future reference. Its proof is also straightforward.

\begin{proposition}\label{pro:inessential-bound} If $K\subset \T^2$ is closed and inessential, then there is $M>0$ such that $\diamup(C)\leq M$ for each connected component $C$ of $K$. 
\end{proposition}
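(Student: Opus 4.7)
The plan is to exploit compactness of $K$ to replace the given inessential neighborhood of $K$ by a neighborhood built from finitely many geometrically small open disks; since the latter neighborhood is still inessential, Proposition \ref{pro:essential-set}(6) will let us lift each of its connected components homeomorphically to $\R^2$, and the geometric control on the small disks will translate into a uniform bound on the diameter of those lifts.

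Concretely, I would first pick an open inessential neighborhood $U$ of $K$, then choose $r>0$ small enough that $r<1/4$ and every metric ball of radius $r$ centered at a point of $K$ is contained in $U$. By compactness of $K$, finitely many such balls $V_1,\ldots,V_m \subset U$ cover $K$; each $V_i$ is a topological disk whose preimage $\pi^{-1}(V_i)$ is a disjoint union of round Euclidean disks of diameter $2r$ in $\R^2$.

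Next, set $W=V_1\cup\cdots\cup V_m$. Any loop in $W$ is a loop in $U$, hence nullhomotopic in $\T^2$, so $W$ is inessential, as is each of its (finitely many) connected components $W_j$. Applying Proposition \ref{pro:essential-set}(6) to each $W_j$ yields that every connected component $\hat W_j$ of $\pi^{-1}(W_j)$ is disjoint from all its nontrivial $\Z^2$-translates; equivalently, $\pi|_{\hat W_j}\colon \hat W_j\to W_j$ is a homeomorphism. In particular, $\hat W_j$ contains exactly one lift of each $V_i\subseteq W_j$, so $\hat W_j$ is a connected union of at most $m$ round disks of diameter $2r$, giving $\diam(\hat W_j)\leq 2mr$.

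To conclude, any connected component $C$ of $K$ is connected and covered by $W$, hence lies in a single $W_j$; likewise any component $\hat C$ of $\pi^{-1}(C)$ lies inside a single $\hat W_j$, so $\diamup(C)=\diam(\hat C)\leq 2mr$, and $M:=2mr$ does the job. The delicate point is that the original inessential neighborhood $U$ can a priori have unbounded lifted components (inessentiality is a homotopical, not a geometric, constraint), so the argument really needs the refinement to a finite cover by geometrically small disks in order to upgrade componentwise boundedness to a uniform bound; this is precisely where compactness of $K$ enters.
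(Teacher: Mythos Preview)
The paper omits the proof of this proposition entirely, stating only that it ``is also straightforward,'' so there is no argument to compare against. Your proof is correct and is indeed the natural straightforward argument: the key observation, which you isolate well, is that inessentiality alone does not bound the lifted components of an open neighborhood, so one must pass via compactness to a finite cover by small balls and use Proposition~\ref{pro:essential-set}(6) to see that each component of the refined cover lifts injectively, whence a connected union of at most $m$ lifted balls of diameter $2r$ has diameter at most $2mr$.
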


\subsection{Annular homeomorphisms}

Let $f\colon \T^2\to \T^2$ be a homeomorphism isotopic to the identity. We say that $f$ is \emph{annular} (with direction $v$) if there is some lift $\hat{f}$ of $f$ to $\R^2$ and some $v\in \Z^2_*$ such that $$-M\leq \proj_{v^\perp}(\hat{f}^n(x) - x) \leq M \quad \forall x\in \R^2,\, \forall n\in \Z.$$

The following facts will be frequently used. Their proofs are elementary and will be omitted for the sake of brevity.
\begin{proposition} \label{pro:annular} The following properties hold: 
\begin{enumerate}
\item \label{pro:annular0} If an open set $A\subset \T^2$ is annular, then $\diam(P_v(A))<\infty$ for some $v\in\Z^2_*$. 
\item \label{pro:annular2} If there is an $f$-invariant annular set, then $f$ is annular.
\item \label{pro:annular3} If $f$ is annular with direction $v$, then $\rho(\hat{f}) \subset \R v$ for some lift $\hat{f}$ of $f$.
\item \label{pro:annular4} If $f$ is nonwandering and $f^n$ is non-annular for all $n\in \N$, then any essential $f$-invariant open set is fully essential.
\item \label{pro:annular5} If $f^n$ is annular for some $n\in \N$ and $f$ has a fixed point, then $f$ is annular.
\end{enumerate}
\end{proposition}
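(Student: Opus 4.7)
The five claims are elementary consequences of the lift characterizations in Proposition \ref{pro:essential-set}(6) together with basic manipulations of lifts to the universal cover. I indicate the main ideas below.

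For (1), if $A$ is annular, Proposition \ref{pro:essential-set}(6) yields a connected component $\hat A$ of $\pi^{-1}(\fill(A))$ that is $v$-periodic for some $v\in \Z^2_*$. Since $\fill(A)$ is a proper embedded open annulus in $\T^2$ whose core has homology class $v$, its lift to the cylinder $\R^2/\Z v$ is a proper open annulus, which one checks is bounded in the $v^\perp$-direction; hence $\diam(\proj_{v^\perp}(\hat A))<\infty$.

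For (2), if $A$ is $f$-invariant annular, then by Proposition \ref{pro:essential-set}(5) so is $\fill(A)$. Pick a $v$-periodic component $\hat A$ of $\pi^{-1}(\fill(A))$ as in (1); after replacing $\hat f$ by an appropriate integer translate we may assume $\hat f(\hat A)=\hat A$, and by $\Z^2$-equivariance $\hat f$ then preserves every translate $\hat A+u$ as well as every connected component of $\R^2\setminus\pi^{-1}(\fill(A))$. Using the cylinder picture from (1), each such complementary component is trapped between two consecutive lifts of $\fill(A)$ and hence has finite $v^\perp$-width. Consequently every orbit of $\hat f$ stays in a bounded $v^\perp$-strip containing its starting point, giving annularity of $f$.

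Claim (3) is immediate: the bounded $v^\perp$-deviation of $\hat f$ forces all rotation vectors to lie in $\R v$. For (4), an $f$-invariant essential but not fully essential open set $U$ has an annular component $V$ by Proposition \ref{pro:essential-set}(3); nonwandering of $f$ combined with the fact that distinct components of $U$ are disjoint forces $f^n(V)=V$ for some $n\ge 1$, whence $f^n$ is annular by (2), contradicting the hypothesis. For (5), choose a lift $\hat f$ of $f$ fixing some $\hat x\in \pi^{-1}(\fix f)$, and write a bounded-deviation lift of $f^n$ as $\hat g=\hat f^n+u$ with $u\in \Z^2$. From $\hat g^k(\hat x)=\hat x+ku$ the hypothesis forces $\proj_{v^\perp}(u)=0$, and since $\hat g^k=\hat f^{nk}+ku$ the iterates $\hat f^{nk}$ have uniformly bounded $v^\perp$-deviation. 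Interpolating via the bounded $\Z^2$-periodic displacements $\hat f^r-\id$ for $0\le r<n$ extends the bound to all iterates of $\hat f$, yielding annularity of $f$.

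The principal subtlety lies in (2): one must control not only the orbits inside some lift of $\fill(A)$ but also those in each complementary component, and the cylinder-quotient picture from (1) is used to verify that these complementary components have finite $v^\perp$-width and are individually $\hat f$-invariant.
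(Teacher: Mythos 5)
The paper omits these proofs as elementary, so the only question is whether your sketches actually close. Items (3), (4) and (5) do: in (5) the identity $\hat{g}^k=\hat{f}^{nk}+ku$ evaluated at a lifted fixed point forces $\proj_{v^\perp}(u)=0$, and the interpolation over the residues $0\le r<n$ using the boundedness of $\hat{f}^r-\id$ is exactly what is needed; in (4) the nonwandering argument producing a periodic annular component, followed by an application of (2) to $f^n$, is complete. The trouble is concentrated in (1) and, above all, in (2) -- precisely the step you yourself flag as the main subtlety.

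In (2) you assert that ``by $\Z^2$-equivariance $\hat{f}$ preserves every connected component of $\R^2\sm\pi^{-1}(\fill(A))$''. Equivariance gives only that $\hat{f}$ permutes these components while fixing each translate $\hat{A}+u$; nothing formal prevents it from interchanging complementary components ($f$ may well permute the components of $\T^2\sm\fill(A)$), so this invariance is exactly what has to be proved, and your trapping of orbits rests on it. The missing ingredient is that $\hat{f}-\id$ is uniformly bounded because $f$ is homotopic to the identity: since $\hat{f}$ commutes with translation by $v$, it descends to a lift of $f$ on the cylinder $\R^2/\Z v$, and bounded displacement forces that lift to fix each end of the cylinder, hence to preserve, for every $v^\perp$-integer translate $A_k$ of the (bounded, by (1)) image of $\hat{A}$, the component of its complement containing the upper end and the one containing the lower end. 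As the $A_k$ are pairwise disjoint, of uniformly bounded $v^\perp$-width, and march off to $\pm\infty$ in the $v^\perp$-direction, every full orbit is trapped between two of them; this yields the uniform bound on $\proj_{v^\perp}(\hat{f}^n(x)-x)$ directly, with no need to discuss individual complementary components at all (alternatively, an invariant side can be fed into Proposition \ref{pro:wall-annular}). The same separation mechanism is what really underlies (1): the transversal boundedness of $\hat{A}$ is not a matter of ``properness''; it follows from the disjointness $\hat{A}\cap(\hat{A}+kv^\perp)=\emptyset$ recorded in Proposition \ref{pro:essential-set}(6), since an essential loop contained in $\hat{A}+kv^\perp$ separates the two ends of the cylinder, so if $\proj_{v^\perp}(\hat{A})$ had infinite diameter the connected set $\hat{A}$ would have points on both sides of such a loop and would have to meet $\hat{A}+kv^\perp$. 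With these two points supplied, your outline of (2) closes.
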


\begin{proposition}\label{pro:wall-annular}
Suppose there is a lift $\hat{f}$ of $f$ to $\R^2$ and an open $\hat{f}$-invariant set $V\subset \R^2$ such that $$\proj_{v}^{-1}((-\infty, a))\subset V \subset \proj_v^{-1}((-\infty, b))$$ for some $a<b$. Then $f$ is annular.
\end{proposition}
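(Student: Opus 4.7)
The plan is to exploit that $\hat{f}$ commutes with all integer translations, so that every $\Z^2$-translate of $V$ is again $\hat{f}$-invariant. Stacking the translates $V+kv_0$ for an appropriate $v_0\in\Z^2_*$ produces a totally ordered nested family of $\hat{f}$-invariant sets, from which one can build an $\hat{f}$-invariant integer ``index'' on $\R^2$ that pins down the $\proj_v$-coordinate of each orbit to a bounded window. In the applications of this proposition the direction $v$ lies along a rational line, so one may replace it by a primitive $v\in\Z^2_*$ parallel to it, in which case the resulting bound is exactly the definition of annular in \S\ref{sec:prelim}, with annular direction $v^\perp\in\Z^2_*$.

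First I would fix $v_0\in\Z^2_*$ with $\proj_v(v_0)>b-a$ (for $v\in\Z^2_*$ a large integer multiple of $v$ will do). Since $\hat{f}$ commutes with translation by $v_0$, each $V_k:=V+kv_0$ is $\hat{f}$-invariant, and the hypothesis on $V$ transfers to
\[
\proj_v^{-1}\bigl((-\infty,a+k\proj_v(v_0))\bigr)\subset V_k\subset \proj_v^{-1}\bigl((-\infty,b+k\proj_v(v_0))\bigr).
\]
The condition $\proj_v(v_0)>b-a$ ensures strict nesting $V_k\subsetneq V_{k+1}$, together with $\bigcup_{k\in\Z} V_k=\R^2$ and $\bigcap_{k\in\Z} V_k=\emptyset$.

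Next I would define $k(x):=\min\{k\in\Z:x\in V_k\}$; this is well-defined by the previous step, and the $\hat{f}$-invariance of each $V_k$ gives $k(\hat{f}^n(x))=k(x)$ for every $n\in\Z$. Applying the half-plane inclusions above to both $x$ and $\hat{f}^n(x)$ shows that $\proj_v(x)$ and $\proj_v(\hat{f}^n(x))$ both lie in the interval $[a+(k(x)-1)\proj_v(v_0),\,b+k(x)\proj_v(v_0))$, whose length is $b-a+\proj_v(v_0)$. Subtracting therefore yields
\[
|\proj_v(\hat{f}^n(x)-x)|<b-a+\proj_v(v_0)
\]
uniformly in $x\in\R^2$ and $n\in\Z$, and with $v\in\Z^2_*$ this is precisely the statement that $f$ is annular with direction $v^\perp$. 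The proof involves no serious obstacle; the only subtle point is the careful selection of $v_0$ giving simultaneously the strict nesting and the exhaustion of $\R^2$, and (in case one wishes to state the proposition for $v\in\R^2_*$) the reduction to an integer direction, which requires $v$ to be proportional to an element of $\Z^2_*$.
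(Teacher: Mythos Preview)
The paper omits the proof of this proposition, grouping it with the elementary facts about annular homeomorphisms in \S1.5 whose proofs are left to the reader. Your argument is correct and is a natural way to establish the result: the integer translates $V+kv_0$ form an $\hat{f}$-invariant nested exhaustion of $\R^2$, so the integer-valued index $k(x)$ is $\hat{f}$-invariant and pins $\proj_v$ of each full orbit to a window of fixed length. You also correctly flag the one hypothesis that the statement leaves implicit: the direction $v$ must be proportional to an element of $\Z^2_*$ for the conclusion ``$f$ is annular'' to make sense as defined. In every application of the proposition in the paper this is the case (the relevant $v$ is always taken in $\Z^2_*$), so no harm is done.
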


\subsection{Collapsing a filled inessential set}

The following proposition says that one can collapse the connected components of a filled compact inessential invariant set to points, while preserving the dynamics outside the given set. It will be convenient later on to simplify the sets of fixed points.

\begin{proposition}\label{pro:collapse} Let $K\subset \T^2$ be a compact inessential filled set, and $f\colon \T^2\to \T^2$ a homeomorphism such that $f(K)=K$. Then there is a continuous surjection $h\colon \T^2\to \T^2$ and a homeomorphism $f'\colon \T^2 \to \T^2$ such that
\begin{itemize}
\item $h$ is homotopic to the identity;
\item $hf = f'h$;
\item $K' = h(K)$ is totally disconnected;
\item $h|_{\T^2\sm K}\colon \T^2\sm K \to \T^2\sm K'$ is a homeomorphism.
\end{itemize}
\end{proposition}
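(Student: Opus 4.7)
The approach is the classical one via R.~L.~Moore's theorem on upper semicontinuous decompositions of a surface by cellular continua. The plan is to construct an upper semicontinuous decomposition $\mathcal{D}$ of $\T^2$ whose non-trivial elements collapse the connected components of $K$ (suitably ``planar-filled'' so they are cellular), and whose remaining elements are singletons of $\T^2 \setminus K$. The quotient $\T^2 / \mathcal{D}$ is then homeomorphic to $\T^2$ and the quotient map is the desired $h$; the induced map $f'$ is obtained from $f$-invariance of $\mathcal{D}$.

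To construct $\mathcal{D}$ I would work in the universal cover. Since $K$ is inessential, each connected component $\widetilde{C}$ of $\pi^{-1}(K)$ lifts to a disjoint union of compacta in $\R^2$, each homeomorphic (through $\pi$) to a component of $K$. Let $\widehat{\widetilde{C}}$ be the union of $\widetilde{C}$ with all bounded components of $\R^2 \setminus \widetilde{C}$; this is a compact, cellular (full) continuum in $\R^2$. Two such fillings are either nested or disjoint, so the maximal fillings form a family of pairwise disjoint cellular continua in $\R^2$ whose union contains $\pi^{-1}(K)$. The hypothesis that $K$ is filled in $\T^2$ enters here to show that each maximal filling $\widehat{\widetilde C}$ is disjoint from all its $\Z^2_*$-translates and projects injectively under $\pi$: indeed, a bounded complementary component of $\widetilde{C}$ in $\R^2$ would project to an inessential complementary component of $C$ in $\T^2$, which by fillednes must lie in $K$; this prevents any collisions between translates. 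Taking the projections together with singletons $\{x\}$ for $x \in \T^2 \setminus K$ gives $\mathcal{D}$, automatically $f$-invariant since $f(K)=K$ forces $f$ to permute components (and hence maximal fillings).

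Upper semicontinuity of $\mathcal{D}$ follows from the standard fact that the components of a compact set form an upper semicontinuous decomposition of that set, together with a compactness argument showing that the operation of planar filling is semicontinuous under Hausdorff convergence in this setting. Given USC and cellularity of each decomposition element, one applies Moore's theorem (in the version for planar domains, lifted to $\R^2$ and pushed back down via the $\Z^2$-equivariance) to obtain a continuous surjection $h \colon \T^2 \to \T^2$ collapsing each element of $\mathcal{D}$ to a point, restricting to a homeomorphism on $\T^2 \setminus K$, and homotopic to the identity (the homotopy is built leaf-by-leaf using the cellularity of the elements, i.e.\ their realization as intersections of nested disks). Finally $f$-invariance of $\mathcal{D}$ gives $f' \colon \T^2 \to \T^2$ with $h f = f' h$, and $f'$ is a homeomorphism because $f$ is. The main technical hurdle is the verification of upper semicontinuity of the \emph{filled} decomposition (as opposed to the plain component decomposition): one has to rule out the possibility that a sequence of very large fillings accumulates onto a small limit filling, which is precisely where the assumption that $K$ is filled --- and hence that only essential ``holes'' accumulate near components of $K$ --- is essential.
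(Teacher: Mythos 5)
Your overall architecture (an upper semicontinuous decomposition whose nontrivial elements come from $K$, a Moore-type shrinking theorem, and $f'$ induced by invariance of the decomposition) is the same as the paper's, but there is a genuine gap at the crucial step: you never show that your ``maximal fillings'' add nothing, i.e.\ that a lifted component $\widetilde C$ of $\pi^{-1}(K)$ has \emph{no} bounded complementary components at all. Your argument for this --- that a bounded complementary component of $\widetilde C$ ``projects to an inessential complementary component of $C$ in $\T^2$, which by filledness must lie in $K$'' --- misuses the hypothesis: filledness of $K$ says that every component of $\T^2\setminus K$ is essential; it says nothing directly about components of $\T^2\setminus C$, and the projection of a bounded complementary component of $\widetilde C$ need not be a full component of $\T^2\setminus C$. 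The correct argument runs: if $B$ is a bounded component of $\R^2\setminus\widetilde C$ and some $x\in B$ had $\pi(x)\notin K$, the component $W$ of $\T^2\setminus K$ containing $\pi(x)$ would be essential, so the component of $\pi^{-1}(W)$ through $x$ is unbounded; but it is connected, disjoint from $\widetilde C$ and meets $B$, hence lies in $B$, a contradiction. Thus $B\subset\pi^{-1}(K)$, and then $\widetilde C\cup \overline B$ is a connected subset of $\pi^{-1}(K)$ projecting into the component $C$, so $B\subset\pi^{-1}(C)$; since $C$ is inessential, the component of $\pi^{-1}(C)$ containing $\widetilde C$ is $\widetilde C$ itself, forcing $B\subset\widetilde C$, absurd. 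Hence each component of $K$ is already cellular and your fillings are trivial. Without this, your construction does not prove the proposition: ``maximal fillings'' could a priori swallow points outside $K$ (killing the fourth bullet, since $h$ would no longer be injective on $\T^2\setminus K$), and maximal elements need not even exist if fillings were properly nested along an infinite chain. Once the fillings are known to be trivial, the decomposition is simply ``components of $K$ plus singletons,'' its upper semicontinuity is the standard fact that Hausdorff limits of components of a compact set lie in components, and the ``technical hurdle'' you single out (semicontinuity of the filling operation) disappears --- this is exactly the paper's one-line reduction.

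Two secondary points. First, the paper does not lift to $\R^2$ and use a planar, equivariant Moore theorem; it applies the manifold version (Daverman, Theorems 13.4 and 25.1) directly on $\T^2$: an upper semicontinuous decomposition of a manifold into cellular sets is shrinkable, and the theorem already provides a homotopy $(p_t)_{t\in[0,1]}$ from $\id_{\T^2}$ to the quotient-like map $h=p_1$. Your plan to push an equivariant planar shrinking down to the torus and to build the homotopy to the identity ``leaf-by-leaf'' is not an argument as written; if you prefer not to invoke the stronger statement, note that homotopy to the identity is in any case automatic here, since $h$ collapses only inessential continua, hence induces the identity on $\pi_1(\T^2)$, and $\T^2$ is aspherical. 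Second, the definition of $f'$ by $f'h=hf$ requires that $f$ permute the decomposition elements, which for the component decomposition is immediate from $f(K)=K$; with your filled elements you would additionally need $f$-equivariance of the filling operation, another small burden your route creates and the paper's avoids.
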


\begin{proof}
Each connected component of $K$ is filled and inessential, so it is a \emph{cellular} continuum (\ie it is an intersection of a nested sequence of closed topological disks). Let $\mc{P}$ be the partition of $\T^2$ into compact sets consisting of all connected components of $K$ together with all sets of the form $\{x\}$ with $x\in \T^2\sm K$. Then $\mc{P}$ is an \emph{upper semicontinuous decomposition}: if $P\in \mc{P}$ and $U$ is a neighborhood of $P$, then there is a smaller neighborhood $V\subset U$ of $P$ such that every element of $\mc{P}$ that intersects $V$ is contained in $U$. This is a direct consequence of the fact that the Hausdorff limit of any sequence of connected components of $K$ must be contained in a connected component of $K$.

An improved version of a theorem of Moore, found in \cite{daverman} (Theorems 13.4 and 25.1) says that for such a decomposition (an upper semicontinuous decomposition of a manifold into cellular sets) one can find a homotopy from $(p_t)_{t\in [0,1]}$ from $\id_{\T^2}$ to a closed surjection $p_1\colon \T^2\to \T^2$ such that $\mc{P} = \{p_1^{-1}(x) : x\in \T^2\}$. This implies that $h=p_1$ is homotopic to the identity, $h(K)$ is totally disconnected and $h|_{\T^2\sm K}$ is a homeomorphism onto $\T^2\sm h(K)$. The map $f'$ is well-defined by the equation $f'h=hf$ because $f$ permutes components of $K$, and it follows easily that $f'$ is a homeomorphism, completing the proof.
\end{proof}

\subsection{Other results}

Let us state for future reference two well-known results. The first one is a version of the classical Brouwer's Lemma; see for example Corollary 2.4 of \cite{fathi}.
\begin{proposition}\label{pro:brouwer-trivial} If an orientation-perserving homeomorphism $f\colon \R^2\to \R^2$ has a  nonwandering point, then $f$ has a fixed point. 
\end{proposition}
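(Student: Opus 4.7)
The plan is to argue the contrapositive: if $f$ has no fixed points, then every point is wandering. The main tool is \emph{Brouwer's lemma on translation arcs}. Recall that a \emph{translation arc} for $f$ at a point $x$ is an arc $\gamma$ joining $x$ to $f(x)$ such that $f([\gamma]) \cap [\gamma] = \{f(x)\}$. Brouwer's lemma asserts that, for an orientation-preserving homeomorphism of the plane without fixed points, every point admits a translation arc, and moreover, if $\gamma$ is a translation arc whose base point is not periodic, then $f^n([\gamma]) \cap [\gamma] = \emptyset$ for all $|n|\geq 2$.

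First I would rule out periodic orbits. If $z$ had minimal period $q\geq 2$, then concatenating translation arcs $\gamma, f(\gamma), \dots, f^{q-1}(\gamma)$ would yield a Jordan curve (after a small perturbation of the arc, to eliminate accidental self-intersections using the freeness); this curve bounds a disk which is permuted by $f$ in a controlled way, and applying the planar Brouwer fixed point theorem to $f^q$ on this disk would produce a fixed point of $f^q$, hence (by a further argument using that orientation-preserving homeomorphisms with periodic points have fixed points) a fixed point of $f$, a contradiction.

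Next, fix any point $x$ and a translation arc $\gamma$ at $x$. Using that $[\gamma]$, $f([\gamma])$, and $f^{-1}([\gamma])$ meet only at common endpoints, one thickens $\gamma$ slightly to an open topological disk $D \ni x$ such that $f(D) \cap D = \emptyset$. A direct inductive argument (essentially the core of Brouwer's lemma) using the disjointness statement for iterates of a translation arc, together with Jordan curve theorem considerations to preserve freeness after thickening, shows that $f^n(D) \cap D = \emptyset$ for all $n\in\Z_*$. Hence $x$ is wandering, and since $x$ was arbitrary this contradicts the existence of a nonwandering point.

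The main obstacle is the construction of the free disk $D$ around an arbitrary point and the verification that all its forward and backward iterates remain disjoint from $D$; this is the technical heart of Brouwer's translation theorem, and once it is in hand the conclusion is immediate. Since this is the classical statement of Brouwer's plane translation theorem, for our purposes I would simply cite \cite{fathi}, Corollary~2.4, rather than reproduce the argument in full.
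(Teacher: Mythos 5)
Your proposal lands exactly where the paper does: the paper offers no proof of this proposition at all, stating it as a classical consequence of Brouwer's translation-arc theory and citing Corollary~2.4 of \cite{fathi}, which is precisely the citation you fall back on. The preliminary sketch you give (translation arcs, ruling out periodic points, free disks with pairwise disjoint iterates) is the standard Brouwer-theoretic argument behind that reference, so the approach is essentially the same and the citation suffices.
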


The second result is due to Brown and Kister:
\begin{theorem}[\cite{brown-invariant}]\label{th:brown-kister} Suppose $S$ is a (not necessarily compact) oriented surface and $f\colon S\to S$ an orientation-preserving homeomorphism. Then each connected component of $S\sm \fix(f)$ is invariant.
\end{theorem}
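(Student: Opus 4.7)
Proof plan: I would argue by contradiction and suppose that some connected component $U$ of $S\sm\fix(f)$ satisfies $f(U)\neq U$. Since $f$ is a homeomorphism with $f(\fix(f))=\fix(f)$, it permutes the components of $S\sm\fix(f)$, so $V:=f(U)$ is a different component and in particular $U\cap V=\emptyset$. The first structural step is to observe that $\partial U=\partial V$: indeed $\partial U\subset\fix(f)$ because $\partial U$ is contained in the frontier between $U$ and its complement, and any nearby point outside $U$ that is not fixed would lie in another component of $S\sm\fix(f)$ which cannot accumulate on $U$; since $\partial U$ is pointwise fixed by $f$, one has $f(\partial U)=\partial U$, and then $f(\bar U)=\bar V$ yields $\partial V=\partial U$. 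So $U$ and $V$ are disjoint connected open sets sharing a common boundary $K:=\partial U\subset\fix(f)$, and $f$ swaps them while fixing $K$ pointwise.

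The rest of the argument is to show that this configuration is incompatible with orientation-preservation. I would localize: pick $p\in K$ and a small chart disk $D$ around $p$ such that $f(D)$ is contained in a slightly larger chart, so everything happens in a copy of $\R^2$. Both $D\cap U$ and $D\cap V$ are nonempty open sets accumulating on $p$, and for $D$ small enough $f$ sends a small neighborhood of $p$ in $U$ into $V$ and conversely. My plan is to encircle $p$ with a small simple closed curve $C\subset D$, chosen so that $C$ meets $K$ in a finite set and thus decomposes into finitely many subarcs, each contained in some component of $D\sm K$. The orientation-preserving local homeomorphism $f$ fixing $p$ must preserve the cyclic order of these subarcs along $C$. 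However, $f$ carries every subarc lying in $U$ to a subarc lying in $V$, and vice versa, so the induced combinatorial action on the cyclically ordered sequence of $U$- and $V$-labeled subarcs is a nontrivial symbol swap. Tracking endpoints in $K$ (which are fixed by $f$), this swap should be shown to force the induced circle map to reverse orientation on $C$, contradicting orientation preservation of $f$ at $p$.

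The main obstacle is that $K$ can be a very wild continuum (not a topological manifold, perhaps not even locally connected), so arranging the approximating curve $C$ to meet $K$ in finitely many points, and giving the local cyclic structure of $U$, $V$, and other components around $p$ a tractable combinatorial description, requires a delicate genericity/approximation step. A cleaner alternative I would pursue in parallel is to use prime ends: the prime end compactification of $U$ is a closed disk, and $f$ induces a homeomorphism from the prime end circle of $U$ to that of $V$; compatibility of this induced circle map with the pointwise identity on $K$ and with an orientation-preserving homeomorphism of $S$ near $K$ leads, after a careful comparison of the induced orientations on the prime end circles of $U$ and of $V$ (which carry opposite orientations as boundaries of the two complementary domains), to the same contradiction. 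Either way, the crux is converting the setwise swap $f(U)=V$ into a local orientation-reversal of $f$ at a boundary fixed point.
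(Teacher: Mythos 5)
Your reduction is correct: $f$ permutes the components of $S\sm\fix(f)$, so $V=f(U)$ is disjoint from $U$; $\partial U\subset\fix(f)$ since a non-fixed boundary point would lie in some other open component, which is disjoint from $U$ and hence cannot meet $\partial U$; and since $\partial U$ is pointwise fixed, $\partial V=f(\partial U)=\partial U$. But note that the paper does not prove this statement at all -- it is quoted from Brown--Kister -- and the whole content of that theorem is precisely the step you leave open, namely that an orientation-preserving homeomorphism cannot fix $K=\partial U$ pointwise while swapping $U$ and $V$. What you have proved is the easy bookkeeping; the crux is only sketched, and the sketch as described does not go through.

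Concretely: (i) a small circle $C$ around $p\in K$ meeting $K$ in a finite set need not exist -- $K$ is an arbitrary closed set, it can meet every circle around $p$ in a Cantor set or even have positive area, and no genericity/approximation argument produces finite intersection with an arbitrary closed set; (ii) even granting finiteness, $f$ does not map $C$ to itself, so there is no ``induced combinatorial action'' on the cyclically ordered subarcs of $C$; one must instead compare $C$ with $f(C)$ by a winding-number/degree computation around $p$, and to pin down the homotopy class of the image arcs one needs to control how $U\cap D$, $V\cap D$ and the (possibly infinitely many, interleaved) other components sharing the same boundary sit around $p$ -- for wild $K$ (Lakes-of-Wada type configurations) this is exactly the hard point, not a technicality; (iii) the prime-end alternative requires $U$ to be simply connected (false in general for a component of $S\sm\fix(f)$ on an arbitrary surface) and some relative compactness, and there is no canonical identification of the prime-end circles of $U$ and of $V$ with each other or with ``$K$ with an orientation'' (near $K$ the surface is not just $U\cup K\cup V$), so the claim that they carry opposite orientations is not meaningful as stated. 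Your guiding idea -- that the setwise swap must force a local orientation reversal at a boundary fixed point, which in the tame case follows from a winding-number computation giving local degree $-1$ -- is indeed the right heuristic, but handling a wild common boundary is the actual theorem of Brown and Kister, and it remains unproved in your proposal.
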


\section{Theorem \ref{th:essine} and applications}
\label{sec:essential}

As usual, in this section $f$ denotes a homeomorphism of $\T^2$ homotopic to the identity.

We say that $x\in \T^2$ is an \emph{essential} point if $\bigcup_{k\in \Z} f^k(U)$ is essential for each neighborhood $U$ of $x$. If $x$ is not essential, we say that $x$ is \emph{inessential}. It follows from the definition that:
\begin{itemize}
\item The set $\ine(f)$ of all inessential points is open;
\item The set $\ess(f)$ of all essential points is therefore closed;
\item Both sets are $f$-invariant.
\end{itemize}

\begin{remark} Note that $\ine(f)$ coincides with the union of all inessential open invariant sets. This does not necessarily mean that $\ine(f)$ is inessential: a trivial example would be the identity. One can think of less trivial examples where $\ine(f)$ is essential, but they all seem to have some power with a very large fixed point set (namely, a fully inessential set of periodic points). Theorem \ref{th:essine} says that this is the only possibility, under the assumption that $f$ is non-wandering and $f^n$ is non-annular for all $n\in \N$.
\end{remark}

\subsection{Proof of Theorem \ref{th:essine} (assuming Theorem \ref{th:bdfix})}

We will use Theorem \ref{th:bdfix}, the proof of which is postponed to the next sections.

First note that if $\fix(f^k)$ is essential for some $k$, then Theorem \ref{th:bdfix} applied to $f^k$ implies that either $f^k$ is annular, or $\fix(f^k)$ is fully essential and $f^k$ is irrotational. Thus to prove the theorem it suffices to consider $f$ such that
\begin{itemize}
\item $f^k$ is non-annular, and
\item $\fix(f^k)$ is inessential
\end{itemize}
for all $k\in \N$. We will show under these hypotheses that case $(3)$ holds.

\setcounter{claim}{0}
\begin{claim} Each $x\in \ine(f)$ is contained in a bounded periodic topological disk.
\end{claim}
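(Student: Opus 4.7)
The plan is to produce, for each $x\in\ine(f)$, a periodic open topological disk through $x$, and then invoke Theorem \ref{th:bdfix} applied to a suitable iterate of $f$ to bound its diameter. Throughout I use the running assumptions of this case: $f^n$ is non-annular and $\fix(f^n)$ is inessential for every $n\in\N$.

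First I extract from the definition of $\ine(f)$ an open neighborhood $U$ of $x$ such that $V:=\bigcup_{n\in\Z}f^n(U)$ is inessential; this $V$ is open and $f$-invariant. Setting $W$ to be the connected component of $V$ containing $x$, I obtain an open, connected, inessential set, and since $f$ permutes the components of $V$, each $f^n(W)$ is again a component of $V$. If these iterates were all distinct they would be pairwise disjoint, so $W$ would be a wandering open set, contradicting the nonwandering hypothesis. Thus there is a minimal $k\geq 1$ with $f^k(W)=W$.

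The main step is to show that $D:=\fill(W)$ is an open topological disk. By Proposition \ref{pro:essential-set}, $D$ is inessential and $f^k$-invariant, and connectedness is immediate since every inessential component of $\T^2\sm W$ adjoined to $W$ has boundary meeting $\partial W$. To conclude that $D$ is a disk I will verify it is simply connected: given a loop $\gamma\subset D$, $\gamma$ is inessential in $\T^2$ and hence bounds a topological disk $D'\subset\T^2$. If $D'\not\subset D$ it would meet some essential component $E$ of $\T^2\sm W$; but $\partial D'\subset D$ and $E$ is closed and connected, forcing $E\subset D'$, and then the disk $D'$ would itself be an inessential open neighborhood of $E$, contradicting the essentiality of $E$. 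Hence $D'\subset D$, so $\gamma$ is nullhomotopic in $D$, and $D$ is simply connected. This verification of simple-connectedness of $\fill(W)$ is the main technical point.

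Finally I apply Theorem \ref{th:bdfix} to $f^k$, which is nonwandering and non-annular by assumption. Since $\fix(f^k)$ is inessential it is not fully essential, so conclusion (2) of that theorem fails and conclusion (1) must hold: there is a constant $M_k$ depending only on $k$ such that every $f^k$-invariant open topological disk $U$ satisfies $\diamup(U)<M_k$. Taking $U=D$ yields $\diamup(D)<M_k$, exhibiting the desired bounded periodic topological disk containing $x$.
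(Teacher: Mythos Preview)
Your proof is correct and follows essentially the same route as the paper: take an inessential invariant neighborhood, pass to the component through $x$, use nonwandering to get periodicity, fill it to obtain a periodic disk, and apply Theorem \ref{th:bdfix} to $f^k$ (ruling out alternative (2) via the standing assumption that $\fix(f^k)$ is inessential). The only difference is that you spell out why $\fill(W)$ is simply connected, whereas the paper treats this as a known fact about fillings of connected inessential open sets; your justification is fine modulo the harmless caveat that ``$\gamma$ bounds a disk'' should be read for simple loops.
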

\begin{proof}
If $\epsilon>0$ is small enough, $U_\epsilon = \bigcup_{k\in \Z} f^k(B_\epsilon(x))$ is inessential and $f$-invariant. Let $D_\epsilon$ be the connected component of $U_\epsilon$ containing $x$. Since $f$ is nonwandering and the components of $U_\epsilon$ are permuted by $f$, there is $k\geq 1$ such that $f^k(D_\epsilon)=D_\epsilon$ and $f^n(D_\epsilon)\cap D_\epsilon=\emptyset$ if $1\leq n<k$. Then $U=\fill(D_\epsilon)$ is a periodic open disk. 
The fact that $U$ is bounded follows from Theorem \ref{th:bdfix} applied to $f^k$ (using the assumption that $\fix(f^k)$ is inessential).
\end{proof}

\begin{claim} $\ess(f)$ is fully essential.
\end{claim}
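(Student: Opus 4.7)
The goal is to show $\ine(f) = \T^2 \setminus \ess(f)$ is inessential. By Proposition \ref{pro:essential-set}(3), an open set is either inessential, fully essential, or has an annular component. Any annular component of $\ine(f)$ would be $f^k$-invariant (with $k$ its period under the $f$-action on components) and essential, hence fully essential by Proposition \ref{pro:annular}(4) applied to $f^k$ — contradicting annularity. So $\ine(f)$ is either inessential or fully essential, and it suffices to rule out the latter.

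Suppose for contradiction $\ine(f)$ is fully essential. By Proposition \ref{pro:essential-set}(1) it admits a unique fully essential component $V$, which is automatically $f$-invariant. Pick an essential loop $\gamma \subset V$. For each $x \in \gamma$, the unfilled connected component $D_\epsilon(x)$ of $\bigcup_k f^k(B_\epsilon(x))$ from Claim 1's construction is a bounded (since its filling is, by Theorem \ref{th:bdfix}) inessential $f^{k_x}$-invariant open connected set contained in $\ine(f)$, hence in $V$ by connectedness. By compactness of $\gamma$, finitely many such $D_1, \ldots, D_n$ cover it; let $N$ be the lcm of their periods. Then $W = \bigcup_i D_i \subset V$ is $f^N$-invariant and essential, so Proposition \ref{pro:annular}(4) applied to $f^N$ forces $W$ to be fully essential. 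Using Proposition \ref{pro:essential-set}(1) once more, $W$ has a unique fully essential component $V'$, which is $f^N$-invariant and realized as the union of a subfamily of the $D_i$.

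The main obstacle is to convert this structure into a contradiction. Theorem \ref{th:bdfix} applied to $f^N$ (non-annular by hypothesis, with $\fix(f^N)$ inessential) gives a uniform bound $\diamup(D_i) \leq M_N$, and by Proposition \ref{pro:essential-set}(6) the $\Z^2$-translates of each lift $\hat D_i$ are pairwise disjoint. Consequently, $\tilde V' = \pi^{-1}(V')$ is a $\Z^2$- and $\hat f^N$-invariant connected open subset of $\R^2$ tiled by the bounded, pairwise disjoint translates $\{\hat D_i + w : i = 1,\ldots,n,\; w \in \Z^2\}$, with $\hat f^N$ permuting this tile set. I expect the contradiction to arise from a geometric analysis of this tiled structure combined with Proposition \ref{pro:wall-annular}: the rigidity of the $\hat f^N$-invariant tiling, together with the uniform bound on tile diameters, should force the complement $\R^2 \setminus \tilde V'$ to contain an $\hat f^N$-invariant region confined to a strip in some direction, making $f^N$ annular and thereby contradicting our running hypothesis.
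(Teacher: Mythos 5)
Your reduction to the case where $\ine(f)$ contains an essential loop, and the covering of that loop by finitely many bounded periodic disks $D_1,\dots,D_n$ contained in $\ine(f)$, is exactly the paper's setup (the detour through the fully essential components $V$ and $V'$ is not needed: all one uses is an essential loop $\gamma\subset\ine(f)$ and the finite cover). But your proof stops precisely where the real work begins: the final paragraph (``I expect the contradiction to arise\dots'', ``should force\dots'') is a hope, not an argument, and the mechanism you sketch does not work. Since $V'$ is fully essential, $\T^2\sm V'$ is inessential, so by Proposition \ref{pro:inessential-bound} its components are uniformly bounded and $\R^2\sm\tilde V'$ is a $\Z^2$-periodic union of bounded pieces spread over the whole plane; there is no reason for it to contain an $\hat f^N$-invariant region confined to a strip, and even if it did, Proposition \ref{pro:wall-annular} requires an invariant set squeezed between two half-planes, $\proj_{v}^{-1}((-\infty,a))\subset V\subset \proj_{v}^{-1}((-\infty,b))$, which is not what ``confined to a strip'' gives. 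Two smaller inaccuracies: distinct $D_i$, $D_j$ may (and, to cover $\gamma$ connectedly, typically must) overlap, so the family $\{\hat D_i+w\}$ is not a pairwise disjoint tiling; and note that ``$V'$ fully essential and covered by bounded invariant disks'' is by itself not contradictory (the identity map realizes it), so any correct conclusion must use non-annularity quantitatively.

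The missing idea, which is the heart of the paper's argument, is to fix one lift $\hat g$ of $g=f^N$ and exploit the translation vectors of the disks: $\hat g(\hat D_i)=\hat D_i+v_i$ with $v_i\in\Z^2$, and since $\diam(\hat D_i)<\infty$, every $x\in D_i$ has rotation vector exactly $v_i$ for $\hat g$; hence $x\mapsto\rho_x$ is locally constant on the connected set $D_1\cup\dots\cup D_n\supset[\gamma]$, so all $v_i$ coincide, and after replacing $\hat g$ by another lift one may assume $v_i=0$ for all $i$. Then the $\hat g$-orbit of every point of $\pi^{-1}([\gamma])$ has diameter at most $\max_i\diamup(D_i)$. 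Lifting the essential loop $\gamma$ to an arc joining $x$ to $x+w$ with $w\in\Z^2_*$, setting $\Gamma=\bigcup_{k\in\Z}[\hat\gamma]+kw$ (which is bounded in the $w^\perp$-direction), taking the component $V_0$ of $\R^2\sm\Gamma$ containing a half-plane and saturating $V=\bigcup_{k\in\Z}\hat g^k(V_0)$, one obtains an $\hat g$-invariant open set containing one half-plane and contained in another, so Proposition \ref{pro:wall-annular} makes $f^N$ annular --- the contradiction with the standing hypothesis. Without this (or some equivalent) step, your argument does not close.
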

\begin{proof}
Suppose not. Then $\ine(f)$ is essential and open, and in particular $\ine(f)$ contains some essential loop $\gamma$. By the previous claim and by compactness, there exist finitely many simply connected periodic bounded sets $U_1,\dots, U_j$ such that $[\gamma]\subset U_1\cup \cdots \cup U_j$ (and we may assume that each $U_i$ intersects $[\gamma]$). Thus we may find $M>0$ and $m\in \N$ such that $\diamup(U_i)\leq M$ and $f^m(U_i)=U_i$ for $1\leq i\leq j$. Let $g=f^m$, and choose a lift $\hat{g}\colon \R^2\to \R^2$ of $g$. For each $i$, choose a connected component $\hat{U}_i$ of $\pi^{-1}(U_i)$. Then there is $v_i\in \Z^2$ such that $\hat{g}(\hat{U}_i)=\hat{U}_i+v_i$, and so $\hat{g}^n(\hat{U}_i) = \hat{U}_i+nv_i$ for $n\in \Z$. Since $\diam(\hat{U}_i)\leq M$, this implies that if $x\in U_i$ and $\hat{x}\in \pi^{-1}(x)$, then $\norm{\hat{g}^n(\hat{x})-\hat{x}-nv}\leq M$ (note that this does not depend on the choice of $\hat{x}$, so we may use $\hat{x}\in \hat{U}_i$). If we define $\rho_x \doteq \lim_{n\to \infty} (\hat{g}^n(\hat{x})-\hat{x})/n$, it follows that $\rho_x = v_i$ and this vector depends only on $x$ and the choice of the lift $\hat{g}$. Since this works for any $x\in U_i$, it follows that the map $U_1\cup\cdots \cup U_j\to \Z^2$ defined by $x\mapsto \rho_x$ is locally constant. Since $U_1\cup \cdots \cup U_j$ is connected (because it contains $[\gamma]$ and every $U_i$ intersects $\gamma$) it follows that $\rho_x$ is constant on that set. Therefore $v_1=v_2=\cdots = v_j$, \ie  there is $v\in \Z^2$ such that $\hat{g}(\hat{U}_i) = \hat{U}_i+v$ for $1\leq i\leq j$. Moreover, replacing $\hat{g}$ by a suitable lift of $g$ we may assume that $v=0$. 

Therefore we may assume that $\hat{g}(\hat{U}_i) = \hat{U}_i$ for $1\leq i\leq j$. Thus, if $x\in [\gamma]$ and $\hat{x}\in \pi^{-1}(x)$, then $\norm{\hat{g}^n(\hat{x})-\hat{x}} \leq \max\{\diam(U_i) : 1\leq i\leq j\} \leq M$ for each $n\in \Z$. Let us show that this implies that $g=f^m$ is annular, contradicting our hypothesis: Since $\gamma$ is an essential  loop, it lifts to $\R^2$ to a simple arc $\hat{\gamma}$ joining a point $x\in \R^2$ to $x+w$, for some $w\in \Z^2_*$. Let $\Gamma = \bigcup_{k\in \Z} [\hat{\gamma}]+kw$. Then $\proj_{w^\perp}(\Gamma)\subset [a,b]$ for some $a,b\in \R$, and since $\Gamma\subset \pi^{-1}([\gamma])$ we also have that $\norm{\smash{\hat{g}^n(x)-x}}\leq M$ for each $x\in \Gamma$. If $V_0$ is the connected component of $\R^2\sm \Gamma$ such that $\proj_{w^\perp}^{-1}((-\infty, a))\subset V_0$, then $V_0\subset \proj_{w^\perp}^{-1}((-\infty, b))$, and so 
$$\proj_{w^\perp}^{-1}((-\infty,a-M))\subset \hat{g}^k(V_0)\subset \proj_{w^\perp}^{-1}((-\infty, b+M))$$
for each $k\in \Z$.  Thus, letting $V=\bigcup_{k\in \Z} \hat{g}^k(V_0)$, we have 
$$\proj_{w^\perp}^{-1}((-\infty,a-M))\subset V \subset \proj_{w^\perp}^{-1}((-\infty,b+M)),$$
and $V$ is $\hat{g}$-invariant. By Proposition \ref{pro:wall-annular}, we conclude that $g$ is annular, which is the sought contradiction.
\end{proof}

\begin{claim} Each component of $\ine(f)$ is a periodic topological open disk.
\end{claim}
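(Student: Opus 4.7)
My plan to prove each connected component $C$ of $\ine(f)$ is a periodic topological open disk has three steps: establishing that $C$ is periodic, that $C$ coincides with its filling, and that $C$ is a topological disk.

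For periodicity, I would pick $x \in C$ and invoke the construction of Claim 1: take the $f$-invariant inessential neighborhood $U_\epsilon = \bigcup_n f^n(B_\epsilon(x))$ for small $\epsilon$, let $D_\epsilon$ be the component of $U_\epsilon$ through $x$, and set $V = \fill(D_\epsilon)$; this $V$ is a bounded periodic filled disk containing $x$. A key subfact I need is $V \subset \ine(f)$: by monotonicity of filling, $V \subset \fill(U_\epsilon)$, which is $f$-invariant by Proposition \ref{pro:essential-set}(5) and inessential by Proposition \ref{pro:essential-set}(2); thus the $f$-orbit of any small neighborhood in $V$ remains inside $\fill(U_\epsilon)$, proving inessentiality of each point. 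Since $V$ is connected and meets $C$, $V \subset C$, and $f^k(V) = V \subset C$ forces $f^k(C) = C$. Let $k$ denote the minimal period.

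To show $C = \fill(C)$, set $O = \bigcup_{i=0}^{k-1} f^i(C)$, an $f$-invariant subset of $\ine(f)$. Since $\ess(f)$ is fully essential by Claim 2, $\ine(f)$ and hence $O$ is inessential. The same reasoning as above, applied to $O$ in place of $U_\epsilon$, shows $\fill(O)$ is $f$-invariant, inessential, and contained in $\ine(f)$. Monotonicity of the filling operation then gives $\fill(C) \subset \fill(O) \subset \ine(f)$, and since $\fill(C)$ is connected and meets $C$, it must lie in the component $C$, so $\fill(C) = C$.

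For the disk structure, put $K = \T^2 \setminus C$. Being filled and inessential, $C$ has $K$ fully essential with no inessential components, so $K$ is connected by Proposition \ref{pro:essential-set}(1). Take a connected component $\hat C$ of $\pi^{-1}(C)$; since $\pi|_{\hat C}$ is a homeomorphism onto $C$, it suffices to show $\hat C$ is simply connected in $\R^2$. Decomposing $\R^2 \setminus \hat C = \pi^{-1}(K) \sqcup \bigsqcup_{v \in \Z^2_*}(\hat C + v)$, my plan is to show every component $L$ of $\pi^{-1}(K)$ is unbounded. If $L$ were bounded, then disjointness of the $\Z^2$-translates $L + v$ would make $\pi|_L$ a homeomorphism onto $\pi(L) \subset K$, forcing $\pi(L)$ to be inessential (loops in $L \subset \R^2$ are nullhomotopic, hence nullhomotopic in $\T^2$). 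If $\pi(L) = K$, then both $K$ and $\T^2 \setminus K = C$ would be inessential, contradicting $\pi_1(\T^2) = \Z^2 \ne 0$ via a van Kampen argument on inessential open neighborhoods that together cover $\T^2$. If $\pi(L) \subsetneq K$, then $K$ decomposes as a disjoint union $\bigsqcup_\alpha \pi(L_\alpha)$ of distinct compact sets indexed by $\Z^2$-orbits of components of $\pi^{-1}(K)$, contradicting connectedness of $K$. Given this, every component of $\R^2 \setminus \hat C$ contains a component of $\pi^{-1}(K)$ (the alternative, a component equal to some open $\hat C + v$, would make it clopen in $\R^2$, forcing $\hat C + v = \R^2$, which is impossible), so $\R^2 \setminus \hat C$ has no bounded components, $\hat C$ is simply connected, and $C$ is a topological open disk. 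The hardest part will be making the van Kampen step rigorous when $K$ is potentially badly-behaved: I expect this requires careful choice of a connected open neighborhood of $K$ with path-connected intersection with $C$, so that standard covering criteria apply.
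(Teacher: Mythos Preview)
Your three-step plan (periodic, then filled, then disk) is exactly the paper's, but the paper executes it in five lines while you expand considerably.

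For periodicity, the paper argues directly: $\ine(f)$ is invariant and $f$ is nonwandering, so the components of $\ine(f)$ are permuted by $f$ and each must be periodic. Your detour through the bounded periodic disk of Claim~1 works but is not needed. For $C=\fill(C)$, your argument is essentially the paper's: both show that $\fill(C)$ is an inessential $f^k$-invariant open set, hence contained in $\ine(f)$, hence equal to the component $C$.

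The real divergence is your third step. The paper simply writes ``open, inessential, filled (thus a topological disk)'' and moves on, treating this as a standard fact about torus topology. Your attempt to prove it in detail is where you create unnecessary difficulties. In your Case~1 (a bounded component $L$ of $\pi^{-1}(K)$ with $\pi(L)=K$), you have \emph{already} established that $K=\T^2\setminus C$ is fully essential; if $K$ were also inessential (as a bounded lift would force) that is an immediate contradiction, since a fully essential set cannot be inessential. No van Kampen argument is required, and the difficulty you flag at the end does not exist. Your Case~2 decomposition is not correct as written: you claim $K=\bigsqcup_\alpha \pi(L_\alpha)$ with each piece compact, but you only assumed \emph{one} component $L$ is bounded, and in any case a countable disjoint union of closed sets can certainly be connected. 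If you want a self-contained argument, it is cleaner to show that $\pi^{-1}(K)$ is connected whenever $K$ is closed, connected and fully essential (so Case~2 never occurs), or simply to take the implication ``open, connected, inessential, filled $\Rightarrow$ disk'' as the basic fact it is.
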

\begin{proof}
Since $\ine(f)$ is inessential, if $U$ is a connected component of $\ine(f)$ then $U$ is inessential and $f^k$-invariant for some $k$ (because $f$ is nonwandering and $\ine(f)$ is invariant). It follows that $\fill(U)$ is open, inessential, filled  (thus a topological disk) and $f^k$-invariant. Thus $\fill(U)\subset \ine(f)$, and since it is connected and intersects $U$, it follows that $\fill(U)=U$, proving the claim.
\end{proof}

\begin{claim} For each $k\in \N$ there is $M_k$ such that every connected component $U$ of $\ine(f)$ such that $f^k(U)=U$ satisfies $\diamup(U)<M_k$.
\end{claim}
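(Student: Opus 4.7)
The plan is to reduce the statement directly to Theorem \ref{th:bdfix} applied to $g := f^k$. The previous claim has already established that every connected component of $\ine(f)$ is an open topological disk, so any component $U$ with $f^k(U)=U$ is an $f^k$-invariant open topological disk. It therefore suffices to produce a uniform diameter bound for $f^k$-invariant open topological disks, and Theorem \ref{th:bdfix} is tailored to exactly this task.

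First I would check that the hypotheses of Theorem \ref{th:bdfix} are satisfied by $g = f^k$. The map $g$ is homotopic to the identity because $f$ is; $g$ is nonwandering since any iterate of a nonwandering homeomorphism of a compact space is again nonwandering (a neighborhood $U$ of a point returns under $f^n$ for infinitely many $n$, so in particular returns for a multiple of $k$); and $g$ is non-annular by the standing assumption at this stage of the proof of Theorem \ref{th:essine}.

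Next I would rule out alternative (2) of Theorem \ref{th:bdfix}. That alternative would require $\fix(f^k)$ to be fully essential, but by our standing assumption $\fix(f^k)$ is inessential for every $k\in \N$. Consequently alternative (1) must hold, yielding a constant $M_k$ such that every $f^k$-invariant open topological disk $U$ satisfies $\diamup(U) < M_k$. Applying this to the $f^k$-invariant topological disks arising as components of $\ine(f)$ finishes the proof.

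There is no substantial obstacle here, since the heavy lifting is done by Theorem \ref{th:bdfix}; the only thing worth checking explicitly is that the hypotheses transfer to $f^k$, which is routine. Note also that the constant $M_k$ obtained this way depends on $k$, which is why the statement of Theorem \ref{th:essine} only asserts boundedness in terms of the period and why uniformity over all periods is left as an open question.
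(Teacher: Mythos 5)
Your proposal is correct and follows exactly the paper's argument: the claim is a direct application of Theorem \ref{th:bdfix} to $f^k$, using the standing assumptions that $f^k$ is non-annular and $\fix(f^k)$ is inessential (which rules out alternative (2)), together with the previous claim that the components of $\ine(f)$ are topological disks. Your additional verification that $f^k$ is nonwandering and homotopic to the identity is routine and consistent with what the paper leaves implicit.
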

\begin{proof} This is a direct application of Theorem \ref{th:bdfix} to $f^k$, since we are under the assumption that $f^k$ is non-annular and $\fix(f^k)$ is inessential.
\end{proof}

This last claim concludes the proof of Theorem \ref{th:essine}. \qed

\begin{corollary} \label{coro:fully} If $f\colon \T^2\to \T^2$ is homotopic to the identity, nonwandering and strictly toral (\ie cases (1) and (2) of Theorem \ref{th:essine} do not hold), then
\begin{itemize}
\item for any essential point $x$, if $U$ is a neighborhood of $x$ then the set $U'=\bigcup_{n\in \Z} f^n(U)$ is connected and fully essential;
\item $\ess(f^k) = \ess(f)$ for all $k\in \N$.
\end{itemize}
\end{corollary}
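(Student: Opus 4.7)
The plan is to prove (b) first, then deduce (a) from it using the dichotomy in Proposition~\ref{pro:annular}(\ref{pro:annular4}) together with the structural decomposition from Theorem~\ref{th:essine}. Both parts hinge on knowing that $f^k$ is itself strictly toral, so I begin with that observation: every $(f^k)^n = f^{kn}$ is a power of $f$, hence non-annular by strict torality, and moreover from the opening paragraph of the proof of Theorem~\ref{th:essine} we know that $\fix(f^{kn})$ must be inessential (otherwise $f^{kn}$ would be either annular or fall into Case~(1)). Hence $f^k$ also falls into Case~(3) of Theorem~\ref{th:essine}, so $\ine(f^k)$ is inessential.

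For part (b), the inclusion $\ine(f)\subset\ine(f^k)$ is immediate, since any open inessential $f$-invariant neighborhood is $f^k$-invariant. For the reverse inclusion, given $x\in\ine(f^k)$ with an open inessential $f^k$-invariant neighborhood $W\subset \ine(f^k)$, I would form the $f$-invariant open set $V=\bigcup_{i=0}^{k-1}f^i(W)$. Each $f^i(W)$ is an open inessential $f^k$-invariant set and so lies in $\ine(f^k)$, giving $V\subset \ine(f^k)$; since the latter is inessential, so is $V$, placing $x$ in $\ine(f)$. Thus $\ine(f^k)=\ine(f)$, equivalently $\ess(f^k)=\ess(f)$.

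For part (a), fix $x\in \ess(f)$ and (after passing to a connected subneighborhood if necessary) an open connected neighborhood $U$ of $x$. The set $U' = \bigcup_{n\in\Z}f^n(U)$ is open, $f$-invariant, and essential by the definition of $\ess(f)$; applying Proposition~\ref{pro:annular}(\ref{pro:annular4}) upgrades this to full essentiality. By Proposition~\ref{pro:essential-set}(1), $U'$ then contains a unique essential connected component $W_0$, which is itself fully essential, and any other components of $U'$ are inessential. The main obstacle is to rule out these extra inessential components and conclude $U'=W_0$. The key is that $f$ permutes the components of $U'$ and that the image of a fully essential open set under a homeomorphism isotopic to the identity is again fully essential; by uniqueness this forces $f(W_0)=W_0$. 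The connected set $U$ must lie in a single component of $U'$, and if this component were some inessential $W_i$, then every $f^n(W_i)$ would also be inessential and we would have $U'=\bigcup_n f^n(U)\subset\bigcup_n f^n(W_i)$, contradicting the presence of $W_0\subset U'$. Hence $U\subset W_0$, from which $U'\subset W_0$, and $U'=W_0$ is connected and fully essential, as claimed.
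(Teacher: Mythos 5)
Your proposal is correct and takes essentially the same route as the paper: Proposition~\ref{pro:annular}(\ref{pro:annular4}) plus an argument on the connected components of $U'$ gives part (a), and part (b) follows by showing the full $f$-orbit of a small neighborhood of an $f^k$-inessential point lies in $\ine(f^k)$, which is inessential by Theorem~\ref{th:essine} applied to $f^k$. Your two small deviations are harmless: passing to a connected subneighborhood in (a) matches what the paper implicitly does (its ``components are permuted, hence all homeomorphic'' step also relies on $U$ being connected), and your uniqueness-of-the-essential-component argument via Proposition~\ref{pro:essential-set}(1) is just a repackaging of the paper's observation that two disjoint fully essential open sets cannot coexist.
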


\begin{proof} It follows from Proposition \ref{pro:annular}(\ref{pro:annular4}) that $U'$ is fully essential. Since the connected components of $U'$ are permuted by $f'$, they are all homeomorphic to each other, and since one of them is fully essential, all of them must be. But two fully essential open sets cannot be disjoint, so there is only one component, as claimed.

For the second claim note that $\ess(f^k)\subset \ess(f)$ follows directly from the definition. On the other hand if $x\notin \ess(f^k)$ then $x\in \ine(f^k)$. This means that if $U$ is a small enough neighborhood of $x$, then $U'=\bigcup_{n\in \Z} f^{kn}(U)$ is inessential, so $U'\subset \ine(f^k)$. On the other hand, if $i\in \Z$ then the $f^k$-orbit of $f^i(U)$ is $f^i(U')$, which is also inessential, so $f^i(U')\subset \ine(f^k)$. This implies that $U'':=\bigcup_{i\in \Z} f^i(U') = \bigcup_{n\in \Z} f^n(U)\subset \ine(f^k)$. But Theorem \ref{th:essine} applied to $f^k$ implies that $\ine(f^k)$ is inessential, so that $U''$ is inessential as well, and we conclude that $x\in \ine(f)$. Therefore, $\ine(f^k)\subset \ine(f)$, and so $\ess(f^k)\supset \ess(f)$, completing the proof.
\end{proof}

\subsection{Proof of Theorem \ref{th:chaotic}}
\label{sec:chaotic}

Assume that $f\colon \T^2\to \T^2$ is a nonwandering homeomorphism homotopic to the identity, $\hat{f}$ is a lift of $f$ to $\R^2$, and $\rho(\hat{f})$ has nonempty interior. This implies that $f$ is stricly toral, so that only case (3) in Theorem \ref{th:essine} holds. 

Recall that $\mc{E}(f)$ as the set of all $x\in \T^2$ such that $\rho(\hat{f},U)$ is a single vector of $\Q^2$ for some neighborhood $U$ of $x$, and $\mc{C}(f) = \T^2\sm \mc{E}(f)$. 

We want to show that
\begin{enumerate}
\item[(1)] $\mc{C}(f)=\ess(f)$, which is a fully essential continuum and $\mc{E}(f)=\ine(f)$ is a disjoint union of periodic bounded disks;
\item[(2)] $\ess(f)$ is externally transitive and sensitive on initial conditions;
\item[(3)] For any $x\in \ess(f)$ and any neighborhood $U$ of $x$, $\conv(\rho(\hat{f},U))=\rho(\hat{f})$;
\end{enumerate}

Let us begin with the following
\begin{claim*} For any $x\in \ess(f)$ and any neighborhood $U$ of $x$, $\conv(\rho(\hat{f},U))=\rho(\hat{f})$. 
\end{claim*}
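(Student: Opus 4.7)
The containment $\conv(\rho(\hat{f},U)) \subseteq \rho(\hat{f})$ is immediate, since $\rho(\hat{f},U) \subseteq \rho(\hat{f})$ and $\rho(\hat{f})$ is convex by Proposition \ref{pro:rotation-set}(1). For the reverse, I plan to show that every extremal point of $\rho(\hat{f})$ already lies in $\rho(\hat{f},U)$; taking the convex hull then gives $\rho(\hat{f}) \subseteq \conv(\rho(\hat{f},U))$, since $\rho(\hat{f})$ is the convex hull of its extremal points.

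Fix an extremal $v \in \rho(\hat{f})$. By Proposition \ref{pro:rotation-set}(4), realize $v$ as $v = \rho_\mu(\hat{f})$ for an $f$-ergodic Borel probability $\mu$. The first thing I would arrange is that $\operatorname{supp}(\mu) \subseteq \ess(f)$. When $v \notin \Q^2$, this is precisely Theorem \ref{th:reali}(2). When $v \in \Q^2$, I would apply Franks' realization theorem (\cite{franks-reali,franks-reali2}) to realize $v$ by a periodic point and then invoke Theorem \ref{th:reali}(1) to move such a point into $\ess(f)$, then replace $\mu$ by the uniform measure on that orbit.

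The heart of the argument is to show $\mu(U') = 1$, where $U' := \bigcup_{n \in \Z} f^n(U)$. By Corollary \ref{coro:fully} the set $U'$ is $f$-invariant and fully essential, so $K := \T^2 \sm U'$ is closed, $f$-invariant and inessential, and its components have uniformly bounded diameter by Proposition \ref{pro:inessential-bound}. By ergodicity $\mu(U') \in \{0,1\}$; suppose for contradiction $\mu(U')=0$, so $\mu(K)=1$ and I can pick a $\mu$-generic $y \in K \cap \ess(f)$. Let $C$ be the component of $K$ through $y$; the aim is to show $C$ is periodic under $f$. In the rational case this is immediate since $y$ itself is periodic. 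In the irrational case, if $\{f^n(C)\}_n$ were pairwise disjoint then $f$-invariance of $\mu$ combined with finiteness of the total mass would force $\mu(C) = 0$; ruling out the remaining possibility (every component has measure zero yet the union has full measure) is most cleanly done by first collapsing $\fill(K)$ to a totally disconnected invariant set via Proposition \ref{pro:collapse}, pushing $\mu$ forward along the resulting semiconjugacy (which is homotopic to the identity and therefore preserves rotation vectors), and using that an ergodic invariant measure of the induced map on a totally disconnected invariant set must be supported on a single orbit, hence have rational rotation vector. In either case $C$ is periodic, so $\fill(C)$ is a bounded periodic topological disk and by Theorem \ref{th:essine}(3) combined with Corollary \ref{coro:fully} one has $\fill(C) \subseteq \ine(f)$, contradicting $y \in \ess(f)$.

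Once $\mu(U')=1$, pick by Proposition \ref{pro:rotation-set}(3) a $\mu$-generic $y_0 \in U'$ with $\lim_n (\hat{f}^n(\hat{y}_0) - \hat{y}_0)/n = v$, and choose $n_0\in\Z$ with $f^{n_0}(y_0) \in U$. Setting $\hat{z} := \hat{f}^{n_0}(\hat{y}_0)$, so that $\pi(z) \in U$, the identity
\[
\frac{\hat{f}^n(\hat{z})-\hat{z}}{n} = \frac{\hat{f}^{n+n_0}(\hat{y}_0)-\hat{y}_0}{n} - \frac{\hat{f}^{n_0}(\hat{y}_0)-\hat{y}_0}{n}
\]
gives $(\hat{f}^n(\hat{z})-\hat{z})/n \to v$ as $n\to\infty$, so $v \in \rho(\hat{f},U)$. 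The main obstacle is the periodicity of $C$ in the irrational case: the naive counting argument is not immediately closed because $K$ can a priori have uncountably many components, but the collapse to a totally disconnected invariant set via Proposition \ref{pro:collapse} sidesteps this by reducing to a setting in which ergodic invariant measures must be concentrated on periodic orbits and therefore cannot have irrational rotation vector.
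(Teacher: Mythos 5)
Your reduction to extremal points and the final transfer step (taking a $\mu$-generic point of $U'$ and pushing it into $U$ by a fixed iterate) are fine, but the heart of your argument --- the claim $\mu(U')=1$ --- rests on two deductions that do not hold. First, from ``$C$ is a periodic component of $K=\T^2\sm U'$'' you conclude $\fill(C)\subseteq \ine(f)$ via Theorem \ref{th:essine}(3) and thereby contradict $y\in\ess(f)$. This is not a valid use of that theorem: $\fill(C)$ is a continuum, not an open invariant disk, and Theorem \ref{th:essine}(3) only describes $\ine(f)$ as a union of bounded periodic open disks; it does not say that every bounded periodic continuum lies in $\ine(f)$. Indeed $\ess(f)$ typically contains many periodic points (Theorem \ref{th:reali}(1) produces them), and each such point is itself a bounded periodic continuum, so the implication ``lies in a bounded periodic invariant continuum $\Rightarrow$ inessential'' is simply false. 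Second, in the irrational case you dispose of the remaining possibility by collapsing $\fill(K)$ (Proposition \ref{pro:collapse}) and asserting that an ergodic invariant measure carried by a totally disconnected invariant set must be supported on a single orbit, hence have rational rotation vector. That assertion is false: minimal Cantor systems (odometers, Denjoy-type minimal sets) are totally disconnected, carry ergodic measures supported on no single orbit, and can have irrational rotation vectors; nothing in your argument (nonwandering, strict torality, $K$ being the complement of a fully essential invariant open set) is invoked to exclude this. So $\mu(U')=1$ --- which is exactly the hard content of your route --- is not established.

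It is worth comparing with the paper's proof, which avoids measures for the localization step altogether and proves only what is needed. For each direction $w$ it takes a point $z$ whose orbit realizes the maximal growth rate $\sup \proj_w(\rho(\hat f))$ (existence from \cite{m-z} via an extremal point), uses Corollary \ref{coro:fully} to see that $U'=\bigcup_n f^n(U)$ is connected and fully essential, hence encloses $\pi(z)$ in a closed disk $D$ with $\bd D\subset \bigcup_{|i|\leq N} f^i(U)$, and then observes that the maximum of $\proj_w$ over $\hat f^n(\cl \hat D)$ is attained on $\hat f^n(\bd\hat D)$; since boundary points enter $U$ within $N$ iterates, this produces $v'\in\rho(\hat f,U)$ with $\proj_w(v')=\sup \proj_w(\rho(\hat f))$. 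Ranging over $w$ gives equality of the convex hulls without ever showing that an extremal point itself lies in $\rho(\hat f,U)$ --- a strictly weaker statement than the one you are trying to prove, and the reason your approach needs the unproven (and possibly delicate) fact that the realizing measure gives full mass to $U'$. (Your appeal to Theorem \ref{th:reali} and Franks' realization is not circular here, since their proofs do not use this claim, but it does import heavier machinery than the paper's self-contained topological argument.) To salvage your strategy you would need an actual proof that an ergodic measure with extremal rotation vector cannot be carried by the inessential closed invariant set $K$; neither of the two steps you propose accomplishes this.
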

\begin{proof}
 Recall from \cite{m-z} that $\rho(\hat{f})$ is convex, and if $v\in \rho(\hat{f})$ is extremal (in the sense of convexity) then there is at least one point $z\in \R^2$ such that $(\hat{f}^n(z)-z)/n\to v$ as $n\to \infty$. Let $x\in \ess(f)$, and suppose for contradiction that $\conv(\rho(\hat{f},U))\neq \rho(\hat{f})$ for some neighborhood $U$ of $x$. Since the two sets are convex and compact, and $\conv(\rho(\hat{f},U))\subset \rho(\hat{f})$, this implies that there is a direction $w\in \R^2_*$ such that $\sup P_w(\rho(\hat{f},U)) < \sup P_w(\rho(\hat{f}))$. We will show that this is not possible. Observe that there must be an extremal point $v\in \rho(\hat{f})$ such that $P_w(v) = \sup P_w(\rho(\hat{f}))$. Since $v$ is extremal, as we mentioned there exists $z\in \R^2$ such that $(\hat{f}^n(z)-z)/n\to v$ as $n\to \infty$. 

 Since $U$ is essential, $U'=\bigcup_{n\in \Z} f^n(U)$ is open, invariant, fully essential and connected (by Corollary \ref{coro:fully}). This implies that $\pi(z)$ is contained in some closed topological disk $D$ such that $\bd D\subset U'$. Since $\bd D$ is compact, there is $N\in \N$ such that $\bd D\subset \bigcup_{i=-N}^N f^i(U)$. Let $\hat{D}$ be the connected component of $\pi^{-1}(D)$ that contains $z$. Since $P_w((\hat{f}^n(z)-z)/n)\to P_w(v)$ as $n\to \infty$, if $z_n$ is chosen as a point of $\bd \hat{D}$ such that $P_w(\hat{f}^n(z_n)-z)$ is maximal then, as $\abs{P_w(z_n-z)}\leq \diam(\hat{D})$, 
 $$P_w(\hat{f}^n(z_n)-z_n)/n \geq P_w(\hat{f}^n(z)-z)/n - P_w(z_n-z)/n \xrightarrow{n\to \infty} P_w(v).$$
Let $K$ be such that $\norm{\smash{\hat{f}(y)-y}}\leq K$ for all $y\in \R^2$ (such $K$ exists because $\hat{f}-\id$ is $\Z^2$-periodic). Note that $\norm{\smash{\hat{f}^n(y)-y}}\leq nK$. Thus we may choose a subsequence $(n_i)_{i\in \N}$ such that $n_i\to \infty$ and $(\hat{f}^{n_i}(z_{n_i})-z_{n_i})/n_i$ converges to some limit $v'$ with $\norm{v'}\leq K$, and from our previous observations $P_w(v')\geq P_w(v)$. But also $P_w(v')\leq P_w(v)$, since we chose $v$ such that $P_w(v)=\sup P_w(\rho(\hat{f}))$. Therefore $P_w(v')=P_w(v)$.

 Observe that since $\pi(z_{n_i})\in \bd D$, we know that there is $k_i\in \Z$ with $-N\leq k_i\leq N$ such that $f^{k_i}(\pi(z_{n_i}))\in U$, so that if we let $x_i= \hat{f}^{k_{n_i}}(z_{n_i})$ then $x_i\in \pi^{-1}(U)$. Thus, letting $m_i=n_i-k_i$, we have 
$$\frac{\hat{f}^{m_i}(x_i) - x_i}{m_i} =\frac{n_i}{n_i-k_i}\cdot\frac{\hat{f}^{n_i}({z_{n_i}})-z_{n_i}}{n_i} - \frac{\hat{f}^{k_i}(z_{n_i})-z_{n_i}}{n_i}\xrightarrow{n\to \infty} v'$$
because $n_i\to \infty$, while $\abs{k_i}\leq N$ and $\norm{\smash{P_w(\hat{f}^{k_i}(z_{n_i})-z_{n_i})}}\leq k_iK\leq NK$ for all $n\in \N$. By definition, this means that $v'\in \rho(\hat{f},U)$. Since we already saw that $P_w(v')=P_w(v) = \sup P_w(\rho(\hat{f}))$, this contradicts our assumption that $\sup P_w(\rho(\hat{f},U)) < \sup P_w(\rho(\hat{f}))$. This completes the proof of the claim.
\end{proof}

To prove (1), observe that from the previous claim follows immediately that $\ess(f)\subset \mc{C}(f)$. Thus, we need to prove that $\mc{C}(f)\subset \ess(f)$, or which is the same that $\ine(f)\subset \mc{E}(f)$. Let $x\in \ine(f)$ then  by Theorem \ref{th:essine} the connected component $U$ of $\ine(f)$ that contains $x$ is a bounded periodic disk, so that $f^k(U)=U$ for some $k\in \N$. Thus if $\hat{U}$ is a connected component of $\pi^{-1}(U)$, there is $v\in \Z^2$ such that $\hat{f}^k(\hat{U})=\hat{U}+v$. This implies that $\norm{\smash{\hat{f}^{nk}(z)-z-nv}} \leq \diam(\hat{U})$ for all $z\in \hat{U}$, so we easily conclude that $\rho(\hat{f}^k, U)= \{v\}$, and then $\rho(\hat{f},U)=\{v/k\}\subset \Q^2$. This shows that $x\in \mc{E}(f)$, as we wanted. 

Therefore, we have proved (1) (since the claims about $\ess(f)$ hold by Theorem \ref{th:essine}). Further, the previous claim together with (1) implies (3). 

To prove (2), observe that the external sensitivity on initial conditions follows easily from (3), since it implies that if $U$ is a small ball around $x\in \mc{C}(f)$, and if $\hat{U}$ is a connected component of $\pi^{-1}(U)$, then $\diam(\hat{f}^n(U))\to \infty$ as $n\to \infty$. To prove the external transitivity, let $U_1,U_2$ be open sets in $\T^2$ intersecting $\mc{C}(f)=\ess(f)$. Then from Corollary \ref{coro:fully} $U_i'= \bigcup_{n\in \Z} f^n(U_i)$ is fully essential and invariant, for $i\in \{1,2\}$. But two fully essential sets must intersect, so there are $n_1, n_2\in \Z$ such that $f^{n_1}(U_1)\cap f^{n_2}(U_2)\neq \emptyset$, so that $\hat{f}^m(U_2)\cap U_1\neq \emptyset$, for $m=n_2-n_1\in \Z$. This completes the proof. \qed

\subsection{Proof of Theorem \ref{th:reali}} Let $f$ be homotopic to the identity, nonwandering and strictly toral, and let $\hat{f}$ be a lift of $f$ to $\R^2$. We want to prove 
\begin{enumerate}
\item[(1)] Any rational vector of $\rho(\hat{f})$ that is realized by some periodic point is also realized by a periodic point in $\ess(f)$).
\item[(2)] If $\mu$ is an ergodic Borel probability measure $\mu$ with associated rotation vector $\rho_\mu(\hat{f})\notin \Q^2$, then $\mu$ is supported on $\ess(f)$.
\end{enumerate}

We begin with (2): let $\mu$ be an $f$-ergodic Borel probability measure and $v=\rho_\mu(\hat{f})\notin \Q^2$. For $\mu$-almost every point $x\in \T^2$, we have that if $\hat{x}\in \pi^{-1}(x)$ then $(\hat{f}^n(\hat{x})-\hat{x})/n\to v$ as $n\to \infty$ (see \S\ref{sec:rotation}). Since $v\notin \Q^2$, this implies that $\pi(x)\in \ess(f)$, since otherwise by Theorem \ref{th:essine} it would belong to some periodic bounded disk $U$, and that would imply that $\rho(x)\in \Q^2$ (as in the proof of (1) in the previous section). Thus we conclude that $\mu$-almost every point is essential. Since $\ess(f)$ is closed and invariant, it follows that the support of $\mu$ is in $\ess(f)$, proving (2).

To prove (1) we will use a Lefschetz-Nielsen type index argument. Let $v=(p_1/q,p_2/q)\in \Q^2\cap \rho(\hat{f})$ with $p_1,p_2,q$ coprime. Let $g=f^q$ and $\hat{g}=\hat{f}^q-(p_1,p_2)$ (which is a lift of $g$). Recall that $z\in \T^2$ is a periodic point realizing the rotation vector $v$ (for $\hat{f}$) if for any $\hat{z}\in \pi^{-1}(z)$ one has $\hat{f}^q(\hat{z})-\hat{z} = v$. This is equivalent to saying that $\hat{g}(\hat{z}) = \hat{z}$. Therefore, to prove (1) we need to show that if $\fix(\hat{g})$ is nonempty, then $\pi(\fix(\hat{g}))$ intersects $\ess(f)$. By Corollary \ref{coro:fully} we have that $\ess(g) = \ess(f^k)=\ess(f)$. Thus we want to show that if $\fix(\hat{g})$ is nonempty then its projection to $\T^2$ contains a point of $\ess(g)$.

Suppose on the contrary that $K:=\pi(\fix(\hat{g}))\subset \ine(g)$. Since $K$ is compact, there are finitely many connected components $U_1,\dots U_k$ of $\ine(g)$ such that $K\subset U_1\cup \cdots \cup U_k$. Note that each $U_i$ is an open topological disk, and we may assume that each $U_i$ intersects $K\subset \fix(g)$, so $g(U_i)=U_i$ for each $i$. 

We claim that $\fix(g)\cap \ol{U}_i\subset K$ for each $i\in \{1,\dots, k\}$. Indeed, suppose $x\in \fix(g)\cap \ol{U}_i$, and choose a connected component $\hat{U}_i$ of $\pi^{-1}(U_i)$. If $(x_n)_{n\in \N}$ is a sequence in $U_i$ such that $x_n\to x$ as $n\to \infty$, and if $\hat{x}_n\in \pi^{-1}(x_n)\cap \hat{U}_i$, then the fact that $\diam(\hat{U}_i)<\infty$ implies that we may find a sequence $(n_i)_{n\in \N}$ with $n_i\to \infty$ such that $\hat{x}_{n_i}$ converges to some limit $\hat{x}\in \cl(\hat{U})$ as $i\to \infty$. Thus $\pi(\hat{x}) = \lim_{i\to \infty} \pi(\hat{x}_{n_i})= x$, and since $x\in \fix(g)$ it follows that $\hat{g}(\hat{x}) = \hat{x}+w$ for some $w\in \Z^2$. But $\cl(\hat{U})$ is bounded and $\hat{g}$-invariant, and since $\hat{g}^n(\hat{x}) = \hat{x}+nw$ we conclude that $w=0$. Hence $\hat{x}\in \fix(\hat{g})$, and $x\in K$, proving our claim.

In particular, since $K\subset U_1\cup\cdots \cup U_k$, we have that $\bd U_i$ contains no fixed points of $g$. Since $g$ is nonwandering, using a classic argument of Cartwright and Littlewood and the prime ends compactification of $U_i$, one may find a closed topological disk $D_i\subset U_i$ such that $\fix(g)\cap U_i\subset D_i$ and the fixed point index of $g$ in $D_i$ is $1$ (this is contained in Proposition 4.2 of \cite{koro}).
Thus we can cover $K = \fix(g)\cap (U_1 \cup \cdots \cup U_k)$ with finitely many disjoint disks $D_1,\dots D_k$ such that the fixed point index of $g$ on each $D_i$ is $1$. Note that $K$ is a Nielsen class of fixed points (that is, it consists of all points which are lifted to fixed points of a same lift $\hat{g}$ of $g$). We have just showed that the fixed point index of the Nielsen class $K$ is exactly $k\geq 1$ (one for each disk $D_i$, and there is at least one such disk). On the other hand, it is known (see, for instance, \cite{brown-nielsen}) that the fixed point index of a Nielsen class is invariant by homotopy, and since $f$ is homotopic to a map with no fixed points, the index should be $0$. Thus we arrived to a contradiction, completing the proof of (1).
\qed

\subsection{Proof of Corollary \ref{coro:trans}}
Let $f\colon \T^2\to \T^2$ be a strictly toral nonwandering homeomorphism. Suppose first that $f$ is not transitive. 
Note that the proof of external transitivity of $\ess(f)$ given in the proof of Theorem \ref{th:chaotic} works in the general case where $f$ is strictly toral (without assuming anything about the rotation set). Hence $\ess(f)$ is externally transitive. If $\ess(f)=\T^2$, this would imply that $f$ is transitive, contradicting our hypothesis. Thus $\ine(f)$ is nonempty, and the existence of a bounded periodic disk follows from Theorem \ref{th:essine}.

Now suppose that $f$ is transitive and assume for a contradiction that there is a periodic bounded disk of period $k$. Let $U_1,\dots, U_k$ be the components of the orbit of the disk, so that $f^k(U_i)=U_i$ for each $i$, and let $M$ be such that $\max_i \diamup(U_i)\leq M$. For each $i\in \{1,\dots, k\}$, let $\hat{U}_i$ be a connected component of $\pi^{-1}(U_i)$, and let $\hat{g}$ be a lift of $f^k$ such that $\hat{g}(\hat{U}_1)=\hat{U}_1$ (and therefore $\hat{g}(\hat{U}_i)=\hat{U}_i$ for each $i$). Note that $\cup_{i=1}^k\cl(U_i) = \T^2$. Thus, given $z\in \ol{U}_i$ we may choose $i$ such that $z\in \cl(U_i)$, and the fact that $U_i$ is bounded implies easily that some $\hat{z}\in \pi^{-1}(z)$ belongs to $\cl(\hat{U}_i)$. Hence, the $\hat{g}$-orbit of $\hat{z}$ has diameter bounded by $M$. This also holds for $\hat{z}+v$ for any $v\in \Z^2$, and since $z\in \T^2$ was arbitrary we conclude that all $\hat{g}$-orbits have diameter bounded by $M$. Since $\hat{g}$ lifts $f^k$, this implies that $f^k$ is annular, contradicting the fact that $f$ is strictly toral.\qed

\section{Brouwer theory and gradient-like foliations}\label{sec:brouwer}

Let $S$ be an orientable surface (not necessarily compact), and let $\mc{I}=(f_t)_{t\in [0,1]}$ be an isotopy from $f_0=\id_S$ to some homeomorphism $f_1=f$. If $\pi\colon \hat{S}\to \hat{S}$ is the universal covering of $S$, there is a natural choice of a lift $\hat{f}\colon \hat{S}\to \hat{S}$ of $f$: Letting $\hat{\mc{I}}=(\hat{f}_t)_{t\in [0,1]}$ be the lift of the isotopy $\mc{I}$ such that $\hat{f}_0=\id_{\hat{S}}$, one defines $\hat{f}=\hat{f}_1$. The lift $\hat{f}$ has the particularity that it commutes with every Deck transformation of the covering. 

A fixed point $p$ of $f$ is said to be \emph{contractible} with respect to the lift $\hat{f}$ if the loop $(f_t(p))_{t\in [0,1]}$ is homotopically trivial in $S$. This definition does not depend on the isotopy, but only on the lift $\hat{f}$. In fact, it is easy to see that the set of contractible fixed points of $f$ with respect to $\mc{I}$ coincides with $\pi(\fix(\hat{f}))$.

Given an oriented topological foliation $\mc{F}$ of $S$, one says that the isotopy $\mc{I}$ is transverse to $\mc{F}$ if for each $x\in S$, the arc $(f_t(x))_{x\in [0,1]}$ is homotopic, with fixed endpoints, to an arc that is positively transverse to $\mc{F}$ in the usual sense. In this case, it is also said that $\mc{F}$ is dynamically transverse to $\mc{I}$.

The following is one statement of the equivariant version of Brouwer's Plane Translation Theorem:
\begin{theorem}[Le Calvez \cite{lecalvez-equivariant}]\label{th:lecalvez}
If there are no contractible fixed points, then there is a foliation without singularities $\mc{F}$ which is dynamically transverse to $\mc{I}$.
\end{theorem}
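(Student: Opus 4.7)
The plan is to work on the universal cover $\hat{S}$, use classical Brouwer theory for the lift $\hat{f}$, and arrange the construction to be equivariant under deck transformations so that the foliation descends to $S$. First I would observe that the hypothesis is equivalent to $\fix(\hat{f})=\emptyset$: since the contractible fixed points of $f$ (with respect to $\mc{I}$) are precisely $\pi(\fix(\hat{f}))$, assuming none exist forces $\hat{f}$ to be a fixed-point-free orientation-preserving homeomorphism of the simply connected surface $\hat{S}$, commuting with every deck transformation.

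Next I would cover $\hat{S}$ by a locally finite, equivariant tiling by \emph{free} closed topological disks for $\hat{f}$, namely a \emph{free brick decomposition} in the sense of Sauzet: a CW decomposition into closed disks (bricks) such that the union of any two adjacent bricks $b,b'$ is free, \ie $\hat{f}(b\cup b')\cap(b\cup b')=\emptyset$. Such a decomposition exists in the fixed-point-free setting by an inductive refinement argument, and can be chosen invariant under the deck group. On the dual graph one defines a relation by setting $b\prec b'$ when $\hat{f}(b)\cap b'\neq\emptyset$; freeness plus the absence of fixed points yields a no-cycle property, so $\prec$ extends to a strict partial order that is invariant under the deck action.

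The central step is to build from this combinatorial skeleton an oriented nonsingular foliation $\hat{\mc{F}}$ of $\hat{S}$ whose leaves cross every edge in the direction prescribed by the order on the two adjacent bricks. I would construct $\hat{\mc{F}}$ first along the $1$-skeleton, then extend across each brick using Schoenflies: the boundary data coming from the oriented edges determine a nonsingular foliation of the closed disk up to isotopy, and making the extension equivariant is achieved by choosing it on a fundamental domain and propagating by the deck action. Dynamical transversality of $\hat{\mc{F}}$ to the lifted isotopy $\hat{\mc{I}}$ then follows because the brick containing $\hat{f}(x)$ lies strictly above the brick containing $x$, so any lifted isotopy arc from $x$ to $\hat{f}(x)$ can be homotoped rel endpoints to cross each separating leaf once and in the positive direction; projecting $\hat{\mc{F}}$ by $\pi$ gives the desired $\mc{F}$ on $S$. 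The hard part, and where Le Calvez's original argument is most delicate, will be the passage from the combinatorial/order-theoretic data to a continuous foliation that glues consistently across bricks \emph{and} is equivariant: a naive local construction inside individual bricks can easily produce incompatibilities at shared edges, and the equivariance constraint removes the freedom to patch such incompatibilities by ad hoc local modifications.
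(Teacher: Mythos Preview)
The paper does not give a proof of this statement: it is quoted as a theorem of Le Calvez \cite{lecalvez-equivariant} and used as a black box, so there is no ``paper's own proof'' to compare against. Your outline is in the spirit of Le Calvez's original argument --- one does indeed reduce to a fixed-point-free lift $\hat{f}$ on the universal cover, build an equivariant maximal free brick decomposition \`a la Sauzet, extract a partial order on bricks via Franks' lemma (no periodic chains), and then manufacture a transverse foliation compatible with that order and with the deck action --- so as a strategic sketch it is pointed in the right direction.

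That said, as you yourself flag, the passage from the discrete order on bricks to an honest $C^0$ oriented foliation on $\hat{S}$ that is simultaneously (i) dynamically transverse to $\hat{\mc{I}}$ and (ii) equivariant under the full deck group is where essentially all the work lies, and your paragraph does not supply it. In Le Calvez's proof this step is not a Schoenflies patch on individual bricks but an inductive construction of a decreasing sequence of equivariant ``attractors'' (unions of bricks closed under the order) whose boundaries are the leaves; getting these boundaries to be genuine $1$-manifolds, and getting them to laminate the whole surface, requires the brick decomposition to be \emph{maximal} and uses delicate properties of the order that are not visible from the coarse description you give. So your proposal is a reasonable high-level roadmap of the cited theorem, but it is not yet a proof, and in any case the present paper makes no attempt to reproduce one.
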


Since the set of contractible fixed points is usually nonempty, one needs some additional modifications before using the previous theorem. This is done using a recent result of O. Jaulent.

\begin{theorem}[Jaulent, \cite{jaulent}] \label{th:jaulent} Given an isotopy $\mc{I}=(f_t)_{t\in [0,1]}$ from the identity to a homeomorphism $f\colon S\to S$, there exists a closed set $X\subset \fix(f)$ and an isotopy $\mc{I}' = (f_t')_{t\in [0,1]}$ from $\id_{S\sm X}$ to $f|_{S\sm X}\colon S\sm X\to S\sm X$ such that
\begin{enumerate}
\item[(1)] for each $z\in S\sm X$, the arc $(f_t'(z))_{t\in [0,1]}$ is homotopic with fixed endpoints (in $S$) to $(f_t(z))_{t\in [0,1]}$;
\item[(2)] there are no contractible fixed points for $f|_{S\sm X}$ with respect to $\mc{I}'$.
\end{enumerate}
\end{theorem}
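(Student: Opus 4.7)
The plan is to prove Theorem \ref{th:jaulent} by a Zorn's Lemma argument. Let $\mc{P}$ denote the set of pairs $(X, [\mc{I}'])$ where $X\subset \fix(f)$ is closed and $[\mc{I}']$ is a homotopy class of isotopies $\mc{I}'=(f_t')_{t\in[0,1]}$ from $\id_{S\sm X}$ to $f|_{S\sm X}$ satisfying condition (1) of the statement, namely that for every $z\in S\sm X$ the trajectory $(f_t'(z))_{t\in[0,1]}$ is homotopic in $S$, with fixed endpoints, to $(f_t(z))_{t\in[0,1]}$. Order $\mc{P}$ by declaring $(X_1,[\mc{I}_1'])\preceq (X_2,[\mc{I}_2'])$ if $X_1\subseteq X_2$ and the restriction of (a representative of) $\mc{I}_1'$ to $S\sm X_2$ represents $[\mc{I}_2']$ in the above sense. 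The initial pair $(\emptyset,[\mc{I}])$ shows $\mc{P}$ is nonempty.

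First I would verify the Zorn hypothesis: given a chain $\{(X_\alpha,[\mc{I}_\alpha'])\}$, I would take $X=\ol{\bigcup_\alpha X_\alpha}$ and construct a limit isotopy on $S\sm X$. On any compact $K\subset S\sm X$, there exists some $\alpha_0$ with $K\subset S\sm X_{\alpha_0}$, and for $\alpha\succeq \alpha_0$ the isotopies $\mc{I}_\alpha'|_K$ all represent the same homotopy class relative to trajectories; this coherence lets one define $[\mc{I}']$ globally on $S\sm X$, giving an upper bound in $\mc{P}$. (The closedness of $X$ uses that $\fix(f)$ is closed and that endpoints of trajectories are preserved through the limit.)

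Next, let $(X,[\mc{I}'])$ be a maximal element and suppose for contradiction that $f|_{S\sm X}$ admits a contractible fixed point $p\in \fix(f)\sm X$ with respect to $\mc{I}'$; that is, the loop $\gamma(t)=f_t'(p)$ is null-homotopic in $S\sm X$. The aim is to build a new pair $(X\cup\{p\},[\mc{I}''])\in \mc{P}$ strictly above $(X,[\mc{I}'])$, contradicting maximality. The strategy is to lift $\mc{I}'|_{S\sm X}$ to the universal cover $\widetilde{S\sm X}$: since $\gamma$ is null-homotopic in $S\sm X$, the lift $\widetilde f'$ of $f$ determined by $\mc{I}'$ fixes every lift of $p$, and one can push $\widetilde f'$ off a small disk around one chosen lift $\widetilde p$ by an equivariant isotopy fixing trajectories' homotopy classes; the Alexander trick inside a disk not meeting other lifts of $p$ lets one construct the new isotopy, which descends to an isotopy $\mc{I}''$ on $S\sm(X\cup\{p\})$ with the required trajectory-homotopy compatibility with $\mc{I}'$ and hence with $\mc{I}$.

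The hard part is the last construction: one must deform the isotopy locally around $p$ so as to remove $p$ from its support, while keeping each individual trajectory in its original free homotopy class relative to endpoints, for every point $z\neq p$ as well. The null-homotopy of $\gamma$ in $S\sm X$ is exactly what allows this, but implementing it as an explicit isotopy modification (rather than merely a homotopy of maps) requires care, and is the technical core of Jaulent's argument. A secondary delicate point is showing that the supremum $X$ of a chain really is closed in $\fix(f)$ and that the chain of homotopy classes passes to the limit; this relies on the fact that trajectories of $\mc{I}$ have bounded diameter on compacta, so the homotopical data stabilizes locally.
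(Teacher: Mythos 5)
This statement is not proved in the paper at all: it is quoted as an external result of Jaulent (\cite{jaulent}), so the only question is whether your sketch could stand on its own. It cannot as written: it reproduces the known overall strategy (a Zorn's Lemma argument over pairs $(X,\mc{I}')$, plus an induction step that absorbs one contractible fixed point at a time), but both places where the real difficulty lives are deferred rather than argued, and one piece of the setup is not even well defined. On the setup: you compare $(X_1,[\mc{I}_1'])\preceq(X_2,[\mc{I}_2'])$ by ``restricting $\mc{I}_1'$ to $S\sm X_2$'', but the time-$t$ maps of $\mc{I}_1'$ need not fix $X_2$ (points of $X_2$ are fixed by $f$, not by the intermediate homeomorphisms $f'_{1,t}$), so this restriction is not an isotopy of $S\sm X_2$; the order has to be phrased in terms of trajectories, e.g.\ requiring that for $z\in S\sm X_2$ the $\mc{I}_2'$-trajectory of $z$ be homotopic with fixed endpoints in $S\sm X_1$ to the $\mc{I}_1'$-trajectory. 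This is fixable, but it matters because it shows the objects being compared are homotopy-theoretic, which feeds directly into the first genuine gap.

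The first gap is the chain step. Along a chain you only have a coherent family of \emph{homotopy classes} of trajectories, stabilizing on compact subsets of $S\sm X$ with $X=\ol{\bigcup_\alpha X_\alpha}$; to produce an upper bound in $\mc{P}$ you must exhibit an actual continuous isotopy $(f'_t)$ on $S\sm X$ realizing all of these classes simultaneously. There is no equicontinuity, so no limit of representatives exists, and gluing representatives chosen on an exhaustion by compacta is precisely the technical heart of Jaulent's paper (indeed, in Jaulent's original formulation one obtains in general a weaker limiting object, and upgrading it to a genuine isotopy on $S\sm X$ is a substantial point); the sentence ``this coherence lets one define $[\mc{I}']$ globally'' is an assertion, not an argument. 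The second gap is the maximality step. What is needed there is a deformation of the isotopy $\mc{I}'$ itself, through isotopies from $\id$ to $f|_{S\sm X}$, making the trajectory of $p$ constant, so that it restricts to $S\sm(X\cup\{p\})$; the null-homotopy of the loop $(f'_t(p))_{t\in[0,1]}$ in $S\sm X$ is the right hypothesis, but converting it into such a deformation requires nontrivial input on the topology of the identity component of the homeomorphism group of $S\sm X$ (via the evaluation fibration and Hamstrom-type vanishing of its fundamental group), not ``the Alexander trick in a small disk'': pushing a lift $\til{f}'$ off a disk around a lift of $p$ would modify the map $f$, which must stay untouched — only the isotopy may change. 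Since both of these steps are exactly what you describe as ``the technical core of Jaulent's argument,'' the proposal is a reasonable plan but not a proof.
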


\begin{remark}\label{rem:jaulent-1} Due to the latter property, Theorem \ref{th:lecalvez} implies that there is a foliation $\mc{F}_X$ on $S\sm X$ that is dynamically transverse to $\mc{I}'$. 
\end{remark}

\begin{remark}\label{rem:jaulent-2} If $X$ is totally disconnected, one can extend the isotopy $\mc{I}'$ to an isotopy on $S$ that fixes every element of $X$; that is, $f_t'(x)=x$ for each $x\in X$ and $t\in [0,1]$. Similarly, the foliation $\mc{F}_X$ can be extended to an oriented foliation with singularities $\mc{F}$ of $S$, where the set of singularities $\sing(\mc{F})$ coincides with $X$. Moreover, after these extensions, if we consider the respective lifts $\hat{\mc{I}}=(\hat{f}_t)_{t\in [0,1]}$ and $\hat{\mc{I}}'=(\hat{f}_t')_{t\in [0,1]}$ of $\mc{I}$ and $\mc{I}'$ such that $\hat{f}_0=\hat{f}_0'=\id_{\hat{S}}$, then $\hat{f}_1'=\hat{f}_1$. This follows from the fact that if $z\in S\sm X$, then $(f_t'(z))_{t\in [0,1]}$ is homotopic with fixed endpoints in $S$ to $(f_t(z))_{t\in [0,1]}$, so that the lifts of these paths with a common base point $\hat{z}$ must have the same endpoint as well. 
\end{remark}

\begin{remark}\label{rem:jaulent-3} In the previous remark, if $\hat{\mc{F}}$ is the lift of the extended foliation $\mc{F}$ (with singularities in $\hat{X}=\pi^{-1}(X)$), then $\hat{\mc{F}}|_{\hat{S}\sm \pi^{-1}(X)}$ is dynamically transverse to $\hat{\mc{I}}'$; \ie for any $\hat{z}\in \hat{S}\sm \hat{X}$ the path $(\hat{f}'_t(\hat{z}))_{t\in [0,1]}$ is homotopic with fixed endpoints in $\hat{S}\sm \hat{X}$ to an arc $\hat{\gamma}$ positively transverse to $\hat{\mc{F}}$. In fact we know that, if $z=\pi(\hat{z})$, then $(f'_t(z))_{t\in [0,1]}$ is homotopic with fixed endpoints in $S\sm X$ to an arc $\gamma$ positively transverse to $\mc{F}$. The homotopy between $(f'_t(z))_{t\in [0,1]}$ and $\gamma$ can be lifted to a homotopy (with fixed endpoints, in $\hat{S}\sm \hat{X}$) between $(\hat{f}'_t(z))_{t\in [0,1]}$ and the lift $\hat{\gamma}$ of $\gamma$ with base point $\hat{z}$. One easily verifies that $\hat{\gamma}$ is positively transverse to $\hat{\mc{F}}$.
\end{remark}

\subsection{Positively transverse arcs}

Let us state some general properties of dynamically transverse foliations that will be used in the next sections. This proposition is analogous to part of Proposition 8.2 of \cite{lecalvez-equivariant}, with small modifications.
\begin{proposition}\label{pro:pta} Suppose $S$ is an orientable surface, $\mc{I}=(f_t)_{t\in [0,1]}$ an isotopy from the identity to a homeomorphism $f=f_1$ without contractible fixed points, and $\mc{F}$ a dynamically transverse foliation as given by Theorem \ref{th:lecalvez}. The following properties hold:
\begin{enumerate}
\item [(1)] For any $n\in \N$ and $x\in S$, there is a positively transverse arc joining $x$ to $f^n(x)$.
\item [(2)] If $x$ and $y$ can be joined by a positively transverse arc, then there are neighborhoods $V$ of $x$ and $V'$ of $y$ such that every point of $V$ can be joined to every point of $V'$ by a positively transverse arc;
\item [(3)] If $x$ is nonwandering, then there is a neighborhood of $V$ of $x$ such that every point of $V$ can be joined to every other point of $V$ by a positively transverse arc.
\item [(4)] If $K\subset S$ is a connected set of nonwandering points, then any point of $K$ can be joined to each point of $K$ by a positively transverse arc.
\end{enumerate}
\end{proposition}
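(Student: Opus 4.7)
My plan is to establish the four parts in order, each relying on the ones before.

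For (1), dynamical transversality provides, for every $z\in S$, a positively transverse arc $\alpha_z$ from $z$ to $f(z)$ homotopic with fixed endpoints to $(f_t(z))_{t\in[0,1]}$. I would form the concatenation $\alpha_x * \alpha_{f(x)} * \cdots * \alpha_{f^{n-1}(x)}$ joining $x$ to $f^n(x)$. Since $f$ has no contractible fixed points, $\mc{F}$ has no singularities (Theorem \ref{th:lecalvez}), so each junction $f^i(x)$ is a regular point of $\mc{F}$; in a local flow box there, the first coordinate of the concatenation is continuous and locally increasing through the junction, preserving positive transversality on all of $[0,1]$. For (2), given a positively transverse arc $\gamma$ from $x$ to $y$, I would work in flow boxes at both endpoints. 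Identifying a flow box at $x$ with $(-1,1)^2$ where leaves are vertical and oriented upward, $\gamma$ has strictly increasing first coordinate near $x$; for any $x'$ in a sufficiently small neighborhood of $x$ with first coordinate less than that of some fixed $\gamma(t_0)$ interior to the box, a short positively transverse arc in the box connects $x'$ to $\gamma(t_0)$, and splicing with $\gamma|_{[t_0,1]}$ yields a positively transverse arc from $x'$ to $y$. The symmetric construction at $y$ produces the neighborhood $V'$.

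For (3), let $x$ be nonwandering and fix a flow box $W$ about $x$ with coordinates as above. Nonwandering supplies $y\in W$ and $n\in \N$ with $f^n(y)\in W$; (1) gives a positively transverse arc $\gamma$ from $y$ to $f^n(y)$, and (2) supplies neighborhoods $V_y \ni y$ and $V_z \ni f^n(y)$ inside $W$ such that any point of $V_y$ joins to any point of $V_z$ by a positively transverse arc. Let $\alpha^+$ denote the maximum first-coordinate over $V_y$ and $\beta^-$ the minimum over $V_z$. I would take $V = W \cap \{\beta^- < a < \alpha^+\}$: for any $p, q \in V$, the first coordinate $a_p$ is less than $\alpha^+$ (so a positively transverse arc in $W$ goes rightward from $p$ to some point of $V_y$) and $a_q$ exceeds $\beta^-$ (so one goes rightward from some point of $V_z$ to $q$); concatenating $p \to V_y \to V_z \to q$ then yields the desired arc. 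The subtle point is ensuring $V$ contains $x$, i.e.\ $\beta^- < 0 < \alpha^+$; this is arranged by choosing $W$ small enough relative to the transverse widths of $V_y$ and $V_z$.

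For (4), I would introduce the relation $p \approx q$ on $K$ requiring positively transverse arcs in both directions between $p$ and $q$; this is symmetric by definition and transitive by concatenation as in (1). Every $p \in K$ is nonwandering, so by (3) there is an $S$-neighborhood $V_p$ where all pairs are bidirectionally joined, and hence $V_p \cap K$ lies in the $\approx$-class of $p$. These classes are therefore open in $K$, and connectedness of $K$ forces a single equivalence class. The main obstacle I anticipate lies in (3): guaranteeing that the strip $V$ actually contains $x$, which requires the neighborhoods $V_y, V_z$ (whose transverse widths are fixed by the specific $\gamma$) to straddle the vertical leaf through $x$ in flow box coordinates. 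This requires a careful interplay between the choice of $W$ and the application of nonwandering, and is the technical heart of the argument.
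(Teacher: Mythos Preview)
Your arguments for (1), (2), and (4) are essentially correct and match the paper's approach; the paper likewise concatenates transverse arcs for (1), uses flow boxes for (2), and for (4) runs a connectedness argument (fixing one point $x\in K$ and showing the set of points reachable from $x$ is open and closed in $K$, rather than setting up an equivalence relation, but this is cosmetic).

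The gap is in (3), and it is precisely the one you flag: you need $\beta^- < 0 < \alpha^+$, but your proposed fix --- ``choose $W$ small enough relative to the transverse widths of $V_y$ and $V_z$'' --- is circular. The neighborhoods $V_y$ and $V_z$ come from part (2) applied to the arc $\gamma$ joining $y$ to $f^n(y)$, and this arc is only determined \emph{after} you choose $W$ and invoke nonwandering. In flow-box coordinates $V_z$ extends far to the right of $f^n(y)$ but only slightly to its left (by whatever amount $\gamma$ lingers in the box before terminating), and you have no control over where $f^n(y)$ lands: if it happens to have positive first coordinate then $\beta^-$ may well be positive and $x\notin V$. Shrinking $W$ does not help, since that changes $y$, $n$, and $\gamma$, hence $V_y$ and $V_z$, all over again.

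The paper sidesteps this circularity by a different order of operations. It first reduces (3) to finding a positively transverse loop based at $x$. To do that, it applies (1) and (2) to the arcs from $f^{-1}(x)$ to $x$ and from $x$ to $f(x)$, obtaining \emph{fixed} neighborhoods $V\ni f^{-1}(x)$, $V'\ni x$, $V''\ni f(x)$ with the joining property. Only then does it invoke nonwandering, inside $V'$, to find $y\in V'$ close enough to $x$ that $f(y)\in V''$ and $f^{n}(y)\in V'\cap f(V)$ for some large $n$. Concatenating a transverse arc from $x$ to $f(y)$ (via $V'\to V''$), then $f(y)\to f^{n-1}(y)$ (via (1)), then $f^{n-1}(y)\to x$ (via $V\to V'$) gives the desired loop. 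The crucial point is that $V,V',V''$ are anchored at $x$ and its images and are fixed \emph{before} any appeal to recurrence, so no circularity arises.
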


\begin{remark} Note that this proposition remains true in the context of Remarks \ref{rem:jaulent-2} and \ref{rem:jaulent-3} if one works in $S\sm X$ or $\hat{S}\sm \hat{X}$ with the corresponding foliations.
\end{remark}

\begin{proof} The first claim is a consequence of the transversality of the foliation: we know that any $z\in S$ can be joined to $f(z)$ by some positively transverse arc $\gamma_z$, and so $\gamma_x^n = \gamma_x*\gamma_{f(x)}*\cdots*\gamma_{f^{n-1}(x)}$ is a positively transverse arc joining $x$ to $f^n(x)$.

\begin{figure}[ht]
\centering
\includegraphics[width = \textwidth]{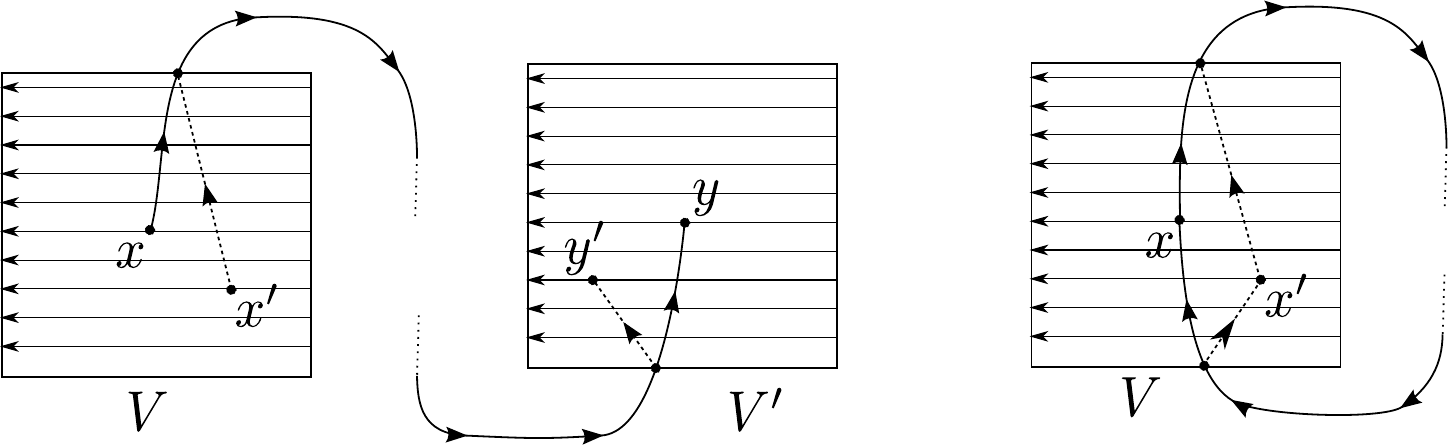}
\caption{Proofs of $(2)$ and $(3)$}
\label{fig:pta3}
\end{figure}

To prove (2) it suffices to consider flow boxes of $\mc{F}$ near $x$ and $y$ (see the left side of Figure \ref{fig:pta3}). Similarly, to prove (3) it suffices to show that if $x$ is nonwandering then there is a positively transverse arc joining $x$ to itself (see the right side of Figure \ref{fig:pta3}). Observe that due to (1) and (2), we can find neighborhoods $V$ of ${f}^{-1}(x)$ and $V'$ of $x$ such that every point of $V$ can be joined to every point of $V'$ by a positively transverse arc, and reducing $V'$ if necessary we may also find a neighborhood $V''$ of $f(x)$ such that every point of $V'$ can be joined to every point of $V''$ by a positively transverse arc. Since $x$ is nonwandering, we can find $y\in V'$ so close to $x$ that $f(y)\in V''$ and such that $f^n(y)\in V'\cap f(V)$ for some $n>0$ which we may assume large. Thus we can find a positively transverse arc from $x\in V'$ to $f(y)\in V''$, another from $f(y)$ to $f^{n-1}(y)$ (namely, $\gamma^{n-2}_{f(y)}$), and a third one from $f^{n-1}(y)\in V$ to $x\in V'$. Concatenation of these arcs gives a positively transverse arc from $x$ to itself, as we wanted.

Finally, to prove $(4)$, fix $x\in K$ and let $K'$ be the set of all points of $K$ which are endpoints of positively transverse arcs starting at $x$. From (2) follows that $K'$ is open in $K$, and from (3) we know that $x\in K'$. Moreover, $(3)$ also implies $K\sm K'$ is open in $K$. Thus $K'$ is both open and closed in $K$, and the connectedness implies $K=K'$.
\end{proof}

\subsection{Gradient-like foliations}\label{sec:gradient}

Let $\mc{F}$ be an oriented foliation with singularities of $\T^2$ such that $\sing(\mc{F})$ is totally disconnected. A leaf $\Gamma$ of $\mc{F}$ is a \emph{connection} if both its $\omega$-limit and its $\alpha$-limit are one-element subsets of $\sing(\mc{F})$. By a \emph{generalized cycle of connections} of $\mc{F}$ we mean a loop $\gamma$ such that $[\gamma]\sm \sing(\mc{F})$ is a disjoint (not necessarily finite) union of regular leaves of $\mc{F}$, with their orientation matching the orientation of $\gamma$.

Using the terminology of Le Calvez \cite{lecalvez-equivariant}, we say that a loop $\Sigma$ in $\T^2$ is a \emph{fundamental loop} for $\mc{F}$ if $\Sigma$ can be written as a concatenation of finitely many loops $\alpha_1,\dots,\alpha_n$ with a common base point $z_0$ such that, denoting by $\alpha_i^*\in H^1(\T^2,\Z)\simeq \Z^2$ the homology class of $\alpha_i$, 
\begin{itemize}
\item each loop $\alpha_i$ is positively transverse to $\mc{F}$,
\item $\sum_{i=1}^n \alpha_i^*=0$, and
\item for each $\kappa \in H^1(\T^2,\Z)$ there are positive integers $k_1,\dots,k_n$ such that $\sum_{i=1}^n k_i \alpha_i^* = \kappa$.
\end{itemize}

It is easy to see that if $\Sigma$ is a fundamental loop, then $\T^2\sm [\Sigma]$ is a disjoint union of open topological disks.

If there exists a fundamental loop $\Sigma$ for $\mc{F}$, we say that $\mc{F}$ is \emph{gradient-like}. The key properties about gradient-like foliations that we will use are contained in the following

\begin{lemma} \label{lem:gradient} If $\mc{F}$ is a gradient-like foliation, then 
\begin{itemize} 
\item[(1)] every regular leaf of $\mc{F}$ is a connection,
\item[(2)] there are no generalized cycles, and
\item[(3)] there is a constant $M$ such that $\diamup(\Gamma)<M$ for each regular leaf $\Gamma$.
\end{itemize}
\end{lemma}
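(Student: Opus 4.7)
My plan is to analyze $\mc{F}$ in the universal cover, reducing all three assertions to intersection-number computations against the fundamental loop. Since $\sum_i \alpha_i^* = 0$ in $H_1(\T^2,\Z) \simeq \pi_1(\T^2)$, $\Sigma$ is null-homotopic, so every lift of $\Sigma$ to $\R^2$ is a closed curve. By hypothesis $\T^2 \setminus [\Sigma]$ is a finite disjoint union of open topological disks $D_1,\dots,D_N$; each is inessential as a disk in $\T^2$, so $\R^2 \setminus \pi^{-1}([\Sigma])$ is a disjoint union of bounded open topological disks of diameter at most $M_0 := \max_j \diamup(D_j) < \infty$. Positive transversality of each $\alpha_i$ lifts, so any oriented-leaf arc that meets $\pi^{-1}([\alpha_i])$ does so transversely with sign $+1$ at every crossing; geometric and algebraic intersection numbers of leaf-arcs with translates of $[\alpha_i]$ therefore coincide.

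For (3), fix a regular leaf $\Gamma$ and a lift $\til{\Gamma}$. The key claim is that $\til{\Gamma}$ visits at most one $\Z^2$-translate of each $D_j$; since there are only $N$ choices of $D_j$ and each lifted disk has diameter $\leq M_0$, this confines $\til{\Gamma}$ to a union of at most $N$ chained bounded disks and bounds its diameter by a uniform constant $M$. To prove the claim, suppose $\til{\Gamma}$ visits both $\til{D}$ and $\til{D}+v$ (some $v \in \Z^2$, possibly $v = 0$), both projecting to the same $D_j$. Let $\sigma \subset \til{\Gamma}$ be the subarc between those visits, and close its projection $\pi(\sigma)$ with an arc inside $D_j$ to form a loop $L \subset \T^2$ of homology class $v$. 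Then $[L] \cdot [\Sigma] = v \cdot 0 = 0$, but $\pi(\sigma)$ must cross $[\Sigma]$ at least once to exit $\til{D}$, and every such crossing contributes $+1$ to $[L] \cdot [\Sigma]$ by positive transversality (while the closure arc inside $D_j$ avoids $[\Sigma]$), a contradiction.

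For (2), the same intersection computation applied to a putative generalized cycle $\gamma$ gives $[\gamma] \cdot [\Sigma] = 0$ with all contributions $+1$, so $\gamma \cap [\Sigma] = \emptyset$ and $\gamma \subset D_j$ for some $j$; hence $\gamma$ is null-homotopic and lifts to a closed loop $\til{\gamma}$ in a bounded disk $\til{D}_j$. The main obstacle is to rule out such a $\til{\gamma}$: replacing it by a simple sub-cycle, it bounds a flow-invariant Jordan disk $\til{U} \subset \til{D}_j$ (its boundary being a union of leaves). I would combine Poincar\'e--Bendixson inside $\ol{\til{U}}$ with positive transversality at $\partial \til{D}_j$ to argue that any inner $\omega$-limit set is itself a smaller generalized cycle, again confined to a component of $\R^2 \setminus \pi^{-1}([\Sigma])$ by the intersection argument, and iterating to extract a contradiction from the resulting nested family of invariant disks. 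Making this final step rigorous is the principal technical challenge.

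Finally, (1) follows from (2) and (3): by (3), $\til{\Gamma}$ is bounded, so $\omega(\til{\Gamma})$ is a nonempty compact invariant subset of $\R^2$; classical Poincar\'e--Bendixson forces it to be a fixed point, a periodic orbit, or a polycycle, and the last two are generalized cycles excluded by (2). Hence $\omega(\til{\Gamma})$ is a single singularity, and symmetrically for $\alpha(\til{\Gamma})$; projecting to $\T^2$ proves $\Gamma$ is a connection.
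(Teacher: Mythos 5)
Your argument for (3) is essentially correct, and it is the paper's own argument in disguise: the paper phrases the count via the function $\Lambda(z)=$ (algebraic intersection number with $\Sigma$ of an arc from a base point to $z$), well defined because $\Sigma^*=0$ and monotone along leaves, whereas you phrase it as ``same-sign crossings versus zero homological pairing''; both bound the number of crossings of a leaf with $[\Sigma]$, hence the number of complementary disks it can visit. Likewise your reduction in (2), showing that a generalized cycle must be disjoint from $[\Sigma]$ and hence lie in a single complementary disk $D_j$, is fine. The genuine gap is exactly the step you flag as ``the principal technical challenge'': ruling out a null-homotopic cycle (or closed leaf) inside $D_j$. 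This cannot be done by the nested Poincar\'e--Bendixson iteration you propose, because at the level of foliations the statement is false: a foliation admitting a fundamental loop can perfectly well have a center-type singularity inside a complementary disk, surrounded by null-homotopic closed leaves, and your nested family of invariant disks would simply shrink down to that singularity without producing any contradiction. The obstruction is dynamical, not foliation-theoretic: one must use that $f$ is nonwandering and that $\mc{F}$ is dynamically transverse to the isotopy. The paper takes a regular point $z$ on the putative closed leaf or cycle, applies Proposition \ref{pro:pta} (every nonwandering point lies on a loop positively transverse to $\mc{F}$), and then runs your own sign argument against that transverse loop: it crosses the cycle with constant sign at least once, while in your reduced situation the cycle is null-homotopic, so the algebraic intersection must vanish --- a contradiction. (For cycles of nonzero homology class the paper instead invokes the third defining property of the fundamental loop, that $-\gamma^*$ is a positive integer combination of the $\alpha_i^*$; your proof never uses that property, which is a symptom of the missing mechanism.) So the gap is fillable, but not by the route you sketch.

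A secondary point concerns (1): since $\sing(\mc{F})$ is only totally disconnected (it may be a Cantor set), ``classical Poincar\'e--Bendixson'' with its trichotomy fixed point / periodic orbit / polycycle does not apply to $\hat{\mc{F}}$. One needs the Solntzev-type theorem quoted in the paper (after embedding $\hat{\mc{F}}$ in a topological flow via Whitney's theorem), whose third alternative is precisely a generalized cycle of connections, possibly involving infinitely many singularities. With that statement, and with closed leaves also excluded by the dynamical argument above, your deduction of (1) from (2) and (3) coincides with the paper's.
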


\begin{proof}
We outline the proof, since the main ideas are contained in \S 10 of \cite{lecalvez-equivariant}. The difference here is that we do not have finitely many singularities. Let $\Sigma$ be a fundamental loop with base point $x_0$ as defined above, so $\T^2\sm [\Sigma]$ is a disjoint union of simply connected open sets.  After a perturbation of the arcs $\alpha_i$ to put them in general position, one may assume that $\T^2\sm [\Sigma]$ has finitely many connected components, so it is a disjoint union of finitely many topological disks $\{D_i\}_{1\leq i\leq k}$.

A function $\Lambda$ is then defined on $\T^2 \sm [\Sigma]$ by fixing a point $z_0\in \T^2\sm [\Sigma]$ and letting $\Lambda(z)$ be the algebraic intersection number $\sigma\wedge \Sigma$ of any arc $\sigma$ joining $z_0$ to $z$ with $\Sigma$. This is independent of the choice of $\sigma$, because $\Sigma^*=0$. The function $\Lambda$ is constant on each disk $D_i$, and it has the property that if $\sigma$ is an arc joining $z\in \T^2\sm [\Sigma]$ to $z'\in \T^2\sm [\Sigma]$, then $\sigma\wedge \Sigma =\Lambda(z')-\Lambda(z)$.  Note that $\Lambda$ attains at most $k$ different values (one for each disk $D_i$). The fact that $\Sigma$ is positively transverse to $\mc{F}$ implies that if $\Gamma\colon \R\to \T^2$ is any leaf of $\mc{F}$, then the map $t\mapsto \Lambda(\Gamma(t))$ (defined for all $t$ such that $\Gamma(t)\notin [\Sigma]$) is non-increasing, and it decreases after each $t$ such that $\Gamma(t)\in [\Sigma]$. 

The proof of part (i) in Proposition 10.4 of \cite{lecalvez-equivariant} shows that $\mc{F}$ has no closed leaves: 
Suppose $\Gamma$ is a closed leaf of $\mc{F}$, and let $z$ be a point of $\Gamma$. Since there are no wandering points, by Proposition \ref{pro:pta} there is a positively transverse loop $\gamma$ based in $z$. This implies that $\Gamma\wedge \gamma<0$. On the other hand, there exist positive integers $a_1,\dots, a_n$ such that $-\gamma^*=a_1\alpha_1^*+\dots+a_n\alpha_n^*$, so that letting $\gamma'=\alpha_1^{a_1}*\cdots*\alpha_n^{a_n}$ one has $\Gamma\wedge\gamma'=\Gamma\wedge (-\gamma)=-\Gamma\wedge\gamma$. Thus 
$$0>\Gamma\wedge \gamma = -\Gamma\wedge \gamma' = -(a_1\Gamma\wedge \alpha_1+\cdots+a_n\Gamma\wedge\alpha_n) \geq 0,$$
where the latter inequality holds because $\alpha_i$ is a positively transverse arc and $a_i$ is a positive integer, for each $i$. This contradiction proves that $\mc{F}$ has no closed leaves.

To show that there is no cycle of connections, first observe that by definition if there is a cycle of connections, it contains a \emph{simple} cycle of connections; that is, a simple loop $\Gamma$ such that $[\Gamma]\sm X$ consists of leaves of $\mc{F}$ with their orientation matching the orientation of $\Gamma$. But then, choosing $z\in [\Gamma]\sm X$ we can repeat the previous argument by finding a positively transverse loop $\gamma$ based in $z$, and obtaining the same contradiction as before. This proves (2).

Recall that if $\Gamma\colon \R\to \T^2$ is a leaf of $\mc{F}$, the map $t\mapsto \Lambda(\Gamma(t))$ defined on $\R\sm \Gamma^{-1}([\Sigma])$ is non-increasing, and it decreases after each $t$ such that $\Gamma(t)\in [\Sigma]$. Since $\Lambda$ attains at most $k$ different values (one for each disk $D_i$), it follows that $\Gamma$ intersects $\Sigma$ at most at $k$ points.

Let $\hat{\mc{F}}$ be the lift of $\mc{F}$ to $\R^2$, and for each $i$, let $\hat{D}_i$ be a connected component of $\pi^{-1}(D_i)$, and let $\mc{B} = \{\hat{D}_i+v: v\in \Z^2, 1\leq i\leq k\}$. Then from the previous paragraph follows that any regular leaf $\hat{\Gamma}$ of $\mc{\hat{F}}$ intersects at most $k+1$ elements of $\mc{B}$. Let $d=\max\{\diam{\hat{D}_i}:1\leq i\leq k\}$. Note that $\diam(D)\leq d$ for each $D\in \mc{B}$. We conclude from these facts that $\diam(\hat{\Gamma})\leq M \doteq (k+1)d$, proving part (3).

Finally, part (1) follows from the following version of the Poincar\'e-Bendixson Theorem, which is a particular case of a theorem of Solntzev \cite{solntzev} (see also \cite[\S 1.78]{stepanov}) and can be stated in terms of continuous flows due to a theorem of  Gutierrez \cite{gutierrez}.
\begin{theorem} Let $\phi=\{\phi_t\}_{t\in \R}$ be a continuous flow on $\R^2$ with a totally disconnected set of singularities. If the forward orbit of a point $\{\phi_t(z)\}_{t\geq 0}$ is bounded, then its $\omega$-limit $\omega_{\phi}(z)$ is one of the following:
\begin{itemize} 
\item A singularity;
\item a closed orbit;
\item a generalized cycle of connections.
\end{itemize}
\end{theorem}
 
Since, being an oriented foliation with singularities, $\hat{\mc{F}}$ can be embeded in a flow (see \cite{whitney, whitney2}), we may apply the above theorem to $\hat{\mc{F}}$. Since $\mc{F}$ has no generalized cycle of connections or closed leaves, neither does $\hat{\mc{F}}$, and we conclude that the $\omega$-limit of every bounded leaf of $\hat{\mc{F}}$ is a singularity (and similarly for the $\alpha$-limit). Since we already showed that every leaf is bounded, this proves (1), completing the proof of Lemma \ref{lem:gradient}.
\end{proof}

\subsection{Existence of gradient-like Brouwer foliations}
\label{sec:gradient-brouwer}

Throughout this section we assume that $f$ is a homeomorphism of $\T^2$ isotopic to the identity and $\hat{f}$ is a lift of $f$ to $\R^2$ such that $\fix(\hat{f})$ is totally disconnected, hence so is $\pi(\fix(\hat{f}))$.

We observe that there exists an isotopy from the identity to $f$ that lifts to an isotopy from $\id_{\R^2}$ to $\hat{f}$: indeed, it suffices to choose any isotopy $(f_t)_{t\in [0,1]}$ from the identity to $f$ and its lift $(\hat{f}_t)_{t\in [0,1]}$ such that $\hat{f}_0=\id_{\R^2}$. Noting that there is some $v\in \Z^2$ such that $\hat{f}-\hat{f}_1=v$, the isotopy from $\id_{\T^2}$ to $f$ lifted by $(\hat{f}_1+tv)_{t\in [0,1]}$ has the required property. 

The next proposition is a direct consequence of Theorem \ref{th:jaulent} and the remarks that follow it. 

\begin{proposition} \label{pro:brouwer} There exists an oriented foliation with singularities $\mc{F}$ of $\T^2$ and an isotopy $\mc{I}=(f_t)_{t\in [0,1]}$ from the identity to $f$ such that
\begin{itemize}
\item $\sing(\mc{F})\subset \pi(\fix(\hat{f}))$,
\item $\mc{I}$ lifts to an isotopy from $\id_{\R^2}$ to $\hat{f}$, 
\item $\mc{F}$ is dynamically transverse to $\mc{I}$, and
\item $\mc{I}$ fixes the singularities of $\mc{F}$.
\end{itemize}
\end{proposition}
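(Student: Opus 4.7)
The plan is to combine Theorem \ref{th:jaulent} with Theorem \ref{th:lecalvez} and the two extension remarks \ref{rem:jaulent-1}--\ref{rem:jaulent-2}; once the starting isotopy is chosen correctly, the whole argument becomes bookkeeping.

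First I would invoke the paragraph preceding the proposition to select an isotopy $\mc{I}_0 = (f_t)_{t\in[0,1]}$ from $\id_{\T^2}$ to $f$ whose canonical lift starting at $\id_{\R^2}$ ends at the prescribed lift $\hat{f}$. With this choice, a fixed point $p\in \fix(f)$ is contractible with respect to $\mc{I}_0$ precisely when $p\in \pi(\fix(\hat{f}))$; in particular the set of contractible fixed points of $\mc{I}_0$ is totally disconnected by hypothesis.

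Next I would apply Theorem \ref{th:jaulent} to $\mc{I}_0$, obtaining a closed set $X \subset \fix(f)$ and a new isotopy $\mc{I}'$ on $\T^2\setminus X$ with no contractible fixed points. The key observation is that Jaulent's destruction procedure only needs to remove contractible fixed points: since condition (1) of Theorem \ref{th:jaulent} forces each path $(f'_t(z))$ to be homotopic in $\T^2$, with fixed endpoints, to $(f_t(z))$, a non-contractible fixed point of $\mc{I}_0$ is never an obstruction to property (2) and may be left outside $X$. Consequently $X$ may be assumed to satisfy $X\subset \pi(\fix(\hat{f}))$ and is therefore totally disconnected. Theorem \ref{th:lecalvez}, applied as in Remark \ref{rem:jaulent-1}, then produces a nonsingular oriented foliation $\mc{F}_X$ on $\T^2\setminus X$ dynamically transverse to $\mc{I}'$.

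Finally I would invoke Remark \ref{rem:jaulent-2}: because $X$ is totally disconnected I can extend $\mc{I}'$ to an isotopy $\mc{I}$ on $\T^2$ fixing every point of $X$, and extend $\mc{F}_X$ to an oriented foliation with singularities $\mc{F}$ whose singular set is exactly $X$. All four conclusions drop out of this construction: $\sing(\mc{F}) = X \subset \pi(\fix(\hat{f}))$; $\mc{I}$ fixes $\sing(\mc{F})$; dynamical transversality is by definition a statement on the complement of the singular set, so $\mc{F}$ is dynamically transverse to $\mc{I}$; and Remark \ref{rem:jaulent-2} asserts that the time-$1$ maps of the lifts of $\mc{I}_0$ and $\mc{I}$ (both starting at $\id_{\R^2}$) coincide, so $\mc{I}$ lifts to an isotopy from $\id_{\R^2}$ to $\hat{f}$. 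The only delicate step is the inclusion $X\subset \pi(\fix(\hat{f}))$; if this is not entirely transparent from the statement of Theorem \ref{th:jaulent}, one has to consult Jaulent's construction to see that the removed set can indeed be taken inside the contractible fixed-point set. Everything else reduces to a direct reading of the cited results and remarks.
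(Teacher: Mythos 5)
Your proposal follows the paper's own route: the paper likewise fixes an isotopy whose lift starting at $\id_{\R^2}$ terminates at $\hat{f}$ (the paragraph preceding the proposition) and then obtains Proposition \ref{pro:brouwer} directly from Theorem \ref{th:jaulent} together with Remarks \ref{rem:jaulent-1} and \ref{rem:jaulent-2}, exactly as you do. Concerning the one step you flag, the inclusion $X\subset\pi(\fix(\hat{f}))$: you do not actually need to inspect Jaulent's construction in the situation where the proposition is applied, because there $\fix(f)$ itself is totally disconnected (Claim \ref{claim:bdfix13}; this is also how the standing hypothesis is restated in \S\ref{sec:linking}), so any closed $X\subset\fix(f)$ is totally disconnected and Remark \ref{rem:jaulent-2} applies; and once the extended isotopy and foliation exist, the inclusion comes for free: the extended isotopy fixes each $p\in X$, hence its lift fixes every $\hat{p}\in\pi^{-1}(p)$, and since that lift terminates at $\hat{f}$ by Remark \ref{rem:jaulent-2}, we get $\hat{p}\in\fix(\hat{f})$, \ie $\sing(\mc{F})=X\subset\pi(\fix(\hat{f}))$. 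If one insists on the literal hypothesis of \S\ref{sec:gradient-brouwer} (only $\fix(\hat{f})$ totally disconnected), then your concern is real and one must indeed extract from Jaulent's construction that $X$ can be taken inside the contractible fixed point set --- a point the paper glosses over as well --- but apart from how this delicate point is discharged, your argument and the paper's coincide.
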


Let $\mc{F}$ be the foliation from Proposition \ref{pro:brouwer}. Recall that for a loop $\gamma$ in $\T^2$, $\gamma^*$ denotes its homology class in $H_1(\T^2, \Z)\simeq \Z^2$. Fix $z\in \T^2\sm X$, and consider the set $\mc{C}(z)$ of all homology classes $\kappa\in H^1(\T^2,\Z)$ such that there is a positively transverse loop $\gamma$ with $\gamma^* = \kappa$. Identifying $H^1(\T^2,\Z)$ with $\Z^2$ naturally and choosing $\hat{z}\in \pi^{-1}(z)$, we see that $\mc{C}(z)$ coincides with the set of all $v\in \Z^2$ such that there is an arc in $\R^2$ positively transverse to the lifted foliation $\hat{\mc{F}}$ joining $\hat{z}$ to $\hat{z}+v$. Note that $\mc{C}(z)$ is closed under addition: if  $v,w\in \mc{C}(z)$ then $v+w\in \mc{C}(z)$. 

The next proposition is contained in Lemma 10.3 and the first paragraph after its proof in \cite{lecalvez-equivariant}. The proof given there works without modifications in our context.

\begin{proposition}\label{pro:zerohull} If $f$ is nonwandering and the convex hull of $\mc{C}(z)$ is $\R^2$ for some $z\in \T^2$, then there is a fundamental loop.
\end{proposition}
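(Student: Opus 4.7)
I would build the fundamental loop as a concatenation of positively transverse loops based at $z$ whose homology classes come from $\mc{C}(z)$. The construction reduces to exhibiting finitely many vectors $v_1, \dots, v_k \in \mc{C}(z)$ satisfying two properties: (i) $0 \in \operatorname{int}(\operatorname{conv}\{v_1, \dots, v_k\})$, and (ii) $v_1, \dots, v_k$ generate $\Z^2$ as an abelian group. Granted these, for each $i$ pick (by definition of $\mc{C}(z)$) a positively transverse loop $\alpha_i$ based at $z$ with $\alpha_i^* = v_i$, and form the concatenation $\Sigma$ that traverses $\alpha_1$ exactly $m_1$ times, then $\alpha_2$ exactly $m_2$ times, and so on, where $m_i>0$ are integers chosen to satisfy $\sum m_i v_i = 0$. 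Each individual occurrence of $\alpha_i$ counts as one of the factors of $\Sigma$, so by construction $\Sigma$ is a concatenation of positively transverse loops at a common base point with null total homology class.

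To obtain (i), I would use the fact that $\operatorname{conv}(\mc{C}(z)) = \R^2$ forces $\mc{C}(z)$ not to be contained in any closed half-plane through the origin, so for each direction $u \in S^1$ there is $s_u \in \mc{C}(z)$ with $\langle s_u, u \rangle < 0$. Continuity of the inner product and compactness of $S^1$ then produce finitely many $v_1, \dots, v_k \in \mc{C}(z)$ trapping $0$ strictly inside their convex hull. Once (i) is in hand, the interiority of $0$ gives positive rationals $t_i$ with $\sum t_i v_i = 0$, and clearing denominators supplies the required positive integers $m_i$. For the third defining property of a fundamental loop, condition (ii) allows writing an arbitrary $\kappa \in \Z^2$ as $\sum \ell_i v_i$ with $\ell_i \in \Z$, after which $\kappa = \sum (\ell_i + N m_i)v_i$ has all coefficients positive for $N$ large enough. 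Thus the three conditions defining a fundamental loop are simultaneously satisfied by $\Sigma$.

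The delicate point is (ii). A priori the monoid $\mc{C}(z) \subset \Z^2$, although it has full convex hull, might lie in a proper sublattice of $\Z^2$, so the convexity hypothesis alone is insufficient to guarantee that the selected $v_i$'s $\Z$-generate $\Z^2$. This is where I expect the main difficulty, and where the nonwandering hypothesis on $f$ must enter: by Proposition \ref{pro:pta}(3), every point of $\T^2$ has a neighborhood in which arbitrary pairs of points are joined by positively transverse arcs, and by concatenating such local arcs with globally prescribed paths one should realize an arbitrary $\kappa\in \Z^2$ as the class of some positively transverse loop based at $z$. Carrying out this realization while keeping all loops rooted at the single base point $z$ (and ensuring positive transversality is preserved at the splice points) is the heart of the argument and where the proof will require the most care.
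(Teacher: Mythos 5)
Your reduction is fine as far as it goes: if one produces $v_1,\dots,v_k\in \mc{C}(z)$ with $0$ in the interior of their convex hull \emph{and} generating $\Z^2$ as a group, then the concatenation you describe is a fundamental loop, and your derivation of the second and third defining properties is correct (the only small gloss is that writing $0$ as a strictly positive rational combination of \emph{all} the $v_i$ uses that the cone they span is the whole plane, not just interiority of $0$; this is easily repaired). The problem is that the proposition is not proved: point (ii), which you yourself single out, is precisely the nontrivial content of the statement, and your proposal contains no argument for it. Your worry is legitimate: since $\mc{C}(z)$ is only known to be an additive sub-semigroup, the convex-hull hypothesis forces it to contain (indeed to coincide with) a full-rank sublattice $L\subset \Z^2$, and your construction then yields a loop satisfying the first two conditions and the third one with $\Z^2$ replaced by $L$. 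What is missing is exactly the assertion $L=\Z^2$, i.e.\ that every integer homology class is the class of some positively transverse loop.

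The route you sketch does not address this. Proposition \ref{pro:pta} only allows you to concatenate arcs that are themselves positively transverse; a ``globally prescribed path'' cannot be inserted unless it respects the left-to-right crossing condition, and that constraint is the very thing that restricts the realizable classes. Local transverse connectivity shows only that $\mc{C}(z)$ is an additive semigroup independent of the base point, which you already use; it gives no control on which classes occur. Note that the paper itself gives no proof here: the statement is quoted from Lemma 10.3 of \cite{lecalvez-equivariant} and the paragraph following its proof, so the step you defer ``to be carried out with care'' is essentially Le Calvez's lemma itself. (Your weaker conclusion, a fundamental loop relative to the sublattice $L$, does appear sufficient for the way Lemma \ref{lem:gradient} is later applied, but it does not establish Proposition \ref{pro:zerohull} as stated.)
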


\begin{remark} \label{rem:zerohull} Note that $\R^2$ is the convex hull of $\mc{C}(z)$ if $0$ is in the interior of the convex hull of $\mc{C}(z)$, due to the fact that if $v\in \mc{C}(z)$ then $nv\in \mc{C}(z)$ for any $n\in \N$. Moreover, to show that the convex hull of $\mc{C}(z)$ contains $0$ in its interior it suffices to find $n$ positively transverse loops $\gamma_1,\dots,\gamma_n$ (not necessarily with base point $z$) such that $0$ is in the interior of the convex hull of $\{\gamma_1^*, \dots, \gamma_n^*\}$. In fact, note that if $z\notin X$, then using the fact that $f$ is nonwandering and $\T^2\sm X$ is connected, Proposition \ref{pro:pta} implies that for each $i$ we may find positively transverse arcs $\sigma_i$ from $z$ to $\gamma_i(0)$ and $\sigma'$ from $\gamma_i(0)$ to $z$. For $m\in \N$, define $\eta_{i,m} = \sigma_i*\gamma_i^m*\sigma_i'$ for $1\leq i\leq n$. Then $\eta_{i,m}$ is a positively transverse arc with base point $z$, and $\eta_{i,m}^* = (\sigma_i*\sigma_i')^* + m\gamma_i^* = w_i+m\gamma_i^*$ where $w_i$ is independent of $m$. Since $0$ is in the interior of the convex hull of $\{\gamma_i^* : 1\leq i\leq n\}$, choosing $m$ large enough it follows easily that $0$ is in the interior of the convex hull of $\{\eta_{i,m}: 1\leq i\leq n\}\subset C(z)$, as claimed.
\end{remark}

\section{Linking number of simply connected open sets}\label{sec:linking}

In this section we assume that $\hat{\mc{I}}=(\hat{f}_t)_{t\in [0,1]}$ is an isotopy from $\id_{\R^2}$ to a homeomorphism $\hat{f}\colon \R^2\to \R^2$, and $\hat{X}$ is a closed set of fixed points of the isotopy $\hat{\mc{I}}$, \ie $\hat{f}_t(p)=p$ for all $t\in [0,1]$ and $p\in \hat{X}$.

\subsection{Winding number}
Given $z\in \R^2$ and an arc $\gamma\colon [0,1]\to \R^2$ such that $z\notin [\gamma]$, we define a partial index as follows: consider the map
$$\xi\colon [0,1]\to \SS^1, \quad \xi(t) = \frac{\gamma(t)-z}{\norm{\gamma(t)-z}}$$
and let $\til{\xi}\colon [0,1]\to \R$ be a lift to the universal covering, so that $e^{2\pi\til{\xi}(t)} = \xi(t)$. 
Then we define 
$$I(\gamma,z) = \til{\xi}(1)-\til{\xi}(0).$$
This number does not depend on the choice of the lift $\til{\xi}$ or the parametrization of $\gamma$ (preserving orientation). If $\gamma$ is a loop, then $I(\gamma,z)$ is an integer and coincides with the winding number of $\gamma$ around $z$.
If $\gamma$ and $\gamma'$ are arcs with $\gamma(1)=\gamma'(0)$ and $z\notin [\gamma]\cup[\gamma']$, then
$$I(\gamma*\gamma',z) = I(\gamma,z)+I(\gamma',z).$$
Additionally, $I(\gamma,z)$ is invariant by homotopies in $\R^2\sm \{z\}$ fixing the endpoints of $\gamma$. A simple consequence of this fact is that if $I(\gamma,z)\neq 0$ and $\gamma$ is closed, then $z$ must be in a bounded connected component of $\R^2\sm [\gamma]$.

\subsection{Linking number of periodic points} 

\begin{notation} Given $z\in \R^2$, we denote by $\hat{\gamma}_z$ the arc $(\hat{f}_t(z))_{t\in [0,1]}$, and for $n\in \N$ we define $$\hat{\gamma}_z^n = \hat{\gamma}_z*\hat{\gamma}_{f(z)}*\cdots*\hat{\gamma}_{f^{n-1}(z)}.$$ 
\end{notation}

If $p\in \hat{X}$ (so $p$ is fixed by $\hat{\mc{I}}$) and $q$ is a periodic point of $\hat{f}$, then we define the linking number $I_{\hat{\mc{I}}}(q,p)\in \Z$ as follows. Let $k$ be the smallest positive integer such that $\hat{f}^k(q)=q$. Observing that $\hat{\gamma}_q^k$ is a loop, we let
$$I_{\hat{\mc{I}}}(q,p) = I(\hat{\gamma}_q^k, p).$$

We will extend this definition, considering a periodic (possibly unbounded) simply connected set instead of the periodic point $q$. 

\subsection{Linking number of open periodic simply connected sets}

\begin{definition}[and Claim]\label{def:index-U} Suppose $U\subset \R^2$ is a simply connected $\hat{f}$-periodic open set and $p\in \hat{X}\sm U$ is given. Let $k$ be the smallest positive integer such that $\hat{f}^k(U)=U$. Fix $z\in U$, and let $\sigma_z$ be an arc contained in $U$ and joining $\hat{f}^k(z)$ to $z$. The \emph{linking number} of $U$ and $p$ is defined as $I_{\hat{\mc{I}}}(U,p) = I(\hat{\gamma}^k_z*\sigma_z, p)$. This number does not depend on the choice of $z$ or the arc $\sigma_z$ in $U$.
\end{definition}

\begin{proof}[Proof of the claim]
First observe that $I_{\mc{\hat{I}}}(U,p)$ does not depend on the choice of $\sigma_z$ because if $\sigma'_z$ is any other arc in $U$ joining $\hat{f}^k(z)$ to $z$, then $I(\sigma_z*(-\sigma'_z),p)=0$ because $p\notin U$, and $U$ is simply connected. Thus $I(\hat{\gamma}^k_z*\sigma_z, p) =I(\hat{\gamma}^k_z, p)+ I(\sigma_z,p) = I(\hat{\gamma}^k_z, p)+I(\sigma_z',p) = I(\hat{\gamma}^k_z*\sigma_z', p)$ as required.

Now let $z'$ be another point in $U$, and fix an arc $\eta$ in $U$ joining $z$ to $z'$. 
We use the notation $\eta^s(t) = \eta|_{[0,s]}(st)$ (so $\eta^s$ is the sub-arc of $\eta$ from $\eta(0)$ to $\eta(s)$).  Letting $\sigma_{z'}=(-\hat{f}^k\circ\eta)*\sigma_z*\eta$, which is an arc in $U$ joining $\hat{f}^k(z')$ to $z'$, we have a homotopy 
$$\left(\hat{\gamma}^k_{\eta(s)}*(-\hat{f}^k\circ\eta^s)*\sigma_z*\eta^s\right)_{s\in [0,1]}$$
from $\hat{\gamma}^k_z*\sigma_z$ to $\hat{\gamma}^k_{z'}*\sigma_{z'}$ in $\R^2\sm \{p\}$, and therefore
$$I(\hat{\gamma}^k_z*\sigma_z,p)= I(\hat{\gamma}^k_{z'}*\sigma_{z'},p),$$
proving the independence on the choice of $z$.
\end{proof}

As a consequence of the independence on the choice of $z$ or $\sigma_z$ in the previous definition, we obtain the following

\begin{proposition}\label{pro:link-periodic} Let $U$ be the set from Definition \ref{def:index-U}, and suppose that there is $q\in U$ such that $\hat{f}^k(q)=q$ (where $k$ is the smallest positive integer such that $\hat{f}^k(U)=U$). Then $I_{\hat{\mc{I}}}(U,p) = I_{\hat{\mc{I}}}(q,p) = I(\hat{\gamma}^k_q,p)$ for any $p\in \hat{X}\sm \hat{U}$.
\end{proposition}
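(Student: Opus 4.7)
The plan is to apply the well-definedness established in the claim following Definition \ref{def:index-U} with the particular choice $z = q$. Since $\hat{f}^k(q) = q$ by hypothesis, the constant arc $\sigma_q \equiv q$ is a valid choice of arc in $U$ joining $\hat{f}^k(q)$ to $q$ (it trivially lies in $U$ since $q \in U$). This will make the ``correction arc'' in the definition disappear from the winding number computation.

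Concretely, with this choice, the definition gives
\[
I_{\hat{\mc{I}}}(U, p) \;=\; I(\hat{\gamma}^k_q * \sigma_q, p).
\]
Because $p \neq q$, the constant arc $\sigma_q$ contributes zero to the winding number around $p$, so by additivity of $I(\cdot,p)$ under concatenation, $I(\hat{\gamma}^k_q * \sigma_q, p) = I(\hat{\gamma}^k_q, p) + I(\sigma_q, p) = I(\hat{\gamma}^k_q, p)$, yielding the first equality. The second equality $I_{\hat{\mc{I}}}(q, p) = I(\hat{\gamma}^k_q, p)$ is the very definition of the linking number of a periodic point; the fact that $\hat{\gamma}^k_q$ is a loop (necessary for the winding number to be an integer linking number) is ensured precisely by the hypothesis $\hat{f}^k(q) = q$.

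There is no real obstacle here, since all the substantive work—verifying that $I_{\hat{\mc{I}}}(U,p)$ does not depend on the base point $z \in U$ or on the arc $\sigma_z$ used to close the loop up inside $U$—was already carried out in the claim that accompanies Definition \ref{def:index-U}. Once one is free to pick any $z$ and any $\sigma_z$, taking $z = q$ and $\sigma_q$ constant is what strips the auxiliary data away and reduces the linking number of $U$ with $p$ to the linking number of the periodic orbit of $q$ with $p$.
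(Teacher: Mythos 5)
Your proposal is correct and is exactly the paper's argument: the paper proves this proposition in one line by taking $z=q$ and the constant arc $\sigma_z(t)=z$ in Definition \ref{def:index-U}, relying on the independence of the choices established in the claim there. Nothing further is needed.
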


The proof is immediate by using $z=q$ and the constant arc $\sigma_z(t)=z$ in Definition \ref{def:index-U}.

\subsection{A linking lemma}
The following lemma is key in the proof of Theorem \ref{th:bdfix}; it is particularly useful when working with a gradient-like Brouwer foliation. Note that we are not assuming in this section that $\hat{f}$ is a lift of a torus homeomorphism.

\begin{lemma}\label{lem:inter-link} Let $U\subset \R^2$ be an open simply connected $\hat{f}$-periodic set, and assume that there are no wandering points of $\hat{f}$ in $U$. Let $\hat{\mc{F}}$ be an an oriented foliation with singularities of $\R^2$ such that $\hat{X}=\sing(\hat{\mc{F}})$ and for each $z\in \R^2\sm \hat{X}$, the arc $\hat{\gamma}_z$ is homotopic with fixed endpoints in $\R^2\sm \hat{X}$ to an arc positively transverse to the foliation. 

Suppose $\Gamma$ is a leaf of $\hat{\mc{F}}$ joining $p\in \hat{X}\sm U$ to $q\in \hat{X}\sm U$ and intersecting $\ol{U}$. Then either $I_{\mc{\hat{I}}}(U,p)\neq 0$ or $I_{\mc{\hat{I}}}(U,q)\neq 0$.
\end{lemma}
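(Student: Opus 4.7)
The plan is to show that the closed loop representing the linking has nonzero signed intersection with the arc $\ol\Gamma := \Gamma \cup \{p, q\}$, which will force at least one of the two winding numbers to be nonzero.

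First, I would choose $z \in \Gamma \cap U$; if $\Gamma$ only meets $\ol U$ along $\bd U$, one approximates by nearby points of $\Gamma$ entering $U$ and uses continuity of winding numbers for loops avoiding $\{p,q\}$. By Proposition \ref{pro:pta}(1) applied in $\R^2\sm\hat X$, pick an arc $\til\gamma$ positively transverse to $\hat{\mc F}$ joining $z$ to $\hat f^k(z)$ and homotopic to $\hat\gamma^k_z$ with fixed endpoints in $\R^2 \sm \hat X$. Let $\sigma_z \subset U$ be an arc from $\hat f^k(z)$ to $z$ and set $\lambda = \til\gamma * \sigma_z$. Because $U$ is simply connected and disjoint from $\{p,q\}$, the homotopy class of $\sigma_z$ (and hence of $\lambda$) in $\R^2 \sm \{p,q\}$ is uniquely determined by its endpoints, and $I_{\hat{\mc I}}(U, p) = I(\lambda, p)$, $I_{\hat{\mc I}}(U, q) = I(\lambda, q)$.

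Next, I would invoke the standard identity $I(\lambda, p) - I(\lambda, q) = \pm\,\lambda \wedge \ol\Gamma$, valid whenever $\lambda$ avoids $\{p, q\}$ and is transverse to $\ol\Gamma$; it follows by tracking how the winding number of $\lambda$ around a basepoint jumps by $\pm 1$ each time the basepoint traverses $\lambda$ while moving along $\ol\Gamma$ from $p$ to $q$. The task reduces to showing $\lambda \wedge \ol\Gamma \neq 0$. Decomposing $\lambda\wedge\ol\Gamma = \til\gamma\wedge\Gamma + \sigma_z\wedge\Gamma$, the positive transversality of $\til\gamma$ to $\hat{\mc F}$ makes every interior transverse crossing of $\til\gamma$ with the leaf $\Gamma$ contribute with the same sign; after perturbing $z$ slightly to one side of $\Gamma$, the initial transition of $\til\gamma$ across $\Gamma$ contributes one further crossing of this same sign.

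The crux is then to show that the contribution $\sigma_z \wedge \Gamma$ cannot fully cancel this count. Here one uses that, since $U$ is simply connected and disjoint from $\{p, q\}$, each connected component of $\Gamma \cap U$ is a properly embedded arc in $U$ with endpoints on $\bd U$, hence separates $U$ into two simply connected pieces. The homotopy class of $\sigma_z$ in $U$ is fixed by its endpoints, so $\sigma_z\wedge\Gamma$ is an algebraic sum determined by the positions of $\hat f^k(z)$ and the (perturbed) $z$ relative to the separating arcs in $U$. Combined with the fact (also from positive transversality) that $\til\gamma$ leaves $z$ toward a specific side of $\Gamma$, a case analysis on which side $\hat f^k(z)$ lies on yields $\lambda \wedge \ol\Gamma \neq 0$, from which the conclusion $I(\lambda,p)\neq I(\lambda,q)$ follows, and at least one of the two linking numbers is nonzero.

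The main obstacle is precisely this sign-tracking: one must show that the various crossings of $\sigma_z$ with the components of $\Gamma \cap U$ cannot conspire to cancel the uniformly-signed crossings produced by $\til\gamma$. Making this rigorous requires careful use of the separating property of properly embedded arcs in the simply connected set $U$, the fact that $p, q \notin U$ forces both ends of each such arc to lie on $\bd U$, and the one-sided crossing behaviour imposed by positive transversality. The nonwandering hypothesis on $U$ is used implicitly through Proposition \ref{pro:pta}(1) to guarantee that the required positively transverse representative of $\hat\gamma^k_z$ exists.
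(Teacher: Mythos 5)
Your reduction is sound as far as it goes: since the class of a loop $\lambda\subset\R^2\sm\{p,q\}$ in the annulus $A=\R^2\cup\{\infty\}\sm\{p,q\}$ is measured by $I(\lambda,p)-I(\lambda,q)$, which in turn equals $\pm\,\lambda\wedge\ol{\Gamma}$ for the arc $\ol{\Gamma}=\Gamma\cup\{p,q\}$, proving $\lambda\wedge\ol{\Gamma}\neq 0$ would indeed give the lemma (and is in fact equivalent to what the paper proves, namely that $\hat{\gamma}^k_z*\sigma_z$ is essential in $A$). But the step you label ``the crux'' --- that the crossings of $\sigma_z$ with the components of $\Gamma\cap U$ cannot cancel the uniformly signed crossings of $\til{\gamma}$ --- is precisely the whole content of the lemma, and your sketch gives no mechanism for it beyond an unperformed ``case analysis''. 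Worse, it cannot be completed from the ingredients you allow yourself, because you use the hypothesis that $U$ has no wandering points only through Proposition \ref{pro:pta}(1), which does not require it at all; and the lemma is false without that hypothesis. Concretely, take $\hat{X}=\{p,q\}$ with $p=(0,0)$, $q=(1,0)$, the foliation whose leaves are the horizontal lines $y=c$ ($c\neq 0$) together with the three subarcs of the $x$-axis, oriented so that downward motion is positively transverse, and $\hat{f}(x,y)=(x,\,y-\epsilon(x,y))$ with $\epsilon>0$ off $\{p,q\}$ and $\epsilon=0$ there. The segment $\Gamma=(0,1)\times\{0\}$ is a leaf from $p$ to $q$, and $U=\{0<x<1\}$ is an open, simply connected, $\hat{f}$-invariant set meeting $\ol{U}\cap\Gamma\neq\emptyset$ with $p,q\notin U$; yet the loop $\hat{\gamma}_z*\sigma_z$ can be taken inside $U$, so $I_{\hat{\mc{I}}}(U,p)=I_{\hat{\mc{I}}}(U,q)=0$. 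In this example your signed count cancels exactly (one positive crossing from $\til{\gamma}$, one compensating crossing from $\sigma_z$), which shows the non-cancellation you need is not a consequence of transversality plus the topology of $U$: it must come from the dynamics.

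This is where the paper's argument differs in an essential way. It passes to the universal cover $\til{A}$ of $A$, notes that $\til{f}^k(\til{U})=T\til{U}$ for a deck transformation $T$, and shows $T\neq\id$ as follows: if $T=\id$, the lift $\til{\Gamma}$ of $\Gamma$ is a properly embedded line and, by dynamical transversality, a Brouwer line for $\til{f}$, so every point of $\til{\Gamma}$ is wandering; but since $\Gamma$ meets $\ol{U}$ and $\pi$ (resp.\ $\tau$) is injective on $U$ (resp.\ on translates of $\til{U}$), the nonwandering points of $U$ accumulate on $\til{\Gamma}$, a contradiction. The nontriviality of $T$ is exactly the statement $I(\lambda,p)-I(\lambda,q)\neq 0$ that you are after. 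So the missing idea in your proposal is this dynamical input (Brouwer line in the annular cover versus nonwandering points of $U$ near $\ol{U}\cap\Gamma$); without it, the intersection-number bookkeeping alone cannot succeed. A secondary issue: when $\Gamma$ meets $\ol{U}$ only in $\bd U$ there may be no points of $\Gamma$ inside $U$ at all, so your choice ``$z\in\Gamma\cap U$'' and the perturbation producing the ``initial crossing'' of $\til{\gamma}$ with $\Gamma$ are not available, and that part of the count breaks down as well.
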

\begin{proof} 
Let $A$ be the annulus obtained by removing the points $p,q$ from the one-point compactification $\R^2\cup\{\infty\}$ of 
$\R^2$; that is, $A = \R^2\cup\{\infty\}\sm \{p,q\}$, and let $\tau\colon\til{A}\to A$ be the universal covering. Note that the isotopy $\hat{\mc{I}}|_{\R^2\sm\{p,q\}}$ extends to $A$ by fixing the point at $\infty$, and this extension lifts to an isotopy $\til{\mc{I}}=(\til{f}_t)_{t\in [0,1]}$ from $\id_{\til{A}}$ to some map $\til{f}=\til{f}_1$, which commutes with the group of covering transformations $\deck(\tau)$. The foliation $\hat{\mc{F}}|_{\R^2\sm \{p,q\}}$ also extends to $A$ by adding a singularity at $\infty$, and this extension lifts to a foliation $\til{\mc{F}}$ of $\til{A}$ with singularities in $\til{X} = \tau^{-1}(X\cup\{\infty\}\sm \{p,q\})$. 

Because $\mc{F}$ is dynamically transverse to $\mc{I}$, one easily sees that $\til{\mc{F}}$ is also dynamically transverse to $\til{\mc{I}}$; \ie, if $z\in \til{A}$ is not fixed by $\til{\mc{I}}$, then the arc $(\til{f}_t(z))_{t\in[0,1]}$ is homotopic with fixed endpoints in $A\sm \til{X}$, to an arc positively transverse to $\til{\mc{F}}$. 

Let $\til{U}$ be a connected component of $\tau^{-1}(U)$. Then $\til{U}$ is simply connected, and $\tau|_{\til{U}}$ is injective. Moreover, $\til{f}^k(\til{U})=T\til{U}$ for some covering transformation $T\in \deck(\tau)$, where $k$ is the least positive integer such that $f^k(U)=U$.

We will show that $T\neq \id$. Suppose for contradiction that $\til{f}^k(\til{U})=\til{U}$.  Let $z\in [\Gamma]\cap \cl(U)$, choose $\til{z}\in \tau^{-1}(z)$, and let $\til{\Gamma}$ be the leaf of $\til{\mc{F}}$ through $\til{z}$ (so that $\tau(\til{\Gamma})=\Gamma$). 
From the fact that the $\omega$-limit and $\alpha$-limit of $\Gamma$ are $q$ and $p$, respectively, it follows that $\til{\Gamma}$ is a proper embedding of $\R$ in $\til{A}\simeq \R^2$. Thus $\til{A}\sm [\til{\Gamma}]$ has exactly two connected components, and the fact that $\til{\mc{F}}$ is dynamically transverse implies that $\Gamma$ is a Brouwer line; \ie, $\til{f}(\til{\Gamma})$ and $\til{f}^{-1}(\til{\Gamma})$ belong to different connected components of $\til{A}\sm [\til{\Gamma}]$. This implies that one of the connected components $V$ of $\til{A}\sm [\til{\Gamma}]$ satisfies $\til{f}(\cl{V})\subset V$. It follows from this fact that every point of $[\til{\Gamma}]$ is wandering for $\til{f}$; in particular $\til{z}$ is wandering for $f$, so there is a neighborhood $W$ of $\til{z}$ such that $\til{f}^{n}(W)\cap W=\emptyset$ for all $n\in \N$. But $\tau(W)\cap U\neq \emptyset$, because $z= \tau(\til{z})\in \bd U$; thus we can find $T'\in \deck(\tau)$ such that $T'\til{U}\cap W\neq \emptyset$ (see Figure \ref{fig:prelimi3}). Since $\til{f}$ commutes with the Deck transformations, it follows that $\til{f}^k(T'\til{U})=T'\til{U}$, and since $\tau|_{T'\til{U}}$ is injective and $f^k$ has no wandering points in $U$, we conclude that $\til{f}^k$ has no wandering points in $T'\til{U}$, contradicting the fact that $\til{U}\cap W\neq \emptyset$. 

\begin{figure}[ht]
\includegraphics[height=5cm]{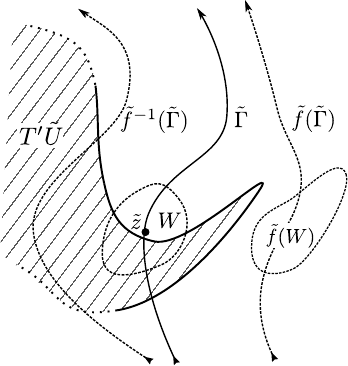}
\caption{Proof of Lemma \ref{lem:inter-link}}
\label{fig:prelimi3}
\end{figure}

Thus $T\neq \id$. Fix $\til{z}\in \til{U}$, and as in the previous section, let $\til{\gamma}_{\til{z}}$ denote the loop $(\til{f}(\til{z}))_{t\in [0,1]}$ and $\til{\gamma}_{\til{z}}^k = \til{\gamma}_{\til{z}}*\til{\gamma}_{\til{f}(\til{z})}\cdots *\til{\gamma}_{\til{f}^{k-1}(\til{z})}$. Choose any arc $\til{\sigma}_{\til{z}}$ in $T\til{U}$ joining $\til{f}^k(\til{z})$ to $T\til{z}$. Then letting $z=\tau(\til{z})$ and $\sigma_z = \tau\circ \til{\sigma}_z$, it follows that $\tau\circ(\til{\gamma}^k_{\til{z}}*\til{\sigma}_z) = \hat{\gamma}^k_z*\sigma_z$ is a homotopically nontrivial loop in $A$, since it lifts to a loop joining $\til{z}$ to $T\til{z}$. Of course it is still homotopically nontrivial in $A\sm \{\infty\} = \R^2\sm\{p,q\}$. This means that $I(\hat{\gamma}^k_z*\sigma_z, p)\neq 0$ or $I(\hat{\gamma}^k_z*\sigma_z,q)\neq 0$. Since $\sigma_z$ is an arc in $U$ joining $\hat{f}^k(z)$ to $z$, it follows from the definition that $I_{\mc{\hat{I}}}(U,p)\neq 0$ or $I_{\mc{\hat{I}}}(U,q)\neq 0$, as claimed.
\end{proof}

\begin{remark} Looking at the above proof in more detail, one may conclude the following more precise statement: There is $k>0$ such that $I(p,U) + I(q,U) = k$.  To see this, we may choose a simple loop $\alpha$ in $\R^2$ that bounds a disk containing $p$ but not $q$, with $\alpha$ oriented clockwise, as a generator of $\deck(A)$. That is, we may assume that $\deck(A) = \{T_0^k : k\in \Z\}$ where $T_0$ is a covering transformation of $\tau$ such that $T_0(\til{\alpha}(0))=\til{\alpha}(1)$, where $\til{\alpha}$ is any lift of $\alpha$ to $\til{A}$. Further, we may choose $\alpha$ such that it is positively transverse to $\Gamma$. In this setting, when we conclude that $T\neq \id$ in the proof above, the orientation of $\Gamma$ (from $p$ to $q$) implies that $T=T_0^k$ for some $k>0$. Therefore the loop $\hat{\gamma}^k*\sigma_z$ is homotopic to $\alpha^k$ in $A$, so that $I(\hat{\gamma}^k*\sigma_z,p)+I(\hat{\gamma}^k*\sigma_z,q) = I(\alpha^k, p)+I(\alpha^k,q)$. One can conclude easily from this fact that $I(p,U)+I(q,U) = k$. 
\end{remark}

\subsection{Application to gradient-like foliations}

Let us assume in this subsection the same hypotheses of \S\ref{sec:gradient-brouwer}, \ie $f\colon \T^2\to \T^2$ is a nonwandering homeomorphism homotopic to the identity with a totally disconnected set of fixed points and $\hat{f}$ is a lift of $f$. Let $\mc{F}$ and $\mc{I}$ be the oriented foliation with singularities and the isotopy given by Proposition \ref{pro:brouwer}, so that
\begin{itemize}
\item $\sing(\mc{F})\subset \pi(\fix(\hat{f}))$,
\item $\mc{I}$ lifts to an isotopy $\hat{\mc{I}} = (\hat{f}_t)_{t\in [0,1]}$ from $\id_{\R^2}$ to $\hat{f}$, 
\item $\mc{F}$ is dynamically transverse to $\mc{I}$, and
\item $\mc{I}$ fixes the singularities of $\mc{F}$ (and $\hat{\mc{I}}$ fixes the singularities of $\hat{\mc{F}}$)
\end{itemize}

We assume additionally that $\mc{F}$ is gradient-like. Denote $\hat{\mc{F}}$ the lift of $\mc{F}$ to $\R^2$, and let $\hat{X}=\sing(\mc{F})$.

\begin{proposition}\label{pro:large-X}
For each $k\in \N$, there is a constant $M_k$ such that if $U\subset \R^2$ is an open simply connected $\hat{f}^k$-invariant set without wandering points and $\diam(U)>M_k$, then $U\cap \hat{X}\neq \emptyset$.
\end{proposition}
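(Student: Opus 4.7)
Let me argue the contrapositive: assuming $U\cap \hat{X}=\emptyset$, I will bound $\diam(U)$ by a constant depending only on $k$. Fix two constants: let $D_0 = \sup_{z\in \R^2}\diam\{\hat{f}_t(z) : t\in[0,1]\}$, which is finite because $\hat{f}_t - \id$ is $\Z^2$-periodic, and let $M$ be the uniform bound on $\diam([\hat{\Gamma}])$ for regular leaves $\hat{\Gamma}$ of $\hat{\mc{F}}$ supplied by Lemma \ref{lem:gradient}(3). I claim $M_k = 2(M+kD_0)$ works.

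The plan has three stages. First, I produce a fixed point of an iterate of $\hat{f}$ inside $U$. Let $k'$ be the smallest positive integer with $\hat{f}^{k'}(U)=U$, so $k'\mid k$. Since $U$ has no wandering points for $\hat{f}^k$, it has none for $\hat{f}^{k'}$ either (if $\hat{f}^{k'n}(V)\cap V=\emptyset$ for all $n>0$, the same holds along the subsequence $n=k/k'$). Because $U$ is an open simply connected subset of $\R^2$, it is homeomorphic to $\R^2$, and $\hat{f}^{k'}|_U$ is an orientation-preserving self-homeomorphism all of whose points are nonwandering; Proposition \ref{pro:brouwer-trivial} furnishes a point $q\in U$ with $\hat{f}^{k'}(q)=q$. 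By Proposition \ref{pro:link-periodic}, for every $p\in \hat{X}\sm U$ one has $I_{\hat{\mc{I}}}(U,p) = I(\hat{\gamma}^{k'}_q,p)$.

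Second, I control the geometry of the loop $\hat{\gamma}^{k'}_q$. Since each sub-arc $(\hat{f}_t(\hat{f}^i(q)))_{t\in[0,1]}$ has diameter at most $D_0$ and starts at a point within $iD_0$ of $q$, the entire loop $\hat{\gamma}^{k'}_q$ is contained in $\ol{B(q,k'D_0)}\subset \ol{B(q,kD_0)}$. Consequently every bounded connected component of $\R^2\sm [\hat{\gamma}^{k'}_q]$ is contained in this ball, so any point $p$ with $d(p,q)>kD_0$ satisfies $I(\hat{\gamma}^{k'}_q,p)=0$.

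Now suppose for contradiction that $\diam(U) > M_k = 2(M+kD_0)$. Then by the triangle inequality there is $z\in U$ with $d(z,q) > M+kD_0$. Since $z\in U\subset \R^2\sm \hat{X}$, the leaf $\hat{\Gamma}$ of $\hat{\mc{F}}$ through $z$ is regular; by Lemma \ref{lem:gradient}(1) its $\alpha$- and $\omega$-limits are singletons $\{p_1\},\{p_2\}\subset \hat{X}$, and $\diam([\hat{\Gamma}])\leq M$. Hence $d(p_i,q)\geq d(z,q)-d(p_i,z) > kD_0$ for $i=1,2$, so by the second stage $I_{\hat{\mc{I}}}(U,p_i)=I(\hat{\gamma}^{k'}_q,p_i)=0$. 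However, $\hat{\Gamma}$ joins $p_1,p_2\in \hat{X}\sm U$ and intersects $\ol{U}$, so Lemma \ref{lem:inter-link} forces at least one of the two linking numbers to be nonzero, a contradiction. Thus $\diam(U)\leq M_k$ whenever $U\cap \hat{X}=\emptyset$, proving the proposition. The only slightly delicate point is reconciling the period of $U$ with the fixed point of $\hat{f}^{k'}$ obtained from Brouwer's lemma, which is needed to invoke Proposition \ref{pro:link-periodic}; everything else is a direct combination of the gradient-like structure (bounded leaves, connections) with the linking obstruction.
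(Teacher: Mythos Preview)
Your proof is correct and follows essentially the same route as the paper's: produce a periodic point $q\in U$ via Brouwer's lemma, observe that the loop $\hat{\gamma}^{k'}_q$ lies in a ball of radius $k'D_0$ around $q$ so that all nonzero linking occurs inside that ball, and then use the bounded-leaf property of the gradient-like foliation together with Lemma~\ref{lem:inter-link} to reach a contradiction if $U$ extends beyond radius $k'D_0+M$. Your introduction of the minimal period $k'$ is in fact slightly more careful than the paper, which tacitly redefines $k$ to be the minimal period in the course of its proof.
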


\begin{proof} 
Since $\mc{F}$ is gradient-like, there is a constant $M'$ such that every regular leaf $\Gamma$ of $\hat{\mc{F}}$ connects two different elements of $\hat{X}$ and $\diam(\Gamma)<M'$. 
Let $$M'' = \sup \left\{\norm{\smash{\hat{f}_t(x)-\hat{f}_s(x)}} : s,t\in [0,1], x\in [0,1]^2\right\}.$$ 
From the fact that $\hat{f}$ commutes with integer translations, we have that $\diam([\hat{\gamma}_x])\leq M''$ for any $x\in \R^2$.

Let $k$ be the smallest positive integer such that $\hat{f}^k(U)=U$. 
Since $\hat{f}^k|_{U}$ is nonwandering (because $\hat{f}|_U$ is), Proposition \ref{pro:brouwer-trivial} implies that $\hat{f}^k$ has a fixed point $z$ in $U$. Define 
$$A=\left\{p\in \hat{X}\sm U: I_{\hat{\mc{I}}}(z,p) \neq 0\right\}.$$
Observe that $A$ coincides with the set of all $p\in \hat{X}\sm U$ such that $I(\hat{\gamma}^k_{z},p)\neq 0$, which is contained in the convex hull of $[\hat{\gamma}^k_{z}]$. Since $\diam([\hat{\gamma}^k_{z}])\leq \sum_{i=0}^{k-1} \diam([\hat{\gamma}_{f^i(z)}]) \leq kM''$, we conclude that  $A\subset B_{kM''}(z)$ (the ball of center $z$ and radius $kM''$).

On the other hand, by Proposition \ref{pro:link-periodic} it follows that
$$A = \left\{p\in \hat{X} : I_{\hat{\mc{I}}}(U,p) \neq 0\right\}.$$
\begin{figure}[ht]
\centering
\includegraphics[height=4cm]{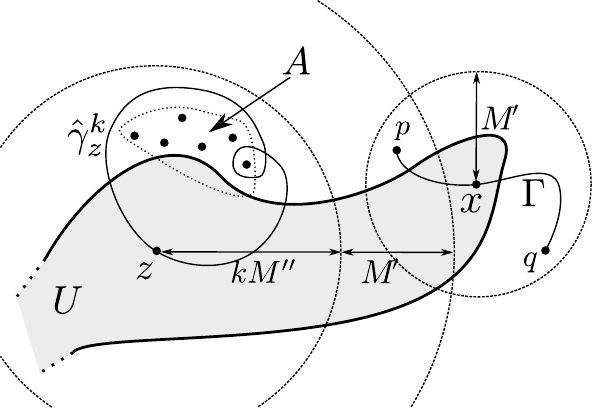}
\caption{Proof of Proposition \ref{pro:large-X}}
\label{fig:appli1}
\end{figure}
Suppose that $\diam(U)>M_k\doteq 2(kM''+M')$. We claim that $U$ intersects $\hat{X}$. Suppose on the contrary that $U\cap \hat{X}=\emptyset$. There is some point $x\in U\sm \ol{B}_{kM''+M'}(z)$, and by our assumption $x\notin \hat{X}$. See Figure \ref{fig:appli1}.
The leaf $\Gamma$ of $\hat{\mc{F}}$ through $x$ is such that $\diam(\Gamma)<M'$, and so its endpoints are two elements of $p,q$ of $\hat{X}\sm \ol{B}_{kM''}(z)$. Since $U\cap \hat{X}=\emptyset$, Lemma \ref{lem:inter-link} implies that either $I_{\hat{\mc{I}}}(U,p) \neq 0$ or $I_{\hat{\mc{I}}}(U,q) \neq 0$, so that either $p$ or $q$ is in $A$. This contradicts the fact that $A\subset B_{kM''}(z)$.
\end{proof}

As an immediate consequence we have the following 
\begin{corollary} Any $\hat{f}$-periodic free topological disk without wandering points is bounded (by a bound that depends only on the period).
\end{corollary}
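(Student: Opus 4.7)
The plan is to deduce the corollary directly from Proposition~\ref{pro:large-X} by verifying that a free periodic topological disk satisfies its hypotheses while also being disjoint from $\hat{X}$; the contrapositive of the proposition then produces the desired bound.

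Concretely, let $U\subset\R^2$ be an $\hat{f}$-periodic free topological disk without wandering points, with period $k$, so $\hat{f}^k(U)=U$ and, by freeness, $\hat{f}(U)\cap U=\emptyset$ (equivalently, the iterates $U,\hat{f}(U),\dots,\hat{f}^{k-1}(U)$ are pairwise disjoint). Then $U$ is open, simply connected, $\hat{f}^k$-invariant, and by assumption contains only nonwandering points, so the structural hypotheses of Proposition~\ref{pro:large-X} are satisfied.

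The crucial additional observation is that $U\cap\hat{X}=\emptyset$. Indeed, in the setup of \S\ref{sec:gradient-brouwer} the lifted isotopy $\hat{\mc{I}}$ fixes every point of $\hat{X}$ pointwise, so $\hat{X}\subset\fix(\hat{f})$; but no $p\in U$ can satisfy $\hat{f}(p)=p$, for otherwise $p\in U\cap\hat{f}(U)$, contradicting freeness. Hence the contrapositive of Proposition~\ref{pro:large-X} forces $\diam(U)\le M_k$, giving a bound that depends only on the period $k$.

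There is essentially no obstacle here: all of the substantive work has already been absorbed into Proposition~\ref{pro:large-X} (via the linking-number construction of \S\ref{sec:linking} and the bounded-leaf property of the gradient-like foliation in Lemma~\ref{lem:gradient}). The corollary merely records that its contrapositive applies in exactly those cases where freeness is available to exclude interior fixed points of $\hat{f}$, and thereby points of $\hat{X}$.
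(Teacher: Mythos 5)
Your argument is correct and is exactly the paper's intended one: the corollary is stated there as an immediate consequence of Proposition~\ref{pro:large-X}, with the (unwritten) observation that freeness excludes fixed points of $\hat{f}$, hence points of $\hat{X}\subset\fix(\hat{f})$, from $U$, so the contrapositive of the proposition yields $\diam(U)\le M_k$ with $k$ the period.
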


\begin{proposition}\label{pro:inter-endpoint} If $U$ is an $\hat{f}$-periodic simply connected open set intersecting $\hat{X}$ and there are no wandering points of $\hat{f}$ in $U$, then every leaf of $\hat{\mc{F}}$ that intersects $\ol{U}$ has one endpoint in $U$.
\end{proposition}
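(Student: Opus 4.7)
The plan is to derive the conclusion as a direct application of Lemma \ref{lem:inter-link}, using the hypothesis that $U$ contains a singularity of $\hat{\mc{F}}$ to show that the relevant linking numbers vanish.

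Let $\Gamma$ be a regular leaf of $\hat{\mc{F}}$ that intersects $\ol{U}$. Since $\hat{\mc{F}}$ is (the lift of) a gradient-like foliation, Lemma \ref{lem:gradient}(1) says that $\Gamma$ is a connection, so $\alpha(\Gamma)=\{p\}$ and $\omega(\Gamma)=\{q\}$ for some $p,q\in\hat{X}$. I will argue by contradiction, assuming that neither $p$ nor $q$ lies in $U$, i.e.\ $p,q\in\hat{X}\setminus U$.

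The key observation is that $U\cap \hat{X}$ is nonempty by hypothesis, so we may fix a point $p_0\in U\cap \hat{X}$. Because the isotopy $\hat{\mc{I}}$ fixes every singularity of $\hat{\mc{F}}$ (see the setup in \S\ref{sec:gradient-brouwer} and the bullet list above Proposition \ref{pro:large-X}), the path $\hat{\gamma}_{p_0}=(\hat{f}_t(p_0))_{t\in[0,1]}$ is the constant loop at $p_0$, and consequently so is $\hat{\gamma}^k_{p_0}$ for any $k\in\N$. In particular $p_0$ is a fixed point of $\hat{f}^k$, where $k$ is the smallest positive integer with $\hat{f}^k(U)=U$. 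Applying Proposition \ref{pro:link-periodic} with this choice of $p_0$, for every $r\in\hat{X}\setminus U$ we obtain
\[
I_{\hat{\mc{I}}}(U,r) \;=\; I(\hat{\gamma}^k_{p_0},r) \;=\; 0,
\]
the last equality because $\hat{\gamma}^k_{p_0}$ is the constant loop at $p_0\neq r$. In particular $I_{\hat{\mc{I}}}(U,p)=I_{\hat{\mc{I}}}(U,q)=0$.

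On the other hand, the hypotheses of Lemma \ref{lem:inter-link} are met: $U\subset\R^2$ is open, simply connected, $\hat{f}$-periodic, and contains no wandering points of $\hat{f}$; the foliation $\hat{\mc{F}}$ is dynamically transverse to $\hat{\mc{I}}$ with singular set $\hat{X}$; and $\Gamma$ is a leaf joining $p\in\hat{X}\setminus U$ to $q\in\hat{X}\setminus U$ that intersects $\ol{U}$. Therefore the lemma yields $I_{\hat{\mc{I}}}(U,p)\neq 0$ or $I_{\hat{\mc{I}}}(U,q)\neq 0$, contradicting the vanishing established above. Hence at least one of $p,q$ must belong to $U$, which is the desired conclusion.

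There is no real obstacle here; the only point that requires care is to notice that any singularity of $\hat{\mc{F}}$ lying in $U$ is automatically a fixed point of the \emph{entire} isotopy $\hat{\mc{I}}$ (not merely of $\hat{f}^k$), which is what makes $\hat{\gamma}^k_{p_0}$ trivial and forces the linking numbers to be zero.
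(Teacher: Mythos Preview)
Your proof is correct and follows essentially the same approach as the paper's: both pick a singularity $p_0\in U\cap\hat{X}$, use Proposition \ref{pro:link-periodic} together with the fact that $p_0$ is fixed by the isotopy to conclude $I_{\hat{\mc{I}}}(U,r)=0$ for all $r\in\hat{X}\setminus U$, and then reach a contradiction with Lemma \ref{lem:inter-link}. Your explicit invocation of Lemma \ref{lem:gradient}(1) to justify that $\Gamma$ has endpoints in $\hat{X}$ is a nice touch that the paper leaves implicit from context.
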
 
\begin{proof} Let $k$ be the smallest positive integer such that $\hat{f}^k(U)=U$, and let $z\in U\cap \hat{X}$. Since $\hat{f}^k(z)=z$, Proposition \ref{pro:link-periodic} implies that $I_{\hat{\mc{I}}}(U,p)=I_{\hat{\mc{I}}}(z,p)$ for any $p\in \hat{X}\sm U$. Since $z\in \hat{X}$ is fixed by the isotopy, it follows that $I_{\hat{\mc{I}}}(z,p)=0$ and therefore $I_{\hat{\mc{I}}}(U,p)=0$ for any $p\in \hat{X}\sm U$.

Suppose that a regular leaf $\Gamma$ of $\hat{\mc{F}}$ intersects $\ol{U}$, and let $p_1$ and $p_2$ be the endpoints of $\Gamma$. If neither $p_1$ nor $p_2$ is in $U$, then Lemma \ref{lem:inter-link} implies that $I_{\hat{\mc{I}}}(U,p_i)\neq 0$ for some $i\in \{1,2\}$, contradicting our previous claim. Therefore, one of the two endpoints of $\Gamma$ belongs to $U$.
\end{proof}

\section{A bound on invariant inessential open sets: Proof of Theorem \ref{th:bdfix}}
\label{sec:bdfix}

This section is devoted to the proof of
\begin{theorem*}[\ref{th:bdfix}] If $f\colon \T^2\to \T^2$ is a nonwandering non-annular homeomorphism homotopic to the identity then one and only one of the following properties hold:
\begin{itemize}
\item[(1)] There exists a constant $M$ such that each $f$-invariant open topological disk $U$ satisfies $\diamup(U)<M$; or
\item[(2)] $\fix(f)$ is fully essential and $f$ is irrotational.
\end{itemize}
\end{theorem*}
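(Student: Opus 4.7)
My plan is to argue by contradiction, so assume there exist $f$-invariant open topological disks $U_n\subset\T^2$ with $\diamup(U_n)\to\infty$, while (2) fails. I first dispose of the subcase where $\fix(f)$ is fully essential. Every connected component of $\T^2\sm\fix(f)$ is then an inessential topological disk $D$, which by Theorem \ref{th:brown-kister} is $f$-invariant; since $D$ is inessential its lift $\hat D$ is bounded, and $\hat f(\hat D)=\hat D+v$ for some $v\in\Z^2$. A nonzero $v$ would make the $\hat f$-invariant union $\bigcup_n(\hat D+nv)$ have $v^\perp$-projection bounded by $\diam(\hat D)<\infty$, forcing annularity via Proposition \ref{pro:wall-annular} — a contradiction. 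Hence $v=0$ for every such $D$, making $f$ irrotational and confirming (2), contradicting the assumption. I may therefore assume $\fix(f)$ is not fully essential.

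Next I reduce to totally disconnected $\fix(f)$. An essential component of $\fix(f)$ would contain an invariant essential simple closed curve and yield a lifted invariant strip of bounded width, again forcing annularity; thus $\fix(f)$ is inessential. Its filling $K=\fill(\fix(f))$ is then compact, filled, inessential, and $f$-invariant, and Proposition \ref{pro:collapse} produces a semi-conjugate $f'$ — still nonwandering, non-annular, and homotopic to the identity — whose fixed set is totally disconnected. The components of $\pi^{-1}(K)$ are uniformly bounded (Proposition \ref{pro:inessential-bound}), so the images $h(U_n)$ remain $f'$-invariant disks with $\diamup\to\infty$. I then apply Proposition \ref{pro:brouwer} to $f'$ to obtain an oriented foliation with singularities $\mc F$ on $\T^2$, with $\sing(\mc F)\subset\pi(\fix(\hat{f'}))$, dynamically transverse to an isotopy lifting from $\id_{\R^2}$ to $\hat{f'}$.

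The main obstacle is now to show that $\mc F$ is gradient-like. By Proposition \ref{pro:zerohull} and Remark \ref{rem:zerohull} it suffices to produce positively transverse loops whose homology classes enclose the origin in the interior of their convex hull. The plan is a further proof by contradiction: if every positively transverse loop $\gamma$ had $\langle\gamma^*,w\rangle\geq 0$ for some fixed $w\in\Z^2_*$, then combining Proposition \ref{pro:pta}(4) (applied on the connected nonwandering set $\T^2\sm\sing(\mc F)$) with the fact that the large invariant disks $U_n$ provide orbits ranging freely in both $\pm w$ directions inside $\T^2\sm\sing(\mc F)$, one constructs in $\R^2$ an $\hat{f'}$-invariant open region confined between two lines perpendicular to $w$, contradicting the non-annularity of $f'$ via Proposition \ref{pro:wall-annular}. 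The subtlety lies in converting the half-plane homological constraint into a concrete invariant strip in the cover; this is where the existence of large invariant disks, coupled with the dynamical transversality of the foliation, becomes essential.

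Once $\mc F$ is gradient-like, Lemma \ref{lem:gradient} furnishes a uniform diameter bound $M'$ on its regular leaves, and the linking apparatus of \S\ref{sec:linking} closes the argument. For a large $f'$-invariant disk $U$ with lift $\hat U$ and $\hat{f'}(\hat U)=\hat U+v$: if $v\neq 0$, the $\hat{f'}$-invariant orbit $\bigcup_n(\hat U+nv)$ has $v^\perp$-projection bounded by $\diam(\hat U)$ (finite when $U$ is bounded), giving annularity; if $v=0$, then $\hat U$ is $\hat{f'}$-invariant, simply connected, and nonwandering, so Proposition \ref{pro:large-X} forces $\hat U\cap\hat X\neq\emptyset$ once $\diam(\hat U)>M_1$, and then Proposition \ref{pro:inter-endpoint} combined with the leaf bound $M'$ traps $\cl(\hat U)$ inside the $M'$-neighborhood of $\hat U\cap\hat X$. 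Exploiting the injectivity of $\pi|_{\hat U}$ (since $U$ is inessential) and the compactness of $\T^2$ then yields a bound on $\diam(\hat U)$ independent of $U$, contradicting $\diamup(U_n)\to\infty$ and completing the proof.
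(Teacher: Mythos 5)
Your overall skeleton matches the paper's (dispose of the essential-fixed-set case, reduce to totally disconnected $\fix(f)$, build a dynamically transverse foliation via Jaulent/Le Calvez, prove it is gradient-like, then use the linking machinery to bound invariant disks), but several of the steps you rely on either rest on false claims or are left as acknowledged sketches, and these are precisely where the real work lies. First, in the fully essential case you assert that an inessential open set $D$ has bounded lift $\hat D$; this is false (the Denjoy-suspension example in the introduction has an unbounded inessential invariant disk), so your ``$v\neq 0$ forces annularity, hence $v=0$, hence irrotational'' argument collapses — irrotationality must instead be proved dynamically, e.g.\ via an extremal point of $\rho(\hat f)$, an ergodic measure, and a recurrent point lying in an invariant component of $\R^2\sm\fix(\hat f)$ that projects injectively. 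Likewise, an essential component of $\fix(f)$ need not contain any essential simple closed curve (Warsaw-circle-type continua), so your strip argument there is not valid; the correct route is that an essential but not fully essential $\fix(f)$ forces an annular invariant component of the complement. Second, your reduction to totally disconnected $\fix(f)$ asserts that the $h(U_n)$ ``remain $f'$-invariant disks''; but if $U_n$ meets a component of $K=\operatorname{Fill}(\fix(f))$ without containing it, $h(U_n)$ is not even open, and the components of $U_n\sm\fix(\hat f)$ could a priori all be small even though $\diamup(U_n)\to\infty$. One genuinely needs to first produce large connected invariant sets \emph{disjoint} from $\fix(\hat f)$ — this is done in the paper by a displacement/pigeonhole argument along an essential loop in $\T^2\sm\fix(f)$, and nothing in your proposal substitutes for it.

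Third, the gradient-likeness of $\mc F$ is the heart of the proof and you explicitly defer it (``the subtlety lies in converting the half-plane homological constraint into a concrete invariant strip''); the actual argument requires first showing $\diam(\proj_v(U_n))\to\infty$ for \emph{every} $v\in\Z^2_*$ (using non-annularity and Proposition \ref{pro:wall-annular}) and then a pigeonhole construction of positively transverse arcs from some $x$ to $x+w$ and to $x-w$ with $w\notin\R v$, feeding Remark \ref{rem:zerohull}; your proposed contradiction scheme is not carried out. Finally, your closing step does not bound anything: knowing that $\cl(\hat U)$ lies in the $M'$-neighborhood of $\hat U\cap\hat X$ is vacuous as a diameter bound, since $\hat U\cap\hat X$ is a subset of $\hat U$ and can be as large as $\hat U$ itself; moreover your $v\neq 0$ branch presupposes $\diam(\hat U)<\infty$, which is exactly what is unproved, so unbounded invariant disks are never excluded. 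The paper instead replaces $U$ by a Zorn-maximal invariant disk, forms the saturation $W$ of $U$ by the leaves meeting it, shows $\pi(\bigcup_n\hat f^n(W))$ is inessential (using unboundedness of projections, the leaf-endpoint property from Proposition \ref{pro:inter-endpoint}, and — in the bounded case — three pairwise disjoint large disks), deduces from maximality that every leaf meeting $U$ lies in $U$, hence $\bd U\subset\hat X$, contradicting total disconnectedness. These missing arguments are not routine details; as it stands the proposal does not prove the theorem.
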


Let us outline the steps of the proof of Theorem \ref{th:bdfix}. First we use the fact that $f$ is non-annular to show that if $\fix(f)$ is essential, then it is fully essential, and case $(2)$ holds. Next, assuming the theorem does not hold, we show that it suffices to consider the case where $\fix(f)$ is totally disconnected, by collapsing the components of the filling of $\fix(f)$. For such $f$, and assuming that there are arbitrarily `large' invariant open topological disks, we show that there is a gradient-like Brouwer foliation associated to a lift $\hat{f}$ of $f$. Then we show that the invariant topological disks are bounded, as follows: if there is an unbounded invariant topological disk $U$, using the linking number defined in \S\ref{sec:linking}, and more specifically Proposition \ref{pro:inter-endpoint}, we have that every leaf of the foliation that intersects $\ol{U}$ has an endpoint in $U$. Using this fact and a geometric argument relying on the fact that $f$ is non-annular, we are able to conclude that the boundary of $U$ consists of singularities of the foliation (contradicting the fact that the set of singularities is totally disconnected). After this, we are able to obtain a sequence of pairwise disjoint bounded simply connected invariant sets with increasingly large diameter, and a variation of the previous argument leads again a contradiction.

\subsection{The case where $\fix(f)$ is essential}

\begin{proposition}\label{pro:fix-ess-irrotational} Under the hypotheses of Theorem \ref{th:bdfix}, suppose $\fix(f)$ is essential. Then $\fix(f)$ is fully essential, and there is a lift $\hat{f}$ of $f$ such that $\pi(\fix{\hat{f}})=\fix(f)$. Moreover, $\rho(\hat{f})=\{0\}$ (\ie $f$ is irrotational).
\end{proposition}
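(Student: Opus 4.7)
The plan is to prove the three conclusions in the order stated, the third (vanishing of the rotation set) being the main technical step, which rests on producing from the fully essential component of $\fix(f)$ two independent families of $\hat f$-invariant barriers in $\R^2$.

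First, $\fix(f)$ is fully essential: the open $f$-invariant set $U=\T^2\setminus \fix(f)$ has each of its components $f$-invariant by Theorem \ref{th:brown-kister}; if any such component were annular, Proposition \ref{pro:annular}(\ref{pro:annular2}) would force $f$ to be annular, so by Proposition \ref{pro:essential-set}(3) $U$ is either inessential or fully essential, and the latter is ruled out by the hypothesis that $\fix(f)$ is essential. Next, fix any lift $\hat f_0$ of $f$ and define $v(x)=\hat f_0(\hat x)-\hat x\in \Z^2$ for $x\in \fix(f)$ and $\hat x\in \pi^{-1}(x)$; this is independent of the choice of $\hat x$ (as $\hat f_0$ commutes with integer translations) and locally constant in $x$. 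By Proposition \ref{pro:essential-set}(1), $\fix(f)$ has a unique fully essential component $C_0$; let $v_0$ be the constant value of $v$ on $C_0$ and set $\hat f=\hat f_0-v_0$, so that $\pi^{-1}(C_0)\subset \fix(\hat f)$.

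The heart of the proof is to show $\rho(\hat f)=\{0\}$. Once this is established, for every $x\in \fix(f)$ we get $v(x)\in \rho(\hat f)=\{0\}$ (since $\hat f^n(\hat x)-\hat x=nv(x)$), and hence $\pi^{-1}(\fix(f))\subset\fix(\hat f)$, giving $\pi(\fix(\hat f))=\fix(f)$ as required. To prove the vanishing, I would use that $C_0$ is closed, connected, and fully essential to find two essential loops $\gamma_1,\gamma_2\subset C_0$ whose homology classes $v_1,v_2\in \Z^2$ are primitive and form a $\Z$-basis of $H_1(\T^2,\Z)$, chosen with simple representatives. Lifting $\gamma_i$ to an arc $\hat\gamma_i$ in $\R^2$ from $\hat x_i$ to $\hat x_i+v_i$ and concatenating with its integer translates produces a properly embedded topological line
\[
\Gamma_i \;=\; \bigcup_{k\in\Z}\bigl(\hat\gamma_i+kv_i\bigr) \;\subset\; \pi^{-1}(C_0)\;\subset\; \fix(\hat f)
\]
contained in a strip of bounded width in the $v_i^\perp$-direction. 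The $\Z^2$-translates of $\Gamma_i$ form a family of pairwise parallel pointwise-fixed topological lines spaced $1/\norm{v_i}$ apart in the $v_i^\perp$-direction, and because $\hat f$ is orientation-preserving and fixes each such line, it preserves each of the two half-planes into which that line cuts $\R^2$. Every $\hat f$-orbit is therefore trapped between two consecutive translates of $\Gamma_i$, yielding a bound on $\lvert\proj_{v_i^\perp}(\hat f^n(\hat x)-\hat x)\rvert$ that is uniform in $n$ and $\hat x$. Consequently $\proj_{v_i^\perp}(v)=0$ for every $v\in \rho(\hat f)$, and taking $i=1,2$ with $v_1,v_2$ linearly independent forces $\rho(\hat f)\subset \R v_1\cap \R v_2=\{0\}$.

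The delicate point in this plan is the existence of simple essential loops $\gamma_i$ inside $C_0$, which \emph{a priori} may be a fairly wild continuum. I would address this by first applying Proposition \ref{pro:collapse} to the filled closed inessential $f$-invariant subparts of $\fix(f)$ to reduce to the case where $C_0$ is a tame fully essential continuum in which simple loops of each primitive homology class are available; alternatively, one can drop simplicity and instead use the bounded-width property of $\Gamma_i$ together with Proposition \ref{pro:wall-annular}, which rules out orbits escaping in the $v_i^\perp$-direction through an $\hat f$-invariant wall on pain of contradicting the non-annular hypothesis, yielding the same uniform bound.
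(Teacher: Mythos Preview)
Your first two steps are fine and match the paper's argument essentially verbatim. The genuine problem is in the irrotationality step. The fully essential component $C_0$ of $\fix(f)$ is a continuum, but there is no reason whatsoever for it to be \emph{path}-connected; it could be hereditarily indecomposable, in which case it contains no nondegenerate arcs at all, let alone simple essential loops. Your construction of the barriers $\Gamma_i$ presupposes the existence of loops $\gamma_i\subset C_0$, so it does not get off the ground in general. Neither proposed fix repairs this: Proposition~\ref{pro:collapse} collapses a compact \emph{inessential} invariant set to a totally disconnected one, which does nothing to tame the essential component $C_0$; and the alternative via Proposition~\ref{pro:wall-annular} still needs the lifted arcs $\Gamma_i$, hence still needs paths in $C_0$.

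The paper avoids this obstacle entirely by a recurrence/ergodic argument. Assuming $\rho(\hat f)\neq\{0\}$, take a nonzero extremal point $w$, realize it by an ergodic measure (Proposition~\ref{pro:rotation-set}), and pick a $\mu$-generic recurrent point $x$ with $(\hat f^n(\hat x)-\hat x)/n\to w$. The component $U$ of $\R^2\setminus\fix(\hat f)$ containing $\hat x$ is $\hat f$-invariant by Theorem~\ref{th:brown-kister}, and since $\pi(U)$ is disjoint from the fully essential set $K=C_0$, it is inessential, so $\pi|_U$ is injective. Recurrence of $x$ in $\T^2$ therefore lifts to recurrence of $\hat x$ in $\R^2$, forcing $(\hat f^{n_k}(\hat x)-\hat x)/n_k\to 0$, a contradiction. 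Note that the only topological input here is the \emph{inessentiality of the complement} of $C_0$, not any internal structure of $C_0$ itself; this is exactly what your barrier argument fails to exploit.
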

\begin{proof}
If $\fix(f)$ is essential but not fully essential, then $\T^2\sm \fix(f)$ is essential, and so it has some essential connected component $A$. The fact that $\fix(f)$ is essential implies that $A$ is not fully essential, and so it must be annular. Since connected components of $\T^2\sm \fix(f)$ are permuted by $f$, $A$ is a fixed annular set for $f^k$ for some $k>0$, and so $f^k$ is annular by Proposition \ref{pro:annular}. Moreover, since $f$ has a fixed point, by the same Proposition we conclude that $f$ is annular. This contradicts our hypothesis.

Thus $\fix(f)$ is fully essential, and there is some fully essential connected component $K$ of $\fix(f)$. Fix $z_0\in K$ and let $\hat{f}$ be a lift of $f$ such that $\hat{f}(\hat{z}_0)=\hat{z}_0$ for any $\hat{z}_0\in \pi^{-1}(z_0)$. We claim that $\pi^{-1}(K)\subset \fix(\hat{f})$. Indeed, the map defined by $z\mapsto \hat{f}(\hat{z})-\hat{z}$ for $\hat{z}\in \pi^{-1}(z)$ is well defined on $\T^2$ and continuous, and it takes integer values on $K$.  Since it is null at $z_0$ and $K$ is connected, it must be constantly zero on $K$. Thus $K\subset \pi(\fix(\hat{f}))$ (so $\fix(f)$ is fully essential).

Let us prove that $f$ is irrotational. Suppose for contradiction that $\rho(\hat{f})\neq \{0\}$. Then $\rho(\hat{f})$ has some nonzero extremal point $w$, and so by Proposition \ref{pro:rotation-set}, there is an $f$-ergodic Borel probability measure $\mu$ on $\T^2$ such that $\mu$-almost every point $x\in \T^2$ is such that, if $\hat{x}\in \pi^{-1}(x)$, then
$$\lim_{n\to \infty} \frac{\hat{f}^n(\hat{x})-\hat{x}}{n}= w.$$ 
By Poincar\'e recurrence, we may choose a recurrent $x\in \T^2$ such that the above condition holds. Let $\hat{x}\in \pi^{-1}(x)$ and let $U$ be the connected component of $\R^2\sm \fix(\hat{f})$ that contains $\hat{x}$. From Theorem \ref{th:brown-kister} we know that $\hat{f}(U)=U$. Moreover, since $\pi(U)$ is disjoint from $K$, which is fully essential, we have that $\pi(U)$ is inessential, so $\pi|_U$ is injective. 
Since $x$ is recurrent, there is a sequence $(n_k)_{k\in \N}$ of integers with $\lim_{k\to \infty} n_k=\infty$ such that $f^{n_k}(x)\to x$ as $k\to \infty$. Since $\pi|_U$ is injective, it conjugates $\hat{f}|_{U}$ to $f|_{\pi(U)}$.  In particular, $\hat{f}^{n_k}(\hat{x})\to \hat{x}$ as $k\to \infty$. But then $(\hat{f}^{n_k}(\hat{x})-\hat{x})/n_k\to 0\neq w$ as $k\to \infty$, contradicting our choice of $x$. This shows that $f$ is irrotational.

The claim that $\pi(\fix(\hat{f}))=\fix(f)$ follows from the fact that $f$ is irrotational.
\end{proof}

\begin{proposition} Under the hypotheses of Theorem \ref{th:bdfix}, if $\fix(f)$ is essential, then it is fully essential, and for each $M>0$ and $v\in \Z^2_*$ there is some connected component $U$ of $\T^2\sm \fix(f)$ such that $\diamup_v(U)>M$.
\end{proposition}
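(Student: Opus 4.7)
The first assertion, that $\fix(f)$ is fully essential, is already contained in the previous proposition. Thus the content of the statement is the lower bound on diameters of complementary components, and my plan is to argue it by contradiction: assume there exist $M>0$ and $v\in \Z^2_*$ such that $\diamup_v(U)\leq M$ for every connected component $U$ of $\T^2\sm\fix(f)$, and deduce that $f$ must be annular, contradicting the standing hypothesis of Theorem \ref{th:bdfix}.

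The key reduction comes from the previous proposition, which provides a lift $\hat f$ of $f$ with $\pi(\fix(\hat f))=\fix(f)$. In particular, $\pi^{-1}(\T^2\sm\fix(f)) = \R^2\sm\fix(\hat f)$, so the connected components of $\R^2\sm\fix(\hat f)$ are precisely the connected components of $\pi^{-1}(U)$ as $U$ ranges over the components of $\T^2\sm\fix(f)$. For any such component $\hat U$ the definition of $\diamup_v$ gives $\diam(\proj_v(\hat U))=\diamup_v(U)\leq M$. Moreover, Theorem \ref{th:brown-kister} (Brown--Kister) applied to the orientation-preserving homeomorphism $\hat f$ of the surface $\R^2\sm\fix(\hat f)$ ensures that each $\hat U$ is $\hat f$-invariant.

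Combining these two facts produces a uniform annular bound. For $x\in \fix(\hat f)$ the deviation $\proj_v(\hat f^n(x)-x)$ vanishes identically; for $x\in \R^2\sm\fix(\hat f)$, the whole orbit $\{\hat f^n(x):n\in\Z\}$ lies in a single component $\hat U$ of diameter (in the $v$-direction) at most $M$, so $\abs{\proj_v(\hat f^n(x)-x)}\leq M$ for every $n$. Setting $w=v^\perp\in \Z^2_*$ and noting that $w^\perp=-v$, this is exactly the statement that $f$ is annular in the direction $w$, contradicting the non-annularity hypothesis.

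There is essentially no serious obstacle: the argument is a direct reading of the assumed diameter bound as an annularity bound. The only points that require care are (i)\ ensuring that the relevant invariant components on which the orbits live are the components of $\R^2\sm\fix(\hat f)$, which is why choosing the lift provided by the previous proposition is important, and (ii)\ invoking Brown--Kister to guarantee orbit invariance of those components. Once these are in place, the contradiction with $f$ being non-annular is immediate.
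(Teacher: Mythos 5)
Your proposal is correct and follows essentially the same route as the paper: use the lift $\hat f$ from the previous proposition (so $\fix(\hat f)=\pi^{-1}(\fix(f))$), invoke Brown--Kister to get $\hat f$-invariance of the components of $\R^2\sm\fix(\hat f)$, and turn a uniform bound on their ($v$-projected) diameters into a uniform bound on the deviations of $\hat f$-orbits, contradicting non-annularity. The only difference is that you carry out the direction-specific bookkeeping (bounding $\proj_v(\hat f^n(x)-x)$ and identifying the annularity direction as $v^\perp$) which the paper's terse proof leaves implicit; this is a harmless refinement, not a different argument.
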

\begin{proof} The previous proposition implies that $\fix(f)$ is fully essential and $\fix(\hat{f})=\pi^{-1}(\fix(f))$. Each connected component of $\R^2\sm \fix(\hat{f})$ is $\hat{f}$-invariant. If there is a uniform bound on the diameter of such components, then one has a uniform bound on $|\hat{f}^n(z)-z|$ for $z\in \R^2$, $n\in \Z$, contradicting the fact that $f$ is non-annular.
\end{proof}

\subsection{The case where $\fix(f)$ is inessential}
\setcounter{claim}{0}
To complete the proof of Theorem \ref{th:bdfix} we will assume from now on that the theorem does not hold, and we will seek a contradiction. Thus we assume that there exists $f$ such that the hypotheses of the theorem hold but the thesis does not. The previous two propositions imply that $\fix{f}$ is essential if and only if case (2) of the theorem holds. Therefore, we may assume that $\fix{f}$ is inessential and item (1) does not hold. This means that for any $M$ there exists an open connected $f$-invariant topological disk $U$ such that $\diam(U)>M$. 

\subsection{Fixing a lift $\hat{f}$}

\begin{claim} There is a lift $\hat{f}$ of $f$ and a sequence $(U_n)_{n\in \Z}$ of open $\hat{f}$-invariant topological disks in $\R^2$ such that $\pi(U_n)$ is inessential and $\diam(U_n)\to \infty$ as $n\to \infty$.
\end{claim}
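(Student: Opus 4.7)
The plan is as follows. Using the standing hypothesis that item $(1)$ fails, for each $n\in\N$ pick an open connected $f$-invariant topological disk $V_n\subset\T^2$ with $\diamup(V_n)>n$. Since $V_n$ is a topological disk in $\T^2$, it is simply connected and hence evenly covered by $\pi$: any connected component $\hat V_n$ of $\pi^{-1}(V_n)$ is an open topological disk in $\R^2$ mapped homeomorphically onto $V_n$, with $\diam(\hat V_n)=\diamup(V_n)>n$. Fix once and for all a lift $\hat f_0$ of $f$, and set $K:=\sup_{x\in\R^2}\norm{\hat f_0(x)-x}$, which is finite because $\hat f_0-\id$ is $\Z^2$-periodic. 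Since $\hat f_0(\hat V_n)$ is again a component of $\pi^{-1}(V_n)$, there is a unique $v_n\in\Z^2$ with $\hat f_0(\hat V_n)=\hat V_n+v_n$, and this $v_n$ is independent of the particular component chosen. The strategy is to show $\{v_n\}$ is uniformly bounded in $\Z^2$, extract a subsequence on which $v_n$ takes a single value $v^*$, and then conclude with the lift $\hat f:=\hat f_0-v^*$ and $U_n:=\hat V_n$.

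The core step is the uniform bound $\norm{v_n}\leq K$. Since $V_n$ is homeomorphic to $\R^2$, $f|_{V_n}$ is an orientation-preserving self-homeomorphism; and $f|_{V_n}$ is nonwandering because $V_n$ is open and $f$-invariant in $\T^2$. Brouwer's lemma (Proposition~\ref{pro:brouwer-trivial}) then produces a fixed point $x_n\in V_n$ of $f$. Lifting $x_n$ to $\hat x_n\in\hat V_n$, one has $\hat f_0(\hat x_n)=\hat x_n+w_n$ for some $w_n\in\Z^2$ with $\norm{w_n}\leq K$. On the other hand, $\hat x_n\in\hat V_n$ gives $\hat f_0(\hat x_n)\in\hat V_n+v_n$, so $\hat x_n+(w_n-v_n)\in\hat V_n$ as well; the injectivity of $\pi|_{\hat V_n}$ then forces $w_n=v_n$, and hence $\norm{v_n}\leq K$ for every $n$.

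Because $\{v\in\Z^2:\norm{v}\leq K\}$ is finite, pigeonhole yields $v^*\in\Z^2$ with $v_n=v^*$ for infinitely many $n$. Passing to this subsequence and relabelling, set $\hat f:=\hat f_0-v^*$ and $U_n:=\hat V_n$. Then $\hat f(U_n)=\hat f_0(U_n)-v^*=(U_n+v^*)-v^*=U_n$, each $U_n$ is an open topological disk in $\R^2$, $\pi(U_n)=V_n$ is inessential (being a topological disk in $\T^2$), and $\diam(U_n)=\diamup(V_n)\to\infty$; if the index set $\Z$ is insisted on, one extends $U_n$ arbitrarily for $n\leq 0$. The main obstacle is the middle step: converting the a priori large translation $v_n$ of a big invariant disk into the bounded translation number $w_n$ of a fixed point inside it. This bridge is built from Brouwer's lemma (giving the fixed point in $V_n$) together with the injectivity of $\pi|_{\hat V_n}$ (coming from $V_n$ being a disk in $\T^2$); once made, the uniform bound and the pigeonhole selection of a single lift are routine.
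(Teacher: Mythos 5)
Your proof is correct and follows essentially the same route as the paper: the paper also gets a fixed point inside each large invariant disk via Proposition \ref{pro:brouwer-trivial} and then uses that only finitely many lifts of $f$ can have fixed points (your bound $\norm{v_n}\leq K$ via injectivity of $\pi$ on a component is exactly the justification of that fact), concluding by pigeonhole. Your write-up just makes these steps explicit.
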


\begin{proof} There are $f$-invariant topological disks of arbitrarily large diameter, and each contains a fixed point of $f$ by Proposition \ref{pro:brouwer-trivial}. The claim follows from the fact that only finitely many lifts of ${f}$ may have fixed points.
\end{proof}

From now on we will work with the lift $\hat{f}$ and the sequence $(U_n)_{n\in \N}$ from the previous claim. 

\begin{claim}\label{claim:bdfix-nw} $U_n+v\subset \Omega(\hat{f})$ for all $n\in \N$ and $v\in \Z^2$.
\end{claim}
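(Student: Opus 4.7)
The plan is to promote the inessentiality of $\pi(U_n)$ to an injectivity statement for $\pi|_{U_n}$, obtain a topological conjugacy between $\hat{f}$ restricted to each translate $U_n+v$ and $f$ restricted to $\pi(U_n)$, and then transfer the nonwandering hypothesis directly.

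By Proposition~\ref{pro:essential-set}(6), applied to the inessential open connected set $\pi(U_n)$, any connected component $C$ of $\pi^{-1}(\pi(U_n))$ satisfies $C\cap(C+w)=\emptyset$ for every $w\in\Z^2_*$. Since $U_n$ is connected, $U_n\subset C$ for some such component, and since $\pi^{-1}(\pi(U_n))=\bigcup_{w\in\Z^2}(U_n+w)$, the disjointness $C\cap(C+w)=\emptyset$ forces $C=U_n$. Hence $\pi|_{U_n}$ is a homeomorphism onto $\pi(U_n)$, and by $\Z^2$-equivariance the same holds for each translate $U_n+v$. Because $\hat{f}$ commutes with integer translations and preserves $U_n$, each $U_n+v$ is also $\hat{f}$-invariant, and the homeomorphism $\varphi := \pi|_{U_n+v}\colon U_n+v\to\pi(U_n)$ conjugates $\hat{f}|_{U_n+v}$ to $f|_{\pi(U_n)}$.

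Given $\hat{x}\in U_n+v$ and an arbitrary neighborhood $W$ of $\hat{x}$, I would shrink $W$ so that $W\subset U_n+v$, set $V=\varphi(W)$, and invoke the nonwandering hypothesis on $f$ to obtain $k>0$ and $y\in V$ with $f^k(y)\in V$; its $\varphi$-preimage then lies in $W\cap\hat{f}^{-k}(W)$, showing $\hat{x}\in\Omega(\hat{f})$. There is no substantive obstacle here: the entire argument rests only on the injectivity of $\pi|_{U_n+v}$, which is handed to us for free by inessentiality.
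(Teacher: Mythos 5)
Your proof is correct and follows essentially the same route as the paper's: inessentiality of $\pi(U_n)$ makes $\pi|_{U_n+v}$ a homeomorphism onto its image, which conjugates $\hat{f}|_{U_n+v}$ to the nonwandering map $f|_{\pi(U_n)}$, so every point of $U_n+v$ is nonwandering for $\hat{f}$. You merely spell out the injectivity step via Proposition~\ref{pro:essential-set}(6), which the paper leaves implicit.
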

\begin{proof} Since $\pi(U_n+v) = \pi(U_n)$ is inessential, $\pi|_{U_n+v}$ is a homeomorphism onto its image which conjugates $\hat{f}|_{U_n+v}$ to $f|_{\pi(U_n)}$. Since the latter is nonwandering, so is $\hat{f}|_{U_n+v}$, implying that $U_n+v\subset \Omega(\hat{f})$.
\end{proof}

\subsection{Simplification of $\fix(f)$}

We will show that it is possible to assume that $\fix(f)$ is totally disconnected, by collapsing the connected components of $\fill(\fix(f))$ to points, while keeping all the hypothesis. To do so, we need to rule out the possibility that this process leads to a situation where there are no longer arbitrarily large simply connected sets.

\begin{claim} For each $M\in \R$ there is an open connected $\hat{f}$-invariant set $U\subset \R^2\sm \fix(\hat{f})$ such that $\pi(U)$ is inessential and $\diam(U)>M$.
\end{claim}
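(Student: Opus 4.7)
My plan is to take $U_n$ of large diameter and extract the desired $U$ as a connected component of $V_n := U_n \setminus \fix(\hat{f})$. Because $U_n$ and $\fix(\hat{f})$ are both $\hat{f}$-invariant, so is $V_n$, and Theorem~\ref{th:brown-kister} applied to the orientation-preserving homeomorphism $\hat{f}|_{U_n}$ of the orientable surface $U_n$ shows that each connected component of $V_n$ is $\hat{f}$-invariant. Since $\pi(U_n)$ is inessential, $\pi|_{U_n}$ is a homeomorphism onto its image, so for every component $V$ of $V_n$, $\pi(V) \subset \pi(U_n)$ is inessential. Consequently, any connected component of $V_n$ is an open connected $\hat{f}$-invariant subset of $\R^2 \setminus \fix(\hat{f})$ with inessential projection, and hence satisfies every property required of $U$ except, possibly, the diameter bound.

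To secure the diameter bound I would first apply Proposition~\ref{pro:inessential-bound} to the closed inessential set $\fix(f)$ to obtain a constant $D$ such that each connected component of $\fix(f)$ has $\diamup \le D$; in particular each connected component of $\fix(\hat{f})$ has diameter at most $D$ in $\R^{2}$. Then I would choose $n$ so that $\diam(U_n)$ is sufficiently large compared to $M$ and $D$, and argue that some component of $V_n$ has diameter greater than $M$.

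The key to this argument is that every connected component $V$ of $V_n$ is $\hat{f}$-invariant, open, nonwandering (by Claim~\ref{claim:bdfix-nw}), and disjoint from $\fix(\hat{f})$; Brouwer's lemma (Proposition~\ref{pro:brouwer-trivial}) therefore forces $V$ to be non-simply-connected, so each such $V$ must ``surround'' at least one connected component of $\fix(\hat{f}) \cap U_n$. Together with the uniform bound $D$ on the diameters of the components of $\fix(\hat{f}) \cap U_n$ (identified via the homeomorphism $\pi|_{U_n}$ with the components of $\fix(f) \cap \pi(U_n)$), this topological/dynamical constraint should prevent the components of $V_n$ from all having small diameter once $\diam(U_n)$ is large enough.

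The main obstacle is making this separation step rigorous: a general closed subset of a disk whose connected components have bounded diameter can still partition the disk into arbitrarily many small pieces (for instance, a dense grid of small closed disks), so the diameter bound alone does not suffice. The proof must exploit additional structure peculiar to our setting---the $\hat{f}$-invariance of each component of $V_n$ and its forced non-simple-connectedness, together with the simple-connectedness and $\hat f$-invariance of $U_n$ itself. A plausible route is to consider the filling of $\fix(\hat{f}) \cap U_n$ inside $U_n$, i.e.\ to enlarge $\fix(\hat{f}) \cap U_n$ by adjoining the relatively compact components of $V_n$, identify the ``outer'' components of the resulting complement in $U_n$, and show that at least one such outer component must carry the bulk of the diameter of $U_n$.
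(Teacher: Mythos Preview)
Your initial observations are correct: each connected component $V$ of $V_n=U_n\setminus\fix(\hat f)$ is open, $\hat f$-invariant (Theorem~\ref{th:brown-kister}), has inessential projection, is nonwandering (Claim~\ref{claim:bdfix-nw}), and cannot be simply connected by Proposition~\ref{pro:brouwer-trivial}. However, the gap you identify is real and not merely cosmetic. Knowing that the components of $\fix(\hat f)\cap U_n$ have diameter $\le D$ and that each component of $V_n$ is non-simply-connected does not by itself prevent $V_n$ from shattering into many small pieces; the ``filling'' idea you sketch does not obviously terminate in a single large outer component either, because the fillings of different components of $V_n$ can be nested in complicated ways. Your proposal stops at precisely the point where the real work begins.

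The paper does not try to control the components of $V_n$ directly. Instead it argues by contradiction, assuming every set $V$ as in the claim satisfies $\diam(V)\le M$, and extracts a dynamical consequence: if $\hat\gamma$ is a cross-cut of $U_n$ such that \emph{both} components $V_1,V_2$ of $U_n\setminus[\hat\gamma]$ have diameter $>M$, then neither $V_j$ can lie in a single component of $U_n\setminus\fix(\hat f)$, so each $V_j$ contains a fixed point of $\hat f$; together with $\hat f(U_n)=U_n$ this forces $\hat f([\hat\gamma])\cap[\hat\gamma]\neq\emptyset$. The paper then chooses a fixed-point-free essential loop $\gamma\subset\T^2\setminus\fix(f)$ (possible since $\fix(f)$ is inessential), and for large $n$ finds $m$ pairwise disjoint cross-cuts $\hat\gamma_1,\dots,\hat\gamma_m$ of $U_n$, all lying in $\pi^{-1}([\gamma])$ and each separating $U_n$ into two pieces of diameter $>M$. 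These project to $m$ disjoint subarcs of $\gamma$, each meeting its $f$-image; by pigeonhole one of them has diameter $<1/m$, contradicting $d(f(x),x)>1/m$ on $[\gamma]$ for $m$ large. The key mechanism---using the assumed bound to force fixed points on both sides of a cross-cut, hence an intersection $\hat f(\hat\gamma)\cap\hat\gamma$---is what your approach is missing.
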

\begin{proof} Let $\mc{U}$ be the family of all open connected inessential subsets of $\T^2\sm \pi(\fix(\hat{f}))$ which are the projection of an $\hat{f}$-invariant subset of $\R^2$. We want to show that $\sup_{V\in  \mc{U}}\diamup(V)=\infty$. Suppose for contradiction that $\diamup(V)\leq M$ for all $V\in \mc{U}$. 

Since $\diam(U_n)\to \infty$, we may find $v\in \Z^2_*$ such that $\diam(P_v(U_n))\to \infty$, and since we are assuming that $\fix(f)$ is inessential, there is a simple loop $\gamma\subset \T^2\sm \fix(f)$ with homology class $v^\perp$, so that $\gamma$ lifts to an arc $\hat{\gamma}$ joining a point $z_0$ to $z_0+v^\perp$ and disjoint from $\fix(\hat{f})$. 
To simplify the notation, we will assume that $v = (1,0)$ and $[\hat{\gamma}] = \{0\}\times[0,1]$. The general case is analogous (in fact we can reduce the general case to this case by conjugating $f$ by an appropriately chosen homeomorphism).

We will show that for any given $m>0$ we can find $m$ pairwise disjoint subarcs $\gamma_1,\dots,\gamma_m$ of $\gamma$ such that $[\gamma_i]\cap f([\gamma_i])\neq \emptyset$ for each $i\in \{1,\dots, m\}$. This is enough to complete the proof of the claim, because it leads to a contradiction as follows: Since $f$ has no fixed point in $\gamma$, we may choose $m$ such that $d(f(x),x))>1/m$ for each $x\in [\gamma]$. Since the arcs $\gamma_i$ are  pairwise disjoint and $\gamma$ has length $1$ (because we are assuming it is a vertical circle), one of the arcs $\gamma_i$ has diameter at most $1/m$. Since $f(\gamma_i)$ intersects $\gamma_i$, it follows that there is a point $x\in [\gamma_i]$ such that $d(f(x),x)\leq 1/m$, contradicting our choice of $m$. This contradiction completes the proof, assuming the existence of the arcs $\gamma_i$. We devote the rest of the proof to prove the existence of such arcs. 

Let $N_0\in \N$ be such that $N_0>M$, and denote $\Gamma=\{0\}\times \R$. If $m\in \N$ is fixed and $n$ is chosen large enough, then there is $i_0$ such that $U_n$ intersects $\Gamma+(N_0i,0)$ for each $i\in \{i_0,i_0+1,\dots, i_0+m+1\}$. Fix $i\in \N$ with $i_0< i \leq i_0+m$, and let $p_1\in U_n\cap (\Gamma+(N_0(i-1),0))$ and $p_2\in U_n\cap(\Gamma+(N_0(i+1),0))$. Then $p_1$ and $p_2$ are in different connected components of $U_n\sm (\Gamma+(N_0i,0))$. From this and from the fact that $U_n$ is a topological disk, it is easy to verify that there is a connected component $\hat{\gamma}_i$ of $U_n\cap (\Gamma+(N_0i,0))$ that separates $p_1$ from $p_2$ in $U_n$; that is, $U_n\sm [\hat{\gamma}_i]$ contains $p_1$ and $p_2$ in different connected components $V_1$ and $V_2$, respectively. Since $\hat{\gamma_i}$ is a cross-cut of $U_n$ (\ie a simple arc in $U_n$ joining two points of its boundary), we have $U_n\sm [\hat{\gamma}_i] = V_1\cup V_2$. Since $V_1\subset U_n$ intersects $\Gamma+(N_0(i-1),0)$ and has a point of $\Gamma+(N_0i,0)$ in its boundary, it follows that $\diam(V_1)\geq N_0>M$. Because of this, $V_1$ cannot be contained in $U_n\sm \fix(\hat{f})$: otherwise, the connected component of $U_n\sm \fix(\hat{f})$ that contains $V_1$ would be an element of $\mc{U}$ (since Theorem \ref{th:brown-kister} implies that it is $\hat{f}$-invariant), contradicting our assumption that $\diam(V)\leq M$ for all $V\in \mc{U}$. Hence, $V_1$ contains a fixed point of $\hat{f}$. Similarly, since $V_2\subset U_n$ intersects $\Gamma+(N_0(i+1),0)$ and its boundary intersects $\Gamma+(N_0i,0)$, we have $\diam(V_2)\geq N_0 > M$ and we conclude in the same way that $V_2$ contains a fixed point of $\hat{f}$.

Therefore $\hat{f}(V_1)\cap V_1\neq \emptyset$ and $\hat{f}(V_2)\cap V_2\neq \emptyset$, and since $V_1\cup V_2 = U_n\sm [\hat{\gamma}_i]$ and $\hat{f}|_{U_n}$ is nonwandering, it follows from these facts that $\hat{f}([\hat{\gamma}_i])\cap [\hat{\gamma}_i]\neq \emptyset$. To complete the proof, observe that the arcs $\hat{\gamma}_{i_0+1}, \dots, \hat{\gamma}_{i_0+m}$ thus obtained project to pairwise disjoint subarcs $\gamma_1,\dots, \gamma_m$ of $\gamma$, because they are pairwise disjoint subarcs of $\pi^{-1}([\gamma])\cap U_n$, and $U_n$ projects to $\T^2$ injectively. 
\end{proof}

\begin{claim}\label{claim:bdfix13} We may assume that $\fix(f)$ is totally disconnected.
\end{claim}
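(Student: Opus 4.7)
The plan is to apply Proposition \ref{pro:collapse} to the set $K = \fill(\fix(f))$, which is closed, $f$-invariant, filled, and inessential (since $\fix(f)$ is inessential and filling preserves inessentiality by Proposition \ref{pro:essential-set}). This yields a continuous surjection $h \colon \T^2 \to \T^2$ homotopic to the identity and a homeomorphism $f' \colon \T^2 \to \T^2$ with $hf = f'h$, such that $K' = h(K)$ is totally disconnected and $h|_{\T^2 \sm K}$ is a homeomorphism onto $\T^2 \sm K'$. I will verify that $f'$ satisfies every assumption we have been working under — homotopic to the identity, nonwandering, non-annular, with arbitrarily large invariant topological disks of inessential projection — and additionally has $\fix(f') \subset K'$ totally disconnected, so that $f$ may be replaced by $f'$ in the remainder of the argument.

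The routine transfers come first. $f'$ is homotopic to the identity because $h$ is and $hf = f'h$; non-wandering of $f'$ follows by applying non-wandering of $f$ to $h^{-1}(U')$ for any open nonempty $U' \subset \T^2$ and pushing forward by $h$. For non-annularity, I fix the lift $\hat{h}$ of $h$ commuting with integer translations (so $\hat{h} - \id$ is uniformly bounded, say by $C$) and lifts $\hat{f}, \hat{f}'$ of $f, f'$ with $\hat{h} \circ \hat{f} = \hat{f}' \circ \hat{h}$; were $f'$ annular, a uniform bound on the displacements of some lift of $f'$ in a direction $v^\perp$ would, via $\hat{h} \circ \hat{f}^n = (\hat{f}')^n \circ \hat{h}$ and boundedness of $\hat{h} - \id$, yield a uniform bound on the displacements of $\hat{f}$ in the same direction, contradicting non-annularity of $f$. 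Finally, any fixed point of $f'$ outside $K'$ would lift through the homeomorphism $h|_{\T^2 \sm K}$ to a fixed point of $f$ outside $K \supset \fix(f)$, which is impossible, so $\fix(f') \subset K'$ is totally disconnected.

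The heart of the reduction is transferring the large invariant topological disks $U_n$ (which are $\hat{f}$-invariant, with $\pi(U_n)$ inessential and $\diam(U_n) \to \infty$). I propose setting $U_n' := \R^2 \sm \hat{h}(\R^2 \sm U_n)$. Since $\hat{h}$ is proper, hence closed, $U_n'$ is open; the intertwining yields $\hat{f}'(U_n') = U_n'$; and $\pi(U_n')$ is inessential, because a point $y \in U_n' \cap (U_n' + v)$ with $v \in \Z^2_*$ would force $\hat{h}^{-1}(y) \subset U_n \cap (U_n + v) = \emptyset$ (using that $\hat{h}$ commutes with integer translations), impossible by surjectivity of $\hat{h}$. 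The bound $\|\hat{h} - \id\| \leq C$ guarantees $U_n'$ contains $\{x \in U_n : d(x, \partial U_n) > C\}$, so its diameter tends to infinity along with that of $U_n$.

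The main obstacle will be confirming that $U_n'$ — or rather an appropriate $\hat{f}'$-invariant connected component of it, containing for instance the image under $\hat{h}$ of a fixed point of $\hat{f}$ in $U_n$ (which exists by Proposition \ref{pro:brouwer-trivial}) — is simply connected, so as to qualify as a genuine topological disk. This rests on the cellular structure of the components of $\pi^{-1}(K)$: each such component is a bounded cellular continuum, and $\hat{h}$ is a monotone, cell-like near-homeomorphism (a uniform limit of homeomorphisms) by the construction underlying Proposition \ref{pro:collapse}. Approximating $\hat{h}$ by homeomorphisms $\hat{h}_\epsilon$ and controlling the images $\hat{h}_\epsilon(U_n)$, or alternatively verifying directly that loops in $U_n'$ pull back to null-homotopic loops in $U_n$ via the simply connected fibers of $\hat{h}$, should yield the simply connected topological type and complete the reduction.
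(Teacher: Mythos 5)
Your reductions of the routine hypotheses (nonwandering, homotopy class, non-annularity, $\fix(f')\subset h(K)$) are fine and essentially what the paper does. The genuine gap is in the transfer of the arbitrarily large invariant disks, which is exactly the delicate point here. First, your stated reason that $\diam(U_n')\to\infty$ is incorrect: you claim $U_n'=\R^2\sm\hat h(\R^2\sm U_n)$ contains $\{x\in U_n: d(x,\partial U_n)>C\}$ and that this set is large, but a disk of enormous diameter can have arbitrarily small inradius, so that set can be empty for every $n$. (What is true is $U_n'\supset \hat h(U_n\sm \hat K)$ with $\hat K=\pi^{-1}(\fill(\fix(f)))$, since points off $\hat K$ have singleton fibers; with a little work this does give a diameter lower bound.) Second, and more seriously, $U_n'$ need not be connected, and nothing in your argument prevents all of its connected components from having uniformly bounded diameter: if $U_n$ is a thin invariant disk threading through components of $K$ that it meets but does not contain, removing the offending fibers chops $U_n$ into small pieces, and the invariant component you propose to select (through the image of a fixed point) may stay bounded as $n\to\infty$. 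Your closing paragraph about cell-like near-homeomorphisms and simply connected fibers addresses only simple connectivity, which is the easy part (one can simply pass to $\fill$ of an inessential invariant connected open set); it does not address the survival of a single connected invariant set of large diameter under the collapse.

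This is precisely the difficulty the paper isolates and disposes of in the claim immediately preceding this one: for every $M$ there is an open connected $\hat f$-invariant set $U\subset\R^2\sm\fix(\hat f)$ with $\pi(U)$ inessential and $\diam(U)>M$ (proved by the vertical-circle/pigeonhole argument, using non-annularity and Brown--Kister). Granting that claim, one takes such sets $\hat V_n$; since $\diamup$ of the components of $K=\fill(\fix(f))$ is uniformly bounded (Proposition \ref{pro:inessential-bound}) and $\hat V_n$ is connected and disjoint from $\fix(\hat f)$, for large $n$ the sets $\pi(\hat V_n)$ avoid $K$ entirely, so $h$ carries them homeomorphically, preserving openness, connectedness, invariance and inessentiality, and losing at most $2\sup\norm{\hat h-\id}$ in diameter; then $\fill(h(V_n))$ are the required $f'$-invariant disks. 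So the correct route is to invoke (or reprove) that preceding claim rather than to push the original $U_n$ through the quotient; as written, your reduction does not go through.
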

\begin{proof} 
The previous claim implies that there exists a sequence $(\hat{V}_n)_{n\in \N}$ of open connected $\hat{f}$-invariant subsets of $\R^2\sm \fix(\hat{f})$ such that $\diam(\hat{V}_n)\to \infty$ as $n\to \infty$ and $V_n=\pi(\hat{V}_n)$ is inessential for each $n\in \N$. This implies that $V_n\subset \T^2\sm \fix(f)$, because the fact that $V_n$ projects injectively implies that any element of $\pi^{-1}(\fix(f))\cap V_n$ must be a fixed point of $\hat{f}$.

 Since $\fix(f)$ is inessential, so is $K=\fill(\fix(f))$. Moreover, by Proposition \ref{pro:inessential-bound} there is a uniform bound on $\diamup(C)$ among the connected components $C$ of $K$. Since $\diamup(V_n)\to \infty$, this implies that there is $n_0$ such that $V_n\subset \T^2\sm K$ if $n\geq n_0$. 

Proposition \ref{pro:collapse} implies that there is a continuous surjection $h\colon \T^2\to \T^2$ homotopic to the identity and a homeomorphism $f'\colon \T^2\to \T^2$ such that $hf = f'h$, and additionally $h(K)$ is totally disconnected ($h$ collapses components of $K$ to points) and $h|_{\T^2\sm K}$ is a homeomorphism onto $\T^2\sm h(K)$. Furthermore, since every component of $K$ contains a fixed point of $f$, and there are no fixed points outside $K$, it follows that $h(K)=\fix(f')$. The map $f'$ is clearly nonwandering, and the sets $(h(V_n))_{n\geq n_0}$ provide a sequence of simply connected open $f'$-invariant subsets of $\T^2\sm \fix(f')$. Moreover, since $h$ is homotopic to the identity, if $\hat{h}\colon \R^2\to \R^2$ is a lift of $h$ then there is a constant $M'$ such that $\norm{\smash{\hat{h}(x)-x}}<M'$ for all $x\in \R^2$. If $\hat{V}_n$ is a connected component of $\pi^{-1}(V_n)$, then $\hat{h}(\hat{V}_n)$ is a connected component of $\pi^{-1}(h(V_n))$ and 
$$\diamup(h(V_n))=\diam(\hat{h}(\hat{V}_n))\geq \diam(\hat{V}_n) -2M = \diamup(V_n)-2M \xrightarrow[n\to \infty]{} \infty.$$ 

Hence $\diamup(\fill(h(V_n)))\to \infty$ as $n\to \infty$, and since $\fill(h(V_n))$ is an $f'$-invariant topological disk, we have that $f'$ satisfies the hypotheses but not the thesis of the theorem. Thus, by working with $f'$ instead of $f$ since the beginning of the proof of the theorem, we may have assumed that $\fix(f)$ is totally disconnected.
\end{proof}

\subsection{Unboundedness in every direction of the sets $U_n$}

\begin{claim}\label{claim:bdfix5} $\diam(\proj_v(U_n)) \to \infty$ as $n\to \infty$ for each $v\in \Z^2_*$.
\end{claim}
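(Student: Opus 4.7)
The plan is to argue by contradiction, in the same spirit as the preceding claim but with the roles of the bounded and unbounded directions interchanged. Suppose there exist $v \in \Z^2_*$ and $C > 0$ such that $\diam(\proj_v(U_{n_k})) \leq C$ along some subsequence $(n_k)$. Since for orthogonal $v, v^\perp$ one has $\diam(U)^2 \leq \diam(\proj_v(U))^2 + \diam(\proj_{v^\perp}(U))^2$, and $\diam(U_{n_k}) \to \infty$, we obtain $\diam(\proj_{v^\perp}(U_{n_k})) \to \infty$. After replacing $v$ by a primitive multiple and conjugating $f$ by a homeomorphism of $\T^2$ induced by a matrix in $\GL(2,\Z)$ (which preserves all hypotheses and the statement we wish to prove), we may assume $v = (0,1)$ and $v^\perp = (1,0)$. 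Thus $U_{n_k} \subset \R \times [a_{n_k}, a_{n_k}+C]$ for some $a_{n_k}\in\R$, while $\diam(\proj_1(U_{n_k})) \to \infty$.

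Since $\fix(f)$ is totally disconnected by Claim \ref{claim:bdfix13}, and hence inessential, pick a simple loop $\gamma \subset \T^2 \sm \fix(f)$ of homology class $(0,1)$; by another conjugation we may take $\gamma$ to be the standard vertical circle, with lift $\hat\gamma = \{0\} \times [0,1]$ and $\Gamma = \{0\} \times \R$. Since $f$ has no fixed point on $\gamma$ and $[\gamma]$ is compact, choose $m \in \N$ with $d(f(x),x) > 1/m$ on $[\gamma]$, and fix an integer $N_0 > C$. For $k$ large enough that $\diam(\proj_1(U_{n_k})) > (m+2)N_0$, find $i_0$ such that $U_{n_k}$ meets $\Gamma + (N_0 i, 0)$ for each $i \in \{i_0,\dots,i_0+m+1\}$. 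For each $i \in \{i_0+1,\dots,i_0+m\}$, picking $p_1 \in U_{n_k}\cap(\Gamma+(N_0(i-1),0))$ and $p_2 \in U_{n_k}\cap(\Gamma+(N_0(i+1),0))$, extract a connected component $\hat\gamma_i$ of $U_{n_k}\cap(\Gamma+(N_0i,0))$ that separates $p_1$ from $p_2$ in $U_{n_k}$. Because $U_{n_k}$ lies in a strip of height $C$, each $\hat\gamma_i$ is a vertical open segment of length at most $C$, and the projections $\gamma_i = \pi(\hat\gamma_i)$ are pairwise disjoint subarcs of $\gamma$ (since $\pi$ is injective on the inessential set $U_{n_k}$).

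The decisive step is to deduce $\hat f(\hat\gamma_i) \cap \hat\gamma_i \neq \emptyset$ for each such $i$. Denote by $V_1, V_2$ the connected components of $U_{n_k} \sm [\hat\gamma_i]$, both of Euclidean diameter at least $N_0 > C$. If both contain a fixed point of $\hat f$, the nonwandering argument from the preceding claim (which only needs Claim \ref{claim:bdfix-nw}) gives the conclusion verbatim. Otherwise, say $V_1 \cap \fix(\hat f) = \emptyset$; then by Theorem \ref{th:brown-kister} the component $W$ of $V_1$ in $U_{n_k} \sm \fix(\hat f)$ is $\hat f$-invariant, contained in $U_{n_k}$ (so $\proj_v(W) \subset (a_{n_k}, a_{n_k}+C)$) with $\proj_1$-diameter at least $N_0$. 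Combining the $\Z^2$-periodicity of $\pi^{-1}(\pi(W))$ with the fact that it lies in a horizontal strip of height $C$, one can produce an $\hat f$-invariant open set satisfying the hypothesis of Proposition \ref{pro:wall-annular} and deduce that $f$ is annular, contradicting our standing hypothesis. Once $f(\gamma_i)\cap\gamma_i \neq \emptyset$ is established for each $i$, the finish is identical to the preceding claim: the $m$ pairwise disjoint subarcs $\gamma_i$ of a loop of length $1$ force some $\gamma_i$ to have diameter at most $1/m$, and a point $x \in \gamma_i$ with $f(x)\in\gamma_i$ then satisfies $d(f(x),x) \leq 1/m$, contradicting the choice of $m$. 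The principal obstacle is the key step of forcing $\hat f(\hat\gamma_i)\cap\hat\gamma_i\neq\emptyset$, because the uniform bound on $\mc{U}$ used in the previous claim is unavailable here; instead one must extract annularity from the strip confinement via Proposition \ref{pro:wall-annular}.
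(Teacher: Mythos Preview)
Your approach is quite different from the paper's, and it has a genuine gap at precisely the step you yourself flag as the principal obstacle.

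The paper's proof is short and direct. After passing to a subsequence with $\diam(\proj_v(U_n))\le M$ (and, implicitly, translating each $U_n$ by an integer multiple of $v$ so that $\proj_v(U_n)\subset[-M,M]$), one sets
\[
A=\ol{\bigcup_{k\in\Z}\bigcup_{n\in\N} (U_n+kv^\perp)}.
\]
Because $\diam(\proj_{v^\perp}(U_n))\to\infty$, for any compact arc $\sigma$ crossing the strip one can choose $n$ and $k$ so that a path in $U_n+kv^\perp$ spans the $v^\perp$-extent of $\sigma$; a crossing argument then shows $\sigma$ must meet $A$. Hence the half-planes $V^\pm=\proj_v^{-1}((\pm M,\pm\infty))$ lie in different components $\til V^\pm$ of $\R^2\sm A$, and since $A$ is $\hat f$-invariant and $\hat f-\id$ is bounded, $\til V^-$ is $\hat f$-invariant with $V^-\subset\til V^-\subset\proj_v^{-1}((-\infty,M))$. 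Proposition~\ref{pro:wall-annular} then gives annularity directly. No cross-cuts, no fixed-point dichotomy, no use of Claim~\ref{claim:bdfix13} or Theorem~\ref{th:brown-kister}.

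In your argument, the sentence ``combining the $\Z^2$-periodicity of $\pi^{-1}(\pi(W))$ with the fact that it lies in a horizontal strip of height $C$, one can produce an $\hat f$-invariant open set satisfying the hypothesis of Proposition~\ref{pro:wall-annular}'' does not go through. First, $\pi^{-1}(\pi(W))=\bigcup_{u\in\Z^2}(W+u)$ is \emph{not} contained in any horizontal strip: the vertical integer translates $W+(0,j)$ leave every strip of height $C$. Second, if instead you take only the $v^\perp$-translates $W'=\bigcup_{k\in\Z}(W+kv^\perp)$ (which \emph{is} $\hat f$-invariant and in the strip), there is no reason for $\ol{W'}$ to separate the plane. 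A single open connected inessential set $W$ in a horizontal strip with $\proj_1$-diameter $N_0>1$ may look like a thin diagonal sliver, so that $\ol W$ and $\ol{W+(1,0)}$ are disjoint compacta; then $\R^2\sm\ol{W'}$ is connected and Proposition~\ref{pro:wall-annular} cannot be invoked. What makes the paper's construction succeed is precisely that it uses the whole sequence $(U_n)$ with $\proj_{v^\perp}$-diameters tending to infinity, so that \emph{every} potential crossing path is blocked; a single $W$ of fixed diameter $\ge N_0$ cannot do this. Nor can you rescue the step via Proposition~\ref{pro:brouwer-trivial}: the component $W$ of $U_{n_k}\sm\fix(\hat f)$ need not be simply connected when $\fix(\hat f)$ is merely totally disconnected, so the Brouwer fixed-point argument is unavailable.

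In short, the cross-cut machinery imported from the previous claim is unnecessary here and leaves you needing exactly the separating-wall argument that the paper carries out directly with the $U_n$ themselves.
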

\begin{proof} 
Suppose the claim does not hold. Then, after passing to a subsequence of $(U_n)_{n\in \N}$, we may assume that there is $v\in \Z^2_*$ and a constant $M$ such that $\diam(\proj_v(\ol{U_n}))\leq M$ for all $n\in \N$.  
Let $$A=\ol{\bigcup_{k\in \Z}\bigcup_{n\in \N} U_n+kv^\perp}.$$ 
The fact that $\diam(U_n)\to \infty$ implies that the sets
$$V^-=\proj_v^{-1}((-\infty,-M))\text { and } V^+=\proj_v^{-1}((M, \infty))$$ are contained in different connected components of $\R^2\sm A$. Let us call these components $\til{V}^+$ and $\til{V}^-$, respectively. 

Note that since each $U_n$ is $\hat{f}$-invariant, we have $\hat{f}(A)=A$, and so the connected components of $\R^2\sm A$ are permuted by $\hat{f}$. The fact that $\hat{f}(x)-x$ is uniformly bounded implies that $\hat{f}(\til{V}^+)=\til{V}^+$ and $\hat{f}(\til{V}^-)=\til{V}^-$.  Since $V^-\subset \til{V}^-$ and $\til{V}^-$ is disjoint from $\til{V}^+\supset V^+$, we have  $$\proj_v^{-1}((-\infty,-M'))\subset \til{V}^-\subset \proj_v^{-1}((-\infty, M')),$$
and we conclude from Proposition \ref{pro:wall-annular} that $f$ is annular. This contradicts the hypothesis of the theorem, proving the claim.
\end{proof}

\subsection{Maximality and disjointness of $U_n$}

\begin{claim}\label{claim:bdfix7} We may assume that each $U_n$ is maximal with respect to the property of $\pi(U_n)$  being open, $f$-invariant and simply connected (\ie $U_n$ is not properly contained in a set with the same properties). 
\end{claim}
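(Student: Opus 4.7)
The plan is to apply Zorn's Lemma. For each $n\in \N$, let $\mc{U}_n$ be the family of all open connected $\hat{f}$-invariant sets $U\subset \R^2$ with $U_n\subset U$ and $\pi(U)$ simply connected, partially ordered by inclusion. This family is nonempty since $U_n\in \mc{U}_n$. Note that for any $U\in \mc{U}_n$, the simple connectivity of $\pi(U)$ forces $\pi|_U$ to be a homeomorphism onto $\pi(U)$, so $U$ is itself an open topological disk in $\R^2$ and $\pi(U)$ is automatically inessential and $f$-invariant (the latter because $\hat{f}$ lifts $f$).

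To verify the chain hypothesis of Zorn's Lemma, let $\{U_\alpha\}_{\alpha\in A}$ be a totally ordered subfamily and set $U_*=\bigcup_\alpha U_\alpha$. Then $U_*$ is open, connected (as a nested union of open connected sets containing $U_n$), and $\hat{f}$-invariant. Its projection $\pi(U_*)=\bigcup_\alpha \pi(U_\alpha)$ is a nested union of simply connected open subsets of $\T^2$, and a standard compactness argument shows it is itself simply connected: any loop in $\pi(U_*)$ has compact image, hence lies in some $\pi(U_\alpha)$, where it is null-homotopic. Therefore $U_*\in \mc{U}_n$, and Zorn's Lemma yields a maximal element $U_n'\supset U_n$ of $\mc{U}_n$.

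Replacing each $U_n$ by $U_n'$, the previously established properties all persist. Indeed, $U_n'$ is an $\hat{f}$-invariant open topological disk with $\pi(U_n')$ inessential; the inequalities $\diam(U_n')\geq \diam(U_n)\to \infty$ and $\diam(\proj_v(U_n'))\geq \diam(\proj_v(U_n))\to \infty$ for each $v\in \Z^2_*$ are immediate from $U_n'\supset U_n$ and Claim \ref{claim:bdfix5}; and the nonwandering inclusion $U_n'+w\subset \Omega(\hat{f})$ for every $w\in \Z^2$ follows exactly as in the proof of Claim \ref{claim:bdfix-nw}, since $\pi|_{U_n'+w}$ conjugates $\hat{f}|_{U_n'+w}$ to $f|_{\pi(U_n')}$, which is nonwandering as a restriction of $f$ to an invariant open subset of $\T^2$. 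The maximality of $U_n'$ in $\mc{U}_n$ then gives the parenthetical statement of the claim, since any strictly larger connected open set $U'$ with $\pi(U')$ simply connected, $f$-invariant, and open would lie in $\mc{U}_n$. The only non-routine point in the whole argument is the compactness step propagating simple connectivity through the chain; everything else is automatic.
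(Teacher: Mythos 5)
Your proof is correct and follows essentially the same route as the paper: a direct application of Zorn's Lemma, with the chain condition handled by the standard compactness argument for loops, and a check that the previously established properties (invariance, inessential projection, unboundedness, nonwandering) survive the replacement. The only cosmetic difference is that you run Zorn upstairs in $\R^2$ on sets with simply connected projection, whereas the paper runs it downstairs on open simply connected $f$-invariant subsets of $\T^2$ containing $\pi(U_n)$ and then lifts by taking the connected component of the preimage containing $U_n$; the two formulations are equivalent.
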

\begin{proof}
By a direct application of Zorn's Lemma, there exists an open simply connected $f$-invariant set $\til{U}_n'\subset \T^2$ such that $\pi(U_n)\subset \til{U}_n'$ and $\til{U}_n'$ is maximal with the property of being open, $f$-invariant, and simply connected. The connected component $\til{U}_n$ of $\pi^{-1}(\til{U}_n')$ that contains $U_n$ satisfies the required properties, so we may replace $U_n$ by $\til{U}_n$ for each $n\in \N$.
\end{proof}

\begin{claim} If $U_n$ and $U_m$ are bounded and $\pi(U_n)\cap \pi(U_m) \neq \emptyset$ then $\pi(U_n)=\pi(U_m)$.
\end{claim}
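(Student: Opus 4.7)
Let $A=\pi(U_n)$ and $B=\pi(U_m)$, and pick $V$ to be the connected component of $\pi^{-1}(B)$ meeting $U_n$, so $V=U_m+w$ for some $w\in\Z^2$ and $V$ inherits boundedness, $\hat{f}$-invariance, and simple-connectedness from $U_m$, with $\pi|_V$ a homeomorphism onto $B$. My plan is to show that $A\cup B$ is an open, $f$-invariant, simply connected subset of $\T^2$. Once this is achieved, applying the maximality from Claim~\ref{claim:bdfix7} to both $U_n$ and $U_m$ forces $A\cup B = A = B$, which is the desired conclusion.

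Since $A$ and $B$ are open and simply connected with $A\cap B \neq \emptyset$, van Kampen's theorem reduces the simple-connectedness of $A\cup B$ to the connectedness of $A\cap B$. Using that $\pi|_{U_n}$ is a homeomorphism onto $A$,
\[
A\cap B \;=\; \bigsqcup_{u\in \mathcal{U}} \pi\bigl(U_n \cap (U_m+u)\bigr), \qquad \mathcal{U} = \{u\in \Z^2 : U_n\cap (U_m+u)\neq\emptyset\},
\]
and $\mathcal{U}$ is finite by boundedness of $U_n$. If $A\cap B$ is disconnected, two scenarios arise.

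In Scenario I, $|\mathcal{U}|\geq 2$. Picking distinct $u_1,u_2\in\mathcal{U}$, the component $W$ of $\pi^{-1}(A\cup B)$ containing $U_n$ also contains $U_m+u_1$ and $U_m+u_2$. Translating by $u_1-u_2$ and chaining through $U_m+u_1$ shows $W\cap(W+(u_1-u_2))\neq\emptyset$, hence $W=W+(u_1-u_2)$; thus $W$ is unbounded and $\pi(W)=A\cup B$ is essential. By Proposition~\ref{pro:essential-set}(3), $A\cup B$ is either annular---in which case Proposition~\ref{pro:annular}(2) forces $f$ to be annular, contradicting the standing hypothesis---or fully essential. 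In Scenario II, $\mathcal{U}=\{w\}$ but $U_n\cap V$ is disconnected: a loop $\gamma$ generating $\pi_1(A\cup B)$ formed by a path in $A$ (through $U_n$) between two components of $U_n\cap V$ and a return path in $B$ (through $V$) lifts to a closed loop inside the bounded set $U_n\cup V$, so $\gamma$ is null-homotopic in $\T^2$ and bounds a disk $D\subset\T^2$. Then the bounded set $\fill_{\R^2}(U_n\cup V)$ is $\hat{f}$-invariant, simply connected in $\R^2$, and since $\mathcal{U}=\{w\}$ its projection is injective; the image is an open $f$-invariant simply connected subset of $\T^2$ strictly larger than $A$, contradicting the maximality of $U_n$ from Claim~\ref{claim:bdfix7}.

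The principal obstacle is eliminating the fully essential sub-case of Scenario I, where the stabilizer of $W$ in $\Z^2$ is forced to equal $\Z^2$ rather than a proper subgroup (which would already yield an annular contradiction). I anticipate handling this by using that each bounded lift $U_n+v$ contains an $\hat{f}$-fixed point (via Claim~\ref{claim:bdfix-nw} and Brouwer's Lemma, Proposition~\ref{pro:brouwer-trivial}) together with the uniform bound on bounded complementary components of $\pi^{-1}(A\cup B)$ (Proposition~\ref{pro:inessential-bound}), to extract an invariant annular configuration and contradict the non-annularity of $f$.
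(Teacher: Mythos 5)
Your reduction to the connectedness of $A\cap B$ and the two scenarios is reasonable, but Scenario I is not closed, and this is a genuine gap rather than a technicality. After deducing that $\pi(W)=A\cup B$ is essential you eliminate the annular alternative via Proposition \ref{pro:annular}, but the fully essential alternative is left to an ``anticipated'' argument that is never given, and it is not a vacuous case: a union of two bounded disks can a priori be fully essential (e.g.\ if $U_n$ meets $U_m+u_1$, $U_m+u_2$, $U_m+u_3$ with $u_1-u_2$ and $u_1-u_3$ linearly independent), and nothing proved up to this point excludes it. Your sketched fix (fixed points from Claim \ref{claim:bdfix-nw} and Proposition \ref{pro:brouwer-trivial}, plus Proposition \ref{pro:inessential-bound}) does not obviously produce anything to which the ``invariant annular set $\Rightarrow f$ annular'' mechanism applies, precisely because in the fully essential case no invariant annular open set is in sight. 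A secondary, fixable omission: in Scenario II the injectivity of $\pi$ on $\fill(U_n\cup V)$ does not follow merely from $\mathcal{U}=\{w\}$; one must argue that if $\fill(S)\cap(\fill(S)+u)\neq\emptyset$ with $S=U_n\cup V$ disjoint from all its nonzero translates, then $S+u$ lies in a bounded complementary component of $S$, whence $\fill(S)+u\subseteq\fill(S)$, impossible for a bounded set.

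The paper's proof avoids your case analysis altogether and, in doing so, shows how to close your gap. It sets $U=\fill\bigl(U_n\cup(U_m+w)\bigr)$, which is bounded, connected and $\hat{f}$-invariant. If $\pi(U)$ were essential there would be $v\in\Z^2_*$ with $U\cap(U+v)\neq\emptyset$, and then $V=\bigcup_{k\in\Z}(U+kv)$ is open, connected, $\hat{f}$-invariant, and $\proj_{v^\perp}(V)$ is bounded because $U$ itself is bounded; Proposition \ref{pro:wall-annular} then makes $f$ annular, a contradiction. Note that this uses only boundedness of $U$, so it makes no difference whether $\pi(U)$ would have been annular or fully essential. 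Hence $\pi(U)$ is an inessential, filled, connected, invariant open set, i.e.\ an invariant disk containing both $\pi(U_n)$ and $\pi(U_m)$, and the maximality of Claim \ref{claim:bdfix7} gives $\pi(U_n)=\pi(U)=\pi(U_m)$. The same ``bounded chain'' trick repairs your Scenario I directly: since $U_n$ meets both $U_m+u_1$ and $U_m+u_2$, the set $\bigcup_{k\in\Z}\bigl((U_n\cup(U_m+u_1))+k(u_1-u_2)\bigr)$ is open, connected, $\hat{f}$-invariant and bounded in the $(u_1-u_2)^\perp$ direction, so non-annularity is contradicted without ever splitting into annular versus fully essential.
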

\begin{proof}
If $\pi(U_n)\cap \pi(U_m)\neq \emptyset$, then there exists $w\in \Z^2$ such that $U_n\cap (U_m+w)\neq \emptyset$. Let $U=\fill(U_n\cup (U_m+w))$, which is bounded and $\hat{f}$-invariant. Suppose that $\pi(U)$ is essential. Then there is $v\in \Z^2_*$ such that $U\cap (U+v)\neq \emptyset$. Let $V= \bigcup_{k\in \Z} U+kv$. The fact that $U$ is bounded implies that $\diam \proj_{v^\perp}(V)<\infty$. However, $V$ is at the same time $\hat{f}$-invariant. Similar to previous cases, an application of Proposition \ref{pro:wall-annular} now shows that $\hat{f}$ is annular, contradicting the hypothesis of the theorem. Thus $\pi(U)$ is inessential,
and since $U$ is filled and connected, $\pi(U)$ is an open $f$-invariant topological disk which contains $\pi(U_n)$ and $\pi(U_m)$. It follows form the maximality of $\pi(U_n)$ and $\pi(U_m)$ that $\pi(U_n)=\pi(U)=\pi(U_m)$, as claimed.
\end{proof}

\begin{claim}\label{claim:bdfix14} We may assume that the disks $(\pi(U_n))_{n\in \N}$ are either pairwise disjoint or all equal to $\pi(U_0)$.
\end{claim}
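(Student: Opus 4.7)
My plan is to combine Ramsey's theorem with the maximality established in Claim \ref{claim:bdfix7}. On the vertex set $\N$, consider the graph whose edges are those pairs $\{n,m\}$ with $\pi(U_n)\cap \pi(U_m)\neq \emptyset$. Ramsey's theorem provides an infinite monochromatic subset $S$: either $S$ is independent, in which case the subsequence indexed by $S$ has pairwise disjoint projections (giving the first alternative of the claim), or $S$ is a clique, where every two projections intersect.

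Assume we are in the clique case and relabel so that the whole sequence $(U_n)_{n\in \N}$ has this property. For any two indices $n,m$ the set $W=\pi(U_n)\cup \pi(U_m)$ is open, connected and $f$-invariant. If $\fill(W)$ is inessential, then $\fill(W)$ is open, simply connected, $f$-invariant and contains both $\pi(U_n)$ and $\pi(U_m)$; the maximality of each of these then forces $\pi(U_n)=\fill(W)=\pi(U_m)$. Applying this with the fixed index $0$ and letting $m$ vary yields the second alternative, $\pi(U_m)=\pi(U_0)$ for every $m\in \N$.

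The remaining obstacle is to rule out the possibility that $\fill(W)$ is essential. I would lift to $\R^2$: pick $w\in \Z^2$ with $U_n\cap(U_m+w)\neq \emptyset$, put $U=\fill(U_n\cup(U_m+w))$, and suppose for contradiction that $\pi(U)$ is essential, so that $U\cap(U+v)\neq \emptyset$ for some $v\in \Z^2_*$ and $V=\bigcup_{k\in \Z}(U+kv)$ is $\hat{f}$-invariant. In the preceding claim, boundedness of $U$ forced $\proj_{v^\perp}(V)$ to be bounded and, via Proposition \ref{pro:wall-annular}, contradicted the non-annular hypothesis on $f$. The extra difficulty here is that $U$ may no longer be bounded; the key step I would need to establish is that $\proj_{v^\perp}(V)$ is still bounded, which I expect to follow from the inessentiality of $\pi(U_n)$ and $\pi(U_m)$ (forcing $U_n$ and $U_m+w$ to project injectively to $\T^2$ and to be disjoint from each of their own nontrivial $\Z^2$-translates) together with the maximality from Claim \ref{claim:bdfix7}. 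This is the step where I expect the argument to need the most care.
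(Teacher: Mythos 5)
There is a genuine gap, and it sits exactly where you flagged it. Your Ramsey reduction and the bounded case are fine (though Ramsey is unnecessary: since $\diam(U_n)\to\infty$, one can simply extract a subsequence with strictly increasing diameters, which by the preceding claim already have pairwise distinct, hence pairwise disjoint, projections). The problem is the clique case: there you try to prove that any two intersecting projections are equal, and the only obstruction is ruling out that $\fill(\pi(U_n)\cup\pi(U_m))$ is essential. Your hoped-for step --- that $\proj_{v^\perp}(V)$ is bounded because $\pi(U_n)$, $\pi(U_m)$ are inessential and the $U_n$ are maximal --- cannot be carried out. Inessentiality of the projection only says that $U_n$ is disjoint from its nontrivial integer translates; it gives no bound whatsoever on any directional projection of $U_n$. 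Worse, in the only case where your argument is actually needed (some $U_n$ unbounded --- if all are bounded the preceding unnumbered claim already applies), the argument of Claim \ref{claim:bdfix5}, applied to the constant sequence, shows that an unbounded invariant disk with inessential projection has \emph{unbounded} projection in every direction $v\in\Z^2_*$; hence $\proj_{v^\perp}(V)\supset\proj_{v^\perp}(U_n)$ is unbounded and Proposition \ref{pro:wall-annular} is unavailable. In fact boundedness of the $U_n$ cannot be established at this stage of the proof at all: it is precisely the content of Claim \ref{claim:bdfix6}, proved only later with the gradient-like Brouwer foliation and the linking-number machinery.

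The paper's proof of the statement avoids this entirely by exploiting the ``we may assume'' freedom rather than proving a dichotomy about the given sequence: if some $U_n$ is unbounded, replace the whole sequence by that single disk repeated (its diameter is infinite, so all standing hypotheses persist), which is exactly the ``all equal to $\pi(U_0)$'' alternative; if every $U_n$ is bounded, pass to a subsequence of strictly increasing diameters and invoke the preceding claim (bounded disks with intersecting projections have equal projections) to get pairwise disjointness. You should restructure your clique case along these lines instead of trying to prove equality of intersecting projections without a boundedness hypothesis.
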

\begin{proof} Assume first that $U_n$ is unbounded for some $n$. Then we may assume that $U_m=U_n$ for each $m\in \N$, and all the required hypotheses hold. Now assume that each $U_n$ is bounded. Since by our hypothesis $\diam(U_n)\to \infty$ as $n\to \infty$, for each $n\in \N$ we may find $m\in \N$ such that $\diam(U_m)>\diam(U_k)$ for all $k\in \{1,\dots,n\}$, so that $\pi(U_m)\neq \pi(U_k)$ for all $k\in \{1,\dots,n\}$. Using this fact, we may extract a subsequence of $(U_n)_{n\in \N}$ which projects to pairwise distinct disks, and these disks must be pairwise disjoint due to the preivous claim.
\end{proof}

\subsection{Obtaining a gradient-like Brouwer foliation}
Since from Claim \ref{claim:bdfix13} we are assuming that $\fix(f)$ is totally disconnected, by Proposition \ref{pro:brouwer} we know that there is an oriented foliation $\mc{F}$ with singularities $X=\sing(\mc{F})$ and an isotopy $\mc{I}=(f_t)_{t\in [0,1]}$ from the identity to $f$ fixing $X$ pointwise, such that $\mc{I}$ lifts to the isotopy $\hat{\mc{I}}=(\hat{f}_t)_{t\in [0,1]}$ from the identity to $\hat{f}$ and such that for any $z\in \T^2\sm X$ the arc $(f_t(z))_{t\in [0,1]}$ is homotopic with fixed endpoints in $\T^2\sm X$ to a positively transverse (to $\mc{F}$) arc. The set $\hat{X} = \pi^{-1}(\hat{X})\subset \fix(\hat{f})$ is the set of singularities of $\hat{\mc{F}}$ and is fixed by $\hat{\mc{I}}$. For each $z\in \R^2\sm \hat{X}$, the arc $(\hat{f}_t(z))_{t\in [0,1]}$ is homotopic with fixed endpoints in $\R^2\sm \hat{X}$ to a positively transverse (to $\hat{\mc{F}}$) arc.

Our purpose now is to apply Proposition \ref{pro:zerohull} to show that $\mc{F}$ is gradient-like. In view of Remark \ref{rem:zerohull}, it suffices to find $v_1,v_2,v_3,v_4\in \Z^2_*$ such that $0$ is in the interior of the convex hull of $\{v_1,v_2,v_3,v_4\}$ and for each $i$ a positively transverse arc $\gamma_i$ in $\R^2\sm \hat{X}$ such that $\gamma_i(1)-\gamma_i(0)=v_i$. Indeed this would imply that $\mc{C}(z)$ contains $0$ in the interior of its convex hull for some $z\in \T^2$ and thus that $\mc{F}$ is gradient-like. 

\begin{claim} For each $v\in \Z^2_*$ there is $x\in \R^2\sm \hat{X}$ and $w\in \Z^2_* \sm \R v$ such that there are positively transverse arcs from $x$ to $x+w$ and from $x$ to $x-w$.
\end{claim}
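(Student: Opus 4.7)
The plan is to construct positively transverse loops in $\T^2\sm X$ with $\T^2$-homology classes in both the $+w$ and $-w$ directions (for some $w\in \Z^2_*\sm \R v$), reposition them to a common base point, and combine them to produce loops with exactly opposite homologies.

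Fix $v\in \Z^2_*$ and pick $w\in \Z^2_*\sm \R v$. By Claim \ref{claim:bdfix5} we have $\diam(\proj_w(U_n))\to \infty$. Every integer translate $U_n+u$ is again an $\hat f$-invariant inessential topological disk satisfying the same hypotheses, so after replacing $U_n$ by a suitable translate I may assume, for $n$ large, that $\proj_w(U_n)$ straddles zero with $\sup\proj_w(U_n)$ and $-\inf\proj_w(U_n)$ both arbitrarily large. Pick $\hat a_\pm\in U_n\sm\hat X$ with $\pm\proj_w(\hat a_\pm)\gg\|w\|$. By Claim \ref{claim:bdfix-nw} together with the injectivity of $\pi|_{U_n}$ coming from the inessentiality of $\pi(U_n)$, the set $U_n\sm\hat X$ is a connected set of nonwandering points of $\hat f$ in $\R^2\sm\hat X$, so Proposition \ref{pro:pta}(4) yields a positively transverse arc $\alpha$ in $\R^2\sm\hat X$ from $\hat a_-$ to $\hat a_+$.

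Next I close up the arc through $\T^2\sm X$. Since $f$ is nonwandering on $\T^2$ and $X$ is $f$-invariant, every point of the connected surface $\T^2\sm X$ is nonwandering for $f|_{\T^2\sm X}$; Proposition \ref{pro:pta}(4) applied in $\T^2\sm X$ thus provides a positively transverse return arc $\beta$ from $\pi(\hat a_+)$ to $\pi(\hat a_-)$. The concatenation $\pi\circ\alpha\ast \beta$ is a positively transverse loop at $\pi(\hat a_-)$ whose $\T^2$-homology class $u_+\in \Z^2$ equals the $\R^2$-displacement of the lift that starts at $\hat a_-$: writing $\hat c$ for the endpoint of the lift of $\beta$ starting at $\hat a_+$, we have $\proj_w(u_+)=\proj_w(\hat a_+-\hat a_-)+\proj_w(\hat c-\hat a_+)$, where the first term is arbitrarily large and positive while the second is bounded by foliation data independent of $n$; for $n$ large this forces $u_+\in \Z^2_*\sm \R v$. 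The symmetric construction, swapping the roles of $\hat a_+$ and $\hat a_-$, produces a second positively transverse loop in $\T^2\sm X$ with homology class $u_-\in \Z^2_*\sm \R v$ and $\proj_w(u_-)$ arbitrarily negative.

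Finally I reposition both loops to a common base point $p\in \T^2\sm X$ by conjugating each one with positively transverse arcs in $\T^2\sm X$ between $p$ and the corresponding original base point, exactly as in the justification of Remark \ref{rem:zerohull}; the required arcs between arbitrary points of $\T^2\sm X$ exist by Proposition \ref{pro:pta}(4) applied in $\T^2\sm X$. Because concatenation of a positively transverse loop with itself adds the homology classes, the repositioned loops yield, for any $m_+,m_-\in \Z_{>0}$, positively transverse loops at $p$ with $\T^2$-homology classes of the form $c_\pm + m_\pm u_\pm$, where $c_\pm\in \Z^2$ are fixed constants depending on the repositioning arcs. Exploiting the bidirectional bigness of $\proj_w(U_n)$ (together with the freedom to vary the return arc $\beta$ via Proposition \ref{pro:pta}(4) in $\T^2\sm X$) to align $u_\pm$ closely with $\pm w$ and to make $m_\pm\|u_\pm\|$ dominate $\|c_\pm\|$, I choose $m_+,m_-$ so that $c_++m_+u_+=-(c_-+m_-u_-)=:Nw'$ for some $N\in \Z_{>0}$ and $w'\in \Z^2_*\sm \R v$. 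Lifting the resulting two loops at $p$ to $\R^2\sm\hat X$ starting at a common lift $\hat x$ of $p$ then gives positively transverse arcs $\hat x\to \hat x\pm Nw'$ in $\R^2\sm\hat X$, which proves the claim with $w$ replaced by $Nw'$. The main obstacle is this final combinatorial step of obtaining \emph{exactly} opposite integer homology classes at the same base point; this is where one needs both the flexibility of choosing different return arcs $\beta$ (to control the $\proj_{w^\perp}$-components of $u_\pm$) and the ability to dominate the fixed correction terms $c_\pm$ by making $|\proj_w(\hat a_\pm)|$ as large as needed.
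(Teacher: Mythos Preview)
Your argument has two genuine gaps that prevent it from going through as written.

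First, the assertion that $\proj_w(\hat c-\hat a_+)$ is ``bounded by foliation data independent of $n$'' is unjustified. Proposition~\ref{pro:pta}(4) only asserts the \emph{existence} of a positively transverse arc between two given points of a connected nonwandering set; it gives no control over the arc's length, shape, or the integer translate at which its lift terminates. Since the endpoints $\pi(\hat a_\pm)$ vary with $n$, there is no uniform bound on the displacement of the lift of $\beta$. (Note also that you cannot appeal to any gradient-like property of $\mc F$ here, since establishing that $\mc F$ is gradient-like is precisely the purpose of the present claim.)

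Second, and more seriously, the ``final combinatorial step'' does not work. You need positive integers $m_\pm$ with $m_+u_++m_-u_-=-(c_++c_-)$. If $u_+,u_-$ are linearly independent this system has at most one real solution, with no reason to expect positive integers; if they are parallel the right-hand side must lie on the line $\R u_+$, which is again unjustified. Your proposed fix---``align $u_\pm$ closely with $\pm w$'' and ``vary the return arc $\beta$ to control the $\proj_{w^\perp}$-components''---presumes exactly the kind of control over the homotopy class of $\beta$ that Proposition~\ref{pro:pta} does not provide. You have essentially identified the obstruction yourself without resolving it.

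The paper avoids this integrality problem entirely by a different mechanism: it fixes an arc $\gamma$ in $\R^2\sm\hat X$ from some $z$ to $z+v$, notes that for large $n$ many translates $U_n+i v^\perp+j v$ meet $[\gamma]$, and uses a pigeonhole argument to find a limit point $x\in[\gamma]$ near which two distinct translates $U_n+u$ and $U_n+u'$ (with $u'-u\notin\R v$) accumulate. This forces $x$ to be nonwandering, so Proposition~\ref{pro:pta}(3) yields a neighborhood $V_x$ in which any two points are joined by positively transverse arcs. Since a single translate $U_n+u'$ meets both $V_x$ and $V_x+w$ (where $w=u'-u$), one builds the required arcs $x\to x\pm w$ by going through $U_n+u'$ and back, using Proposition~\ref{pro:pta}(4) on the connected nonwandering set $U_n+u'$. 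The integer $w$ appears naturally as the difference of two translates, so no Diophantine matching is needed.
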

\begin{proof} 
Since $\R^2\sm \hat{X}$ is connected, for any $z\in \R^2\sm \hat{X}$ we may find an arc $\gamma$ joining $z$ to $z+v$ in $\R^2\sm \hat{X}$. Let $\Gamma = \bigcup_{n\in \Z} [\gamma]+n{v}$. By Claim \ref{claim:bdfix5}, $\proj_{v^\perp}(U_n)\to \infty$. In particular, given $m\in \N$ there is $n=n_m$ such that $\proj_{v^\perp}(U_n)>\diam(\proj_{v^\perp}(\Gamma))+(m+1)\norm{v^\perp}$. It follows that $U_n+iv^\perp$ intersects $\Gamma$ for at least $m$ consecutive values of $i$. Thus there is a set $\{(i_1, j_1)$, \dots, $(i_m, j_m)\}\subset \Z^2$ such that $U_n+i_kv^\perp+j_kv$ intersects $[\gamma]$ for $1\leq k\leq m$ and $i_k = i_0+k$ for some $i_0\in \Z$. Choosing one point in each intersection $[\gamma]\cap (U_n+i_kv^\perp+j_kv)$, we get $m$ points in $[\gamma]$, and so by a pigeonhole argument two of them must be a distance less than $r_m = \sqrt{2}\diam[\gamma]/\floor{\sqrt{m}}$ apart (where $\floor{y}$ is the largest integer not greater than $y$). 
Thus one can find $x_m\in [\gamma]$ such that $B_{r_m}(x_m)$ intersects $U_n+i_kv^\perp+j_kv$ for two different values of $k$. Note that this implies that $B_{r_m}(x_m)$ intersects $U_n+u$ and $U_n+u'$ for two different elements $u,u'\in \Z^2$ such that $u'-u\notin\R v$ (because $i_k\neq i_{k'}$ if $k\neq k'$).

Letting $x$ be a limit point of $(x_m)_{m\in \N}$ one sees that for any $r>0$, there are arbitrarily large values of $n$ for which there are at least two different elements $u,u'\in \Z^2$ such that $B_r(x)$ intersects both $U_n+u$ and $U_n+u'$, and $u'-u\notin \R v$.

In particular, since $U_n+u\subset \Omega(\hat{f})$ for all $u\in \Z^2$ and $n\in \N$, this implies that $x$ is nonwandering for $\hat{f}$, so by Proposition \ref{pro:pta} there is a neighborhood $V_x$ of $x$ such that every point of $V_x$ can be joined to any other point of $V_x$ with a positively transverse arc. But then we can find $n\in \Z$ and $u,u'\in \Z^2$  such that $w=u'-u\notin \R v$ and $V_x$ intersects both $U_n+u$ and $U_n+u'$, so that $U_n+u'$ intersects both $V_x$ and $V_x+w$. If $z_0\in V_x\cap (U_n+u')$ and $z_1\in (V_x+w)\cap (U_n+u')$ then we can find a positively transverse arc $\sigma$ from $x$ to $z_1$ because of our choice of $V_x$, and we may find a positively transverse arc $\alpha$ from $z_0\in U_n+u'$ to $z_1 \in U_n+u'$ because $U_n+u'$ is connected and contained $\Omega(\hat{f})$ (see Proposition \ref{pro:pta}). Finally, we may find a positively transverse arc $\eta$ from $z_1-w\in V_x$ to $x$, so that $\eta+w$ is a positively transverse arc from $z_1$ to $x+(u'-u)$. Therefore $\sigma*\alpha*(\eta+w)$ is a positively transverse arc $\hat{\alpha}$ from $x$ to $x+w$. The same argument can be repeated in the opposite direction, obtaining a positively transverse arc $\hat{\beta}$ from $x+w$ back to $x$, which translated by $-w$ provides a positively transverse arc from $x$ to $x-w$. This proves the claim.
\end{proof}

As explained before its statement, the previous claim together with Proposition \ref{pro:zerohull} allow to conclude the following:
\begin{claim} The foliation $\mc{F}$ is gradient-like.
\end{claim}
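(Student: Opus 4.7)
The plan is to apply the previous claim twice to produce positively transverse loops whose homology classes have the origin in the interior of their convex hull, and then invoke Remark~\ref{rem:zerohull} together with Proposition~\ref{pro:zerohull} to conclude.

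First I would apply the previous claim with an arbitrary vector $v_1\in \Z^2_*$, say $v_1=(1,0)$, to obtain a point $x_1\in \R^2\sm \hat{X}$, a vector $w_1\in \Z^2_*\sm \R v_1$, and positively transverse arcs in $\R^2\sm \hat{X}$ joining $x_1$ to $x_1+w_1$ and to $x_1-w_1$. Projecting through $\pi$, one obtains positively transverse loops $\gamma_1^{\pm}$ in $\T^2\sm X$ with homology classes $(\gamma_1^{\pm})^* = \pm w_1$.

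Next I would apply the previous claim a second time with $v = w_1$ to obtain a point $x_2\in \R^2\sm \hat{X}$, a vector $w_2\in \Z^2_* \sm \R w_1$, and positively transverse arcs joining $x_2$ to $x_2\pm w_2$. These project to positively transverse loops $\gamma_2^{\pm}$ in $\T^2\sm X$ with homology classes $\pm w_2$. Since $w_2\notin \R w_1$, the vectors $w_1,w_2$ are linearly independent, so the convex hull of the four homology classes $\{w_1,-w_1,w_2,-w_2\}$ is a nondegenerate parallelogram centered at the origin, which in particular contains $0$ in its interior.

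At this point the hypotheses of Remark~\ref{rem:zerohull} are satisfied: we have four positively transverse loops $\gamma_1^{\pm},\gamma_2^{\pm}$ (not a priori with common basepoint) whose homology classes already surround $0$. Since $f$ is nonwandering and $\T^2\sm X$ is connected, Proposition~\ref{pro:pta} lets us concatenate these loops with positively transverse connecting arcs to produce positively transverse loops based at any chosen $z\in \T^2\sm X$ whose homology classes still contain $0$ in the interior of their convex hull. Thus $0$ lies in the interior of the convex hull of $\mc{C}(z)$, and by Proposition~\ref{pro:zerohull} a fundamental loop for $\mc{F}$ exists, which is the definition of gradient-like.

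The argument poses no real obstacle since the previous claim does the heavy lifting; the only subtlety is the passage from displacement vectors of transverse arcs at (possibly) different basepoints to transverse loops at a common basepoint with the prescribed homology classes, which is precisely the content of Remark~\ref{rem:zerohull}.
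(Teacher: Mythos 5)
Your proposal is correct and follows essentially the same route as the paper: it applies the preceding claim twice (first with an arbitrary $v_1$, then with $v=w_1$) to obtain positively transverse arcs with displacement vectors $\pm w_1,\pm w_2$ spanning the plane, and then invokes Remark~\ref{rem:zerohull} and Proposition~\ref{pro:zerohull} to produce a fundamental loop, which is exactly how the paper concludes that $\mc{F}$ is gradient-like.
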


\subsection{Linking of the sets $U_n$ and points of $\hat{X}$}

Since $\mc{F}$ is gradient-like, every regular leaf $\Gamma$ of $\hat{\mc{F}}$ connects two different elements of $\hat{X}$ and there is a uniform bound $\diam(\Gamma)<M_0$.

\begin{claim} We may assume that $U_n\cap \hat{X}\neq \emptyset$ for each $n\in \N$.
\end{claim}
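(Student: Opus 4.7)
The plan is to invoke Proposition~\ref{pro:large-X} directly with $k=1$, extract a tail of the sequence, and then reindex. First I would verify that each $U_n$ satisfies the hypotheses of Proposition~\ref{pro:large-X}: each $U_n$ is an open simply connected $\hat{f}$-invariant subset of $\R^2$ (so in particular $\hat{f}^1$-invariant), and by Claim~\ref{claim:bdfix-nw} we have $U_n\subset \Omega(\hat{f})$, so there are no wandering points of $\hat{f}$ in $U_n$.

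Next I would apply Proposition~\ref{pro:large-X} with $k=1$ to obtain a constant $M_1$ (depending only on $\hat{f}$ and the Brouwer foliation $\hat{\mc{F}}$) such that whenever $U$ is an open simply connected $\hat{f}$-invariant set without wandering points and $\diam(U)>M_1$, one has $U\cap \hat{X}\neq \emptyset$. Since $\diam(U_n)\to \infty$ by construction, there is some $n_0\in \N$ with $\diam(U_n)>M_1$ for all $n\geq n_0$, and hence $U_n\cap \hat{X}\neq \emptyset$ for every such $n$.

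Finally, I would replace the sequence $(U_n)_{n\in \N}$ by $(U_{n+n_0})_{n\in \N}$. This tail still satisfies all properties established in the previous claims: the sets are $\hat{f}$-invariant open topological disks with $\pi(U_n)$ inessential, contained in $\Omega(\hat{f})$, with $\diam(U_n)\to \infty$ and $\diam(\proj_v(U_n))\to \infty$ for each $v\in \Z^2_*$ (Claim~\ref{claim:bdfix5}); the maximality from Claim~\ref{claim:bdfix7} and the dichotomy from Claim~\ref{claim:bdfix14} (pairwise disjoint projections, or all equal to $\pi(U_0)$) are preserved under taking a subsequence since in the ``all equal'' branch the tail is still constant. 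After reindexing, we may assume that $U_n\cap \hat{X}\neq \emptyset$ for every $n\in \N$, which is the claim. No serious obstacle arises here; the step is essentially a bookkeeping consequence of Proposition~\ref{pro:large-X} combined with $\diam(U_n)\to \infty$.
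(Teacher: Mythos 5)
Your proposal is correct and follows essentially the same route as the paper: apply Proposition~\ref{pro:large-X} (with $k=1$, using Claim~\ref{claim:bdfix-nw} to rule out wandering points), use $\diam(U_n)\to\infty$ to get a threshold beyond which $U_n\cap\hat{X}\neq\emptyset$, and pass to a tail with reindexing. The extra bookkeeping you include about the tail preserving the earlier claims is implicit in the paper's "extracting a subsequence and re-indexing" and is harmless.
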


\begin{proof} By Claim \ref{claim:bdfix-nw}, $U_n$ has no wandering points. By Proposition \ref{pro:large-X} there is $M_1$ such that if $\diam(U_n)>M_1$, then $U_n\cap \hat{X}\neq \emptyset$. Since $\diam(U_n)\to \infty$, by extracting a subsequence and re-indexing, we may assume that  $U_n\cap \hat{X}\neq \emptyset$ for all $n$.
\end{proof}

\begin{claim}\label{claim:bdfix2} For any $n\in \N$ and $v\in \Z^2$, every regular leaf of $\hat{\mc{F}}$ that intersects $\cl(U_n+v)$ has one endpoint in $U_n+v$.
\end{claim}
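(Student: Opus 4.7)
The plan is to reduce this claim to a direct application of Proposition \ref{pro:inter-endpoint} to the translated set $U_n + v$. The hypotheses of that proposition require a simply connected, open, $\hat{f}$-periodic set that intersects $\hat{X}$ and contains no wandering points of $\hat{f}$, and each of these properties is inherited by $U_n + v$ from $U_n$.

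First I would verify that $U_n + v$ is open, simply connected, and $\hat{f}$-invariant. Openness and simple connectedness are preserved by the translation by $v$. Since $\hat{f}$ commutes with every integer translation (as it is a lift of a torus homeomorphism via an isotopy from the identity), we have $\hat{f}(U_n + v) = \hat{f}(U_n) + v = U_n + v$, so $U_n + v$ is $\hat{f}$-invariant (in particular $\hat{f}$-periodic with period $1$).

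Next I would check that $(U_n + v) \cap \hat{X} \neq \emptyset$. Since $X = \sing(\mc{F}) \subset \T^2$, the lifted singular set $\hat{X} = \pi^{-1}(X)$ is $\Z^2$-invariant, so $\hat{X} + v = \hat{X}$. By the previous claim, $U_n \cap \hat{X} \neq \emptyset$, and translating by $v$ yields $(U_n + v) \cap \hat{X} = (U_n \cap \hat{X}) + v \neq \emptyset$.

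Finally, I would show that $U_n + v$ contains no wandering points of $\hat{f}$. Since $\pi(U_n + v) = \pi(U_n)$ is inessential, $\pi|_{U_n + v}$ is a homeomorphism onto $\pi(U_n)$ which conjugates $\hat{f}|_{U_n + v}$ to $f|_{\pi(U_n)}$; as $f$ is nonwandering on $\T^2$, so is $f|_{\pi(U_n)}$ on its image, hence $\hat{f}|_{U_n + v}$ is nonwandering. (This is exactly the argument used in Claim \ref{claim:bdfix-nw}.) All hypotheses of Proposition \ref{pro:inter-endpoint} being satisfied, the conclusion is that every regular leaf of $\hat{\mc{F}}$ intersecting $\cl(U_n + v)$ has one endpoint in $U_n + v$, which is precisely the claim. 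There is no substantial obstacle here since the heavy lifting was already done in Proposition \ref{pro:inter-endpoint}; the only point to be careful about is recognizing that all four hypotheses transfer verbatim from $U_n$ to $U_n + v$ by virtue of the $\Z^2$-equivariance of the setup.
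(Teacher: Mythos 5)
Your proposal is correct and follows exactly the paper's argument: the paper also deduces the claim by applying Proposition \ref{pro:inter-endpoint} to $U_n+v$, using the $\Z^2$-invariance of $\hat{X}$ together with $U_n\cap\hat{X}\neq\emptyset$, and the nonwandering property from Claim \ref{claim:bdfix-nw}. Your write-up merely makes explicit the routine verifications (openness, simple connectedness, $\hat{f}$-invariance via commutation with integer translations) that the paper leaves implicit.
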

\begin{proof}
Since $U_n$ intersects $\hat{X}$ and $\hat{X}$ is $\Z^2$-invariant, it follows that $U_n+v$ intersects $\hat{X}$, and the claim follows from Proposition \ref{pro:inter-endpoint} (recalling that $f|_{U_n+v}$ is nonwandering by Claim \ref{claim:bdfix-nw}).
\end{proof}

\subsection{Boundedness of $U_n$}

\begin{claim}\label{claim:bdfix6} $U_n$ is bounded for each $n\in \N$.
\end{claim}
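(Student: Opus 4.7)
The plan is to assume for contradiction that $U_n$ is unbounded for some $n$, prove that under this assumption $\bd U_n\subset \hat X$, and then derive a topological contradiction from the total disconnectedness of $\hat X$ together with the simple connectedness of $U_n$. To begin, I would combine Claim \ref{claim:bdfix2} (applied with $v=0$) with the uniform bound $M_0$ on the diameter of regular leaves supplied by the gradient-like property (Lemma \ref{lem:gradient}). Setting $Y:=U_n\cap \hat X$, this yields that every point of $\cl(U_n)\setminus \hat X$ lies on a regular leaf of diameter at most $M_0$ having an endpoint in $Y$; in particular $U_n\subset \bar B(Y,M_0)$.

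The crux is to establish $\bd U_n\subset \hat X$. Suppose for contradiction that some $y\in\bd U_n$ is a regular point of $\hat{\mc F}$; then $y$ lies on a regular leaf $\Gamma_y$ with at least one endpoint $p\in Y$. A flow-box analysis in a small neighborhood $V$ of $y$ disjoint from $\hat X$ shows that $\bd U_n\cap V$ contains an arc transverse to $\hat{\mc F}$, with $U_n$ lying locally on one side of it. I would then exploit the $\hat f$-invariance of $\bd U_n$, the dynamical transversality of $\hat{\mc I}$ to $\hat{\mc F}$, and the disjointness of the integer translates $U_n+w$ from $U_n$ for $w\in\Z^2_*$ (which follows from the inessentiality of $\pi(U_n)$ via Proposition \ref{pro:essential-set}(6)) to propagate this local wall and construct an $\hat f$-invariant open set $V'$ satisfying $\proj_v^{-1}((-\infty,a))\subset V'\subset \proj_v^{-1}((-\infty,b))$ for some $v\in\Z^2_*$ and $a<b$. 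Proposition \ref{pro:wall-annular} then forces $f$ to be annular, contradicting the hypothesis of Theorem \ref{th:bdfix}.

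With $\bd U_n\subset \hat X$ in hand, the remaining contradiction is purely topological. The set $\bd U_n$ is closed and totally disconnected in $\R^2$, hence does not separate it, so $\R^2\setminus \bd U_n = U_n\sqcup(\R^2\setminus\cl U_n)$ is connected. As a disjoint union of two open sets, one of them must be empty; since $U_n\neq\emptyset$ we conclude $\cl U_n=\R^2$ and $\bd U_n=\R^2\setminus U_n$. Viewing $U_n$ as a simply connected open subset of $\R^2\cup\{\infty\}$, the complement $\bd U_n\cup\{\infty\}$ is connected and totally disconnected, hence a single point, necessarily $\{\infty\}$. Therefore $\bd U_n=\emptyset$ and $U_n=\R^2$, contradicting the inessentiality of $\pi(U_n)$. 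The main obstacle is the middle step: producing the sandwiched $\hat f$-invariant open set $V'$ to which Proposition \ref{pro:wall-annular} applies; the initial structural reduction and the final topological contradiction are comparatively routine.
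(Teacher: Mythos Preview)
Your setup and the final topological contradiction are fine, but the middle step---establishing $\bd U_n\subset\hat X$---has a genuine gap. The assertion that a flow-box analysis produces a transverse arc inside $\bd U_n\cap V$ is unjustified: the boundary of a simply connected open set in $\R^2$ can be extremely wild (not even locally connected), and there is no reason for $\bd U_n$ to contain any arc at all, let alone one transverse to $\hat{\mc F}$. Even granting such an arc, the subsequent ``propagation of this local wall'' into a global $\hat f$-invariant strip of the form required by Proposition~\ref{pro:wall-annular} is left entirely unexplained, and no mechanism for it is apparent. A further warning sign is that you never invoke the \emph{maximality} of $U_n$ (Claim~\ref{claim:bdfix7}), which turns out to be the decisive structural ingredient.

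The paper's route to $\bd U\subset\hat X$ is quite different. Write $U=U_n$ and let $W$ be the union of $U$ with all regular leaves of $\hat{\mc F}$ meeting $U$. Using Claim~\ref{claim:bdfix2} together with the fact (from Claim~\ref{claim:bdfix5}) that $U$ is unbounded in \emph{every} direction, one shows $W\cap(W+v)=\emptyset$ for all $v\in\Z^2_*$: if a leaf $\Gamma$ connected $U$ to $U+v$, then the resulting $v$-periodic strip built from $\Gamma$ and an arc in $U+v$ would be crossed by some translate $U+mv^\perp$ with $m\neq 0$, forcing (via Claim~\ref{claim:bdfix2}) an endpoint of $\Gamma$ to lie in that translate, a contradiction. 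Next, set $\mc O=\bigcup_{k\in\Z}\hat f^k(W)$; a similar argument shows $\mc O\cap(\mc O+v)=\emptyset$ for all $v\in\Z^2_*$, so $\pi(\mc O)$ is inessential. Then $\fill(\mc O)$ is an $\hat f$-invariant open topological disk containing $U$ and projecting inessentially, so by maximality $\fill(\mc O)=U$. In particular every leaf meeting $U$ is contained in $U$, and combining this with Claim~\ref{claim:bdfix2} (applied to leaves through points of $\bd U$) gives $\bd U\subset\hat X$.
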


Suppose for contradiction that $U=U_n$ is unbounded for some $n$. Then we may have assumed from the beginning of the proof of the theorem that $U=U_n$ for all $n$, since the hypotheses hold for that case. In particular, Claim \ref{claim:bdfix5} implies that $\diam \proj_v(U)=\infty$ for any $v\in \Z^2_*$.
From now until the end of this subsection, we seek a contradiction to prove Claim \ref{claim:bdfix6}. 

Let $W$ be the union of $U$ with all leaves of $\hat{\mc{F}}$ that intersect $U$. Observe that $W$ is open.

\begin{claim} $W\cap (W+v)=\emptyset$ for each $v\in \Z^2_*$. 
\end{claim}

\begin{proof}
Suppose for contradiction that this is not the case. Then some leaf $\Gamma$ of $\hat{\mc{F}}$ intersects both $U$ and $U+v$. Thus $\Gamma$ joins a point $p\in U\cap \hat{X}$ to a point $q\in (U\cap \hat{X})+v$, where $0\neq v\in \Z^2$. Let $\gamma$ be the subarc of $\Gamma$ joining $p$ to $q$, and let $\sigma$ be any arc in $U+v$ joining $q$ to $p+v$. Then $\gamma*\sigma$ is an arc joining $p$ to $p+v$. Hence $\Theta=\bigcup_{n\in \Z} [\gamma*\sigma]+nv$ is a closed connected set that separates $\R^2$, and $\proj_{v^\perp}(\Theta)$ is bounded.
The fact that $\diam \proj_{v^\perp}(U)=\infty$ implies that $U+mv^\perp$ intersects $\Theta$ for some $m\in \Z$, $m\neq 0$, and so $U+mv^\perp$ intersects $[\gamma*\sigma]+nv$ for some $n\in \Z$. But $U+mv^\perp$ is disjoint from $[\sigma]+nv \subset U+(n+1)v$, since otherwise $U+mv^\perp-(n+1)v$ would intersect $U$, contradicting the fact that $\pi(U)$ is inessential (noting that $mv^\perp-(n+1)v\neq 0$, since $m\neq 0$). Therefore $U+mv^\perp$ intersects $[\gamma]+nv$. But since $\Gamma+nv$ is a leaf of $\hat{\mc{F}}$ and it contains $\gamma+nv$, it follows from Claim \ref{claim:bdfix2} that $\Gamma+nv$ has one endpoint in $U+mv^\perp$. On the other hand we know that the endpoints of $\Gamma+nv$ are $p+nv\in U+nv$ and $q+nv \in U+(n+1)v$, both of which are disjoint from $U+mv^\perp$ (since $m\neq 0$). This contradiction shows that $W\cap (W+v)=\emptyset$ for each $v\in \Z^2_*$.

\end{proof}

Now let $\mc{O}=\bigcup_{n\in \Z} \hat{f}^n(W)$. Note that  $\mc{O}$ is open, connected and $\hat{f}$-invariant. 

\begin{claim} $\mc{O}\cap (U+v)=\emptyset$ for each $v\in \Z^2_*$
\end{claim}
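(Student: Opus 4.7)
The plan is to derive this claim directly from the previous one via the $\hat{f}$-invariance of translates of $U$. First I would observe that since $\hat{f}$ is a lift of a torus homeomorphism, it commutes with the group of integer translations; combined with the fact that $U$ is $\hat{f}$-invariant, this gives $\hat{f}(U+v) = \hat{f}(U)+v = U+v$ for every $v \in \Z^2$. Hence $U+v$ is $\hat{f}$-invariant as well.

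Next I would use this invariance to reduce the claim to a statement about $W$ alone. For any $n \in \Z$ and $v \in \Z^2_*$, applying $\hat{f}^{-n}$ to $\hat{f}^n(W) \cap (U+v)$ gives the set $W \cap \hat{f}^{-n}(U+v) = W \cap (U+v)$. Therefore
\[
\mc{O} \cap (U+v) = \bigcup_{n\in\Z} \bigl(\hat{f}^n(W) \cap (U+v)\bigr) = \bigcup_{n\in\Z} \hat{f}^n\bigl(W \cap (U+v)\bigr),
\]
and so it suffices to prove $W \cap (U+v) = \emptyset$ for each $v \in \Z^2_*$.

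Finally, since $U \subset W$ by construction, we have $U+v \subset W+v$, so
\[
W \cap (U+v) \subset W \cap (W+v),
\]
and the right-hand side is empty by the preceding claim. This yields the desired conclusion. There is no real obstacle here: given the $\Z^2$-equivariance of $\hat{f}$ and the previously established disjointness $W \cap (W+v) = \emptyset$, the claim is immediate, and the whole argument is just bookkeeping with invariance and inclusions.
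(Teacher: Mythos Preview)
Your proof is correct and follows exactly the same approach as the paper: use $U+v\subset W+v$ together with the previous claim to get $W\cap(U+v)=\emptyset$, then use the $\hat{f}$-invariance of $U+v$ to pass from $W$ to each $\hat{f}^n(W)$ and hence to $\mc{O}$. The paper's version is slightly terser (it leaves the verification that $U+v$ is $\hat{f}$-invariant implicit), but the content is identical.
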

\begin{proof}
Indeed, since $W\cap (W+v)=\emptyset$, in particular $W\cap (U+v)=\emptyset$. Since $U+v$ is invariant, it follows that $\hat{f}^k(W)\cap (U+v)=\emptyset$ for any $k\in \Z$, and the claim follows.
\end{proof}

\begin{claim} $\mc{O}\cap (\mc{O}+v)=\emptyset$ for each $v\in \Z^2_*$ (\ie $\pi(\mc{O})$ is inessential).
\end{claim}
\begin{proof} If $\mc{O}\cap (\mc{O}+v)\neq \emptyset$ and $v\neq 0$, since $\mc{O}$ is connected it contains an arc $\sigma$ joining some point $z\in \mc{O}$ to $z+v\in \mc{O}$. If $\Theta=\bigcup_{n\in \Z} [\sigma]+nv$, then $\Theta$ is bounded in the $v^\perp$ direction, and the fact that $\diam(P_{v^\perp}(U))=\infty$ implies that $U+mv^\perp$ intersects $\Theta$ for some $m\in \Z$, $m\neq 0$. But then $U+mv^\perp$ intersects $[\sigma]+nv$ for some $n\in \Z$, so that $U+mv^\perp -nv$ intersects $[\sigma]\subset \mc{O}$. Since $mv^\perp-nv\neq 0$, this contradicts the previous claim.
\end{proof}

\begin{claim}\label{claim:bdfix8} If a regular leaf $\Gamma$ of $\hat{\mc{F}}$ intersects $U$, then it is contained in $U$.
\end{claim}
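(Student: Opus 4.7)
The plan is to use the maximality of $\pi(U)$ established in Claim \ref{claim:bdfix7}, combined with the information about $\mc{O}$ obtained in the preceding claims. The key observation is that $\Gamma$ sits inside $\mc{O}$, and the component of $\mc{O}$ containing $U$ can be forced to equal $U$ itself.

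First I would set $\Omega$ to be the connected component of $\mc{O}$ that contains $U$. Since $\hat{f}(\mc{O})=\mc{O}$ permutes components of $\mc{O}$ and $\hat{f}(U)=U \subset \Omega$, necessarily $\hat{f}(\Omega)=\Omega$. Next, by the preceding claim $\mc{O}\cap (\mc{O}+v)=\emptyset$ for every $v\in\Z^2_*$, so $\pi|_{\Omega}$ is injective and $\pi(\Omega)$ is an open, connected, inessential, $f$-invariant subset of $\T^2$ containing $\pi(U)$.

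Then I would apply the filling operation: by Proposition \ref{pro:essential-set}(2) and (5), $\fill(\pi(\Omega))$ is an $f$-invariant, inessential, filled open set, hence an open topological disk containing $\pi(U)$. The maximality of $\pi(U)$ among open, $f$-invariant, simply connected sets (Claim \ref{claim:bdfix7}) then forces $\fill(\pi(\Omega))=\pi(U)$, and in particular $\pi(\Omega)\subset \pi(U)$. Combined with the reverse inclusion, this gives $\pi(\Omega)=\pi(U)$. Now both $\Omega$ and $U$ are connected components of $\pi^{-1}(\pi(U))$ containing $U$, so $\Omega=U$. Since $\Gamma \subset W \subset \mc{O}$ is connected and meets $U$, it lies in the component $\Omega=U$, as required.

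I do not expect a serious obstacle here: the bulk of the work has already been done (inessentiality of $\pi(\mc{O})$ and $\hat{f}$-invariance of $\mc{O}$). The only point requiring a little attention is the verification that $\pi|_{\Omega}$ is injective (to conclude $\pi(\Omega)$ itself is inessential and equal in topology to $\Omega$) and that passing to $\fill(\pi(\Omega))$ is legitimate within the maximality hypothesis, since the maximality is stated for simply connected sets whereas $\pi(\Omega)$ need not a priori be simply connected. Both points dissolve once one applies Proposition \ref{pro:essential-set} carefully.
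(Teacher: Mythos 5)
Your argument is correct and is essentially the paper's own proof: the paper takes $\til{\mc{O}}=\fill(\mc{O})$, notes it is simply connected, $\hat f$-invariant and projects to an inessential set, and invokes the maximality from Claim \ref{claim:bdfix7} to get $\til{\mc{O}}=U$, whence $[\Gamma]\subset W\subset\mc{O}\subset U$. Your variant of filling $\pi(\mc{O})$ downstairs and pulling back (using injectivity of $\pi$ on $\mc{O}$, which also gives $\mc{O}\subset U$ directly once $\pi(\mc{O})=\pi(U)$) is only a cosmetic reordering of the same steps.
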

\begin{proof} 
Let $\til{\mc{O}} = \fill(\mc{O})$. It follows from the properties of $\mc{O}$ that $\til{\mc{O}}$ is simply connected, $\hat{f}$-invariant, and $\pi(\til{\mc{O}})$ is still inessential.
Thus $\til{\mc{O}}$ is a simply connected open $f$-invariant set that projects to an inessential set. Recalling that we are assuming (since Claim \ref{claim:bdfix7}) the maximality of $U$ with respect to these properties, and since $U\subset \til{\mc{O}}$, we conclude that $\til{\mc{O}}=U$. If a leaf $\Gamma$ of $\hat{\mc{F}}$ intersects $U$, then by the definition of $W$ and $\til{\mc{O}}$ it follows that $[\Gamma]\subset W\subset \mc{O}\subset \til{\mc{O}} = U$, proving our claim.
\end{proof}

\begin{claim}\label{claim:bdfix9} $\bd U\subset \hat{X}$
\end{claim}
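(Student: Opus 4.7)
I would argue by contradiction: suppose some point $z \in \bd U$ satisfies $z \notin \hat{X}$. Then $z$ lies on a unique regular leaf $\Gamma$ of $\hat{\mc{F}}$. The plan is to combine Claim \ref{claim:bdfix2} with Claim \ref{claim:bdfix8} to derive an immediate contradiction, so no new ingredients are needed beyond what has already been established.

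First, since $z \in \bd U \subset \cl(U)$, the leaf $\Gamma$ meets $\cl(U)$, so by Claim \ref{claim:bdfix2} (applied with $v = 0$) at least one endpoint $p$ of $\Gamma$ lies in $U$. Because $\mc{F}$ is gradient-like (so $\Gamma$ is a connection in the sense of Lemma \ref{lem:gradient}), $p \in \hat{X}$ and $p$ belongs to either the $\alpha$-limit or the $\omega$-limit of $\Gamma$; in particular every neighborhood of $p$ is met by $[\Gamma]$. Since $U$ is open and $p \in U$, we can choose such a neighborhood inside $U$, which forces $[\Gamma] \cap U \neq \emptyset$.

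Now Claim \ref{claim:bdfix8} applies and yields $[\Gamma] \subset U$. But $z \in [\Gamma]$ and $z \in \bd U$, while $U$ is open, so $\bd U \cap U = \emptyset$; this contradicts $z \in U$. Hence every boundary point of $U$ must be a singularity of $\hat{\mc{F}}$, i.e., $\bd U \subset \hat{X}$.

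The only thing to verify is that the previous claims genuinely apply: Claim \ref{claim:bdfix2} rests on $U$ meeting $\hat{X}$, which was arranged (by passing to a subsequence) in the claim preceding Claim \ref{claim:bdfix2}, and Claim \ref{claim:bdfix8} is already in force under the standing contradiction hypothesis that $U = U_n$ is unbounded and maximal. Given that, the argument is a short topological observation rather than a serious obstacle; the real work will come after this claim, where one needs to exploit the fact that $\hat{X}$ is totally disconnected while $\bd U$ cannot be (since $U$ is a proper, unbounded, simply connected open subset of $\R^2$).
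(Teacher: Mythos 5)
Your proof is correct and follows essentially the same route as the paper: take a regular leaf through a putative boundary point outside $\hat{X}$, use Claim \ref{claim:bdfix2} to place an endpoint in $U$, deduce $[\Gamma]\subset U$ via Claim \ref{claim:bdfix8}, and contradict $[\Gamma]\cap\bd U\neq\emptyset$ since $U$ is open. Your explicit justification that an endpoint in the open set $U$ forces $[\Gamma]\cap U\neq\emptyset$ is a detail the paper leaves implicit, but the argument is the same.
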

\begin{proof} If this is not the case, there is some regular leaf $\Gamma$ of $\hat{\mc{F}}$ such that $[\Gamma]\cap \bd U\neq \emptyset$. But Claim \ref{claim:bdfix2} implies that $\Gamma$ has one endpoint in $U$, and thus by our previous claim $\Gamma$ is entirely in $U$, a contradiction. 
\end{proof}

The last claim is the sought contradiction: since $\hat{X}$ is totally disconnected, it cannot contain the boundary of a topological disk. This contradiction completes the proof of Claim \ref{claim:bdfix6}, $\ie$ that $U_n$ is bounded for each $n$.

\subsection{End of the proof of Theorem \ref{th:bdfix}}

Now that we know that each $U_n$ is bounded, Claim \ref{claim:bdfix14} implies that the sets  $\pi(U_n)_{n\in \N}$ are pairwise disjoint. To finish the proof, we will repeat the same arguments from the proof of Claim \ref{claim:bdfix6}, however with the difference that now the sets $U_n$ are bounded, so the proofs of the claims change (note that we could not have used these arguments prior to knowing that the sets $U_n$ are pairwise disjoint, for which we needed to know that $U_n$ is bounded).

Recall that we are assuming that
\begin{itemize}
\item The sets $(U_n)_{n\in \N}$ are maximal in the sense of Claim \ref{claim:bdfix7}.
\item The sets $(\pi(U_n))_{n\in \N}$ are pairwise disjoint.
\item $U_n\cap \hat{X}\neq \emptyset$ for each $n\in \N$.
\end{itemize}

Let $W$ be the union of $U_1$ with all leaves of $\hat{\mc{F}}$ that intersect $U_1$. Observe that $W$ is open.

\begin{claim} $W\cap (W+v)=\emptyset$ for each $v\in \Z^2_*$. 
\end{claim}

\begin{proof}
Suppose for contradiction that this is not the case. Then some leaf $\Gamma$ of $\hat{\mc{F}}$ intersects both $U_1$ and $U_1+v$. Thus $\Gamma$ joins a point $p\in U_1\cap \hat{X}$ to a point $q\in (U_1\cap \hat{X})+v$, where $0\neq v\in \Z^2$. Let $\gamma$ be the subarc of $\Gamma$ joining $p$ to $q$, and let $\sigma$ be any arc in $U_1+v$ joining $q$ to $p+v$. Then $\gamma*\sigma$ is an arc joining $p$ to $p+v$. Letting $\Theta=\bigcup_{n\in \Z} [\gamma*\sigma]+nv$, the fact that $\diam \proj_{v^\perp}(U_n)\to \infty$ as $n\to \infty$ implies that for any sufficiently large $n$, there is $w$ such that $U_n+w$ intersects $\Theta$, and so $U_n+w$ intersects $[\gamma*\sigma]+kv$ for some $k\in \Z$. But $U_n+w$ is disjoint from $[\sigma]+kv \subset U_1+(k+1)v$, since otherwise $U_1+w-(k+1)v$ would intersect $U_n$, contradicting the fact that $\pi(U_1)\neq \pi(U_n)$. Therefore $U_n+w$ intersects $[\gamma]+kv$. But since $\Gamma+kv$ is a leaf of $\hat{\mc{F}}$ and it contains $\gamma+kv$, it follows from Claim \ref{claim:bdfix2} that $\Gamma+kv$ has one endpoint in $U_n+w$. On the other hand we know that the endpoints of $\Gamma$ are $p+kv\in U_1+kv$ and $q+kv \in U_1+(k+1)v$, both of which are disjoint from $U_n+w$ (again, because $\pi(U_1)\neq \pi(U_n)$). This contradiction shows that $W\cap (W+v)=\emptyset$ for each $v\in \Z^2_*$.
\end{proof}

Now let $\mc{O}=\bigcup_{n\in \Z} \hat{f}^n(W)$. Note that  $\mc{O}$ is open, connected and $\hat{f}$-invariant. 

\begin{claim} $\mc{O}\cap (U_1+v)=\emptyset$ for each $v\in \Z^2_*$
\end{claim}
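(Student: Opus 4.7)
The plan is to mimic exactly the short argument that was used for the analogous statement in the preceding unbounded case. The setup is identical once we replace $U$ by $U_1$, and only two ingredients are needed: the conclusion of the immediately preceding claim ($W\cap(W+v)=\emptyset$) and the $\hat{f}$-invariance of the translates $U_1+v$.

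First I would observe that since $U_1\subset W$, translating gives $U_1+v\subset W+v$. The previous claim therefore immediately yields $W\cap(U_1+v)=\emptyset$ for every $v\in\Z^2_*$. Next, I would note that $U_1+v$ is $\hat{f}$-invariant: indeed $U_1$ itself is $\hat{f}$-invariant by our standing hypothesis on the sequence $(U_n)$, and $\hat{f}$ commutes with every integer translation because it is a lift of a torus homeomorphism, so $\hat{f}(U_1+v)=\hat{f}(U_1)+v=U_1+v$.

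Applying $\hat{f}^k$ to both sides of $W\cap(U_1+v)=\emptyset$ and using this invariance gives
\[
\hat{f}^k(W)\cap(U_1+v)=\hat{f}^k(W)\cap\hat{f}^k(U_1+v)=\hat{f}^k\bigl(W\cap(U_1+v)\bigr)=\emptyset
\]
for every $k\in\Z$. Taking the union over $k$ and recalling that $\mc{O}=\bigcup_{k\in\Z}\hat{f}^k(W)$ yields $\mc{O}\cap(U_1+v)=\emptyset$, as desired.

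There is no real obstacle here; the content of the claim is essentially bookkeeping, and the hard work has already been done in the previous claim (the disjointness $W\cap(W+v)=\emptyset$), where the geometric argument using the unbounded projections $\diam\proj_{v^\perp}(U_n)\to\infty$ and Claim~\ref{claim:bdfix2} was carried out. The present step merely propagates that disjointness through the $\hat{f}$-orbit by exploiting the invariance of $U_1+v$.
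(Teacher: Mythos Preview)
Your proof is correct and follows exactly the same approach as the paper's proof. The paper's argument is the same two-step reasoning: from $W\cap(W+v)=\emptyset$ and $U_1+v\subset W+v$ deduce $W\cap(U_1+v)=\emptyset$, then use the $\hat{f}$-invariance of $U_1+v$ to propagate this through the orbit defining $\mc{O}$.
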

\begin{proof}
Indeed, since $W\cap (W+v)=\emptyset$, in particular $W\cap (U_1+v)=\emptyset$. Since $U_1+v$ is invariant, it follows that $\hat{f}^k(W)\cap (U_1+v)=\emptyset$ for any $k\in \Z$, and the claim follows.
\end{proof}

\begin{claim} $\mc{O}\cap (\mc{O}+v)=\emptyset$ for each $v\in \Z^2_*$ (\ie $\pi(\mc{O})$ is inessential).
\end{claim}

\begin{figure}[ht]
\centering
\includegraphics[width = \textwidth]{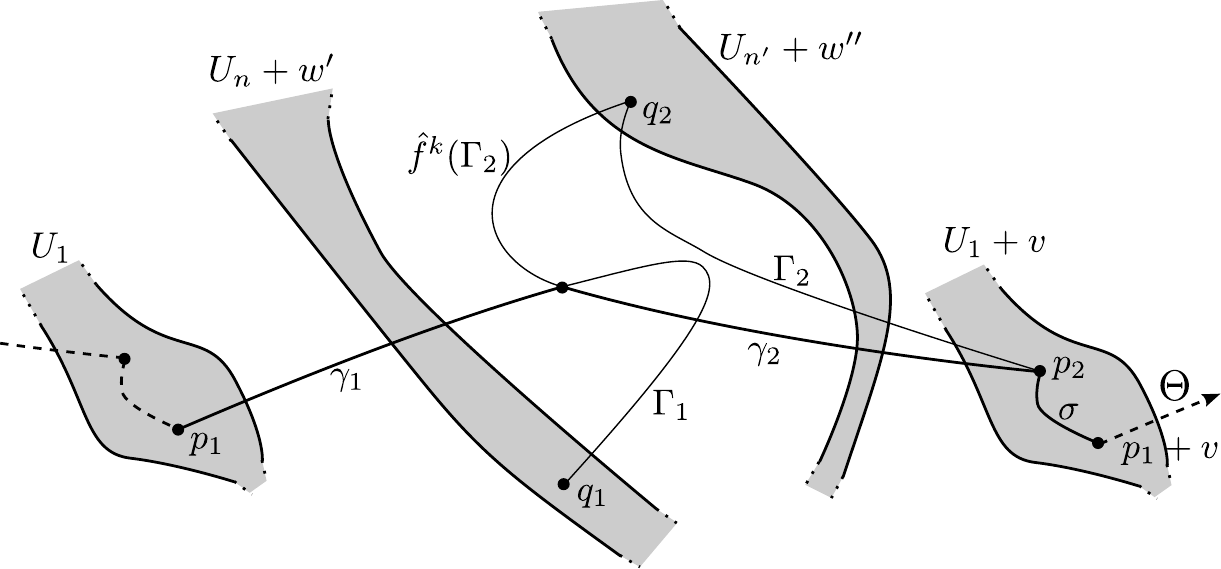}
\caption{$U_n$ has a translate intersecting $\Theta$ for at most two $n's$.}
\label{fig:claimO}
\end{figure}
\begin{proof} If $\mc{O}\cap (\mc{O}+v)\neq \emptyset$ and $v\neq 0$, then $W\cap (\mc{O}+v)\neq \emptyset$, This means that there are leaves $\Gamma_1$ and $\Gamma_2$ of $\hat{\mc{F}}$ such that $\Gamma_1$ has endpoints $p_1,q_1\in \hat{X}$ with $p_1\in U_1$, and $\Gamma_2$ has endpoints $p_2,q_2\in \hat{X}$ with $p_2\in U_1+v$, and there is an integer $k$ such that $f^k(\Gamma_2)\cap \Gamma_1\neq \emptyset$. Let $\gamma_1$ be a subarc of $\Gamma_1$ from $p_1$ to some intersection point $z\in f^k(\Gamma_2)\cap \Gamma_1$, and $\gamma_2$ a subarc of $f^k(\Gamma_2)$ from $z$ to $p_2$. Finally let $\sigma$ be an arc in $U_1+v$ joining $p_2$ to $p_1+v$. 

Like in the previous arguments, the fact that $\diam(P_{v^\perp}(U_n))\to \infty$ as $n\to \infty$ implies that when $n$ is large enough, there is $w\in \Z^2$ such that $U_n+w$ intersects $[\gamma_1*\gamma_2*\sigma]+mv$ for some $m\in \Z$, and as before, the fact that $\pi(U_n)\neq \pi(U_1)$ implies that $U_n+w$ is disjoint from $[\sigma]+mv$, so $U_n+w$ intersects either $[\gamma_1]+mv$ or $[\gamma_2]+mv$. This means that, if $w'=w-mv$, then $U_n+w'$ intersects either $\Gamma_1$ or $f^k(\Gamma_2)$. But since $U_n+w'$ is invariant, this implies that $U_n+w'$ intersects either $\Gamma_1$ or $\Gamma_2$. Again, Claim \ref{claim:bdfix2} implies that one of the endpoints of $\Gamma_1$ or $\Gamma_2$ is in $U_n+w'$. Since $\pi(p_1)\in \pi(U_1)\notin \pi(U_n)$ and similarly $\pi(p_2)\notin \pi(U_n)$, it follows that one of the points $q_1$ or $q_2$ is in $U_n+w'$. Without loss of generality, suppose that $q_1\in U_n+w'$.

Repeating the previous argument with another (sufficiently large) integer $n'> n$, we conclude that one of the points $q_1$ or $q_2$ is in $U_{n'}+w''$ for some $w''\in \Z^2$. Since $q_1\in U_n+w'$ which is disjoint from $U_{n'}+w''$ (because $\pi(U_{n'})\neq \pi(U_n)$) we conclude that $q_2\in U_{n'}+w''$. See Figure \ref{fig:claimO}.

But repeating this argument a third time, with some $n''>n'$, we conclude that there is $w'''\in \Z^2$ such that $U_{n''}+w'''$ contains $q_1$ or $q_2$, and this is not possible since $U_{n''}$ is disjoint from $U_{n'}+w''$ and $U_n+w'$.
This contradiction proves the claim.

\end{proof}

The next two steps are proved identically to Claims \ref{claim:bdfix8} and \ref{claim:bdfix9}.
\begin{claim} If a regular leaf $\Gamma$ of $\hat{\mc{F}}$ intersects $U_1$, then it is contained in $U_1$.
\end{claim}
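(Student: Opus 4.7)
The plan is to follow essentially the same strategy as in Claim~\ref{claim:bdfix8}, using the maximality guaranteed by Claim~\ref{claim:bdfix7}: form the filling $\til{\mc{O}} = \fill(\mc{O})$ of the $\hat{f}$-orbit of the leaf-saturation $W$ of $U_1$, and show that it coincides with $U_1$.

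First I would verify that $\til{\mc{O}}$ inherits all the properties needed to invoke maximality. Since $\mc{O}$ is open, connected, and $\hat{f}$-invariant, so is $\til{\mc{O}} = \fill(\mc{O})$; moreover $\til{\mc{O}}$ is simply connected by construction (filling an open connected set kills its inessential complementary components, and the just-proved inessentiality of $\pi(\mc{O})$ rules out essential complementary behavior). By Proposition~\ref{pro:essential-set}(5), $\til{\mc{O}}$ is $\hat{f}$-invariant, and by Proposition~\ref{pro:essential-set}(2), $\pi(\til{\mc{O}})$ is inessential since $\pi(\mc{O})$ is. Hence $\pi(\til{\mc{O}})$ is an open, connected, $f$-invariant, simply connected subset of $\T^2$ that contains $\pi(U_1)$.

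Now I invoke the maximality assumption from Claim~\ref{claim:bdfix7}: $\pi(U_1)$ cannot be properly contained in another set with these same properties, so $\pi(\til{\mc{O}}) = \pi(U_1)$. Because $\pi(\til{\mc{O}})$ is inessential, $\til{\mc{O}}$ projects injectively onto $\pi(\til{\mc{O}})$ and is a connected component of $\pi^{-1}(\pi(U_1))$; since it contains $U_1$, which is itself such a component, we conclude $\til{\mc{O}} = U_1$.

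Finally, if $\Gamma$ is any regular leaf of $\hat{\mc{F}}$ meeting $U_1$, then by definition of $W$ we have $[\Gamma] \subset W \subset \mc{O} \subset \til{\mc{O}} = U_1$, as desired. I do not foresee a serious obstacle here since the argument is structurally identical to that of Claim~\ref{claim:bdfix8}; the only point requiring care is the passage from $\pi(\til{\mc{O}}) = \pi(U_1)$ to $\til{\mc{O}} = U_1$, which works precisely because $\pi(\til{\mc{O}})$ is inessential and $U_1$ is already a full connected component of its preimage.
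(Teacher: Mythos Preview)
Your proposal is correct and follows essentially the same approach as the paper, which simply states that this claim is ``proved identically to Claim~\ref{claim:bdfix8}.'' Your write-up is in fact slightly more careful than the paper's sketch in making explicit the passage from $\pi(\til{\mc{O}})=\pi(U_1)$ to $\til{\mc{O}}=U_1$ via the inessentiality of $\pi(\til{\mc{O}})$.
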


\begin{claim} $\bd U_1\subset \hat{X}$
\end{claim}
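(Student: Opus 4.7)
The plan is to mirror the argument that proved Claim \ref{claim:bdfix9} in the unbounded setting, now using the two preceding claims established in the bounded setting. I would argue by contradiction: suppose there exists a point $z\in \bd U_1$ with $z\notin \hat{X}$. Then $z$ lies on some regular leaf $\Gamma$ of $\hat{\mc{F}}$, so $[\Gamma]\cap \cl(U_1)\neq \emptyset$. Since $U_1\cap \hat{X}\neq \emptyset$, Claim \ref{claim:bdfix2} (applied with $n=1$, $v=0$) guarantees that $\Gamma$ has one of its two endpoints, say $p$, in $U_1$. Because $p$ is a singularity of $\hat{\mc{F}}$ lying in the \emph{open} set $U_1$, and the leaf $\Gamma$ has $p$ as an $\alpha$- or $\omega$-limit, $\Gamma$ enters every neighborhood of $p$; in particular $[\Gamma]\cap U_1\neq \emptyset$.

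Next, I invoke the immediately preceding claim (the analog of Claim \ref{claim:bdfix8} in the bounded setting), which asserts that any regular leaf of $\hat{\mc{F}}$ intersecting $U_1$ is contained in $U_1$. Therefore $[\Gamma]\subset U_1$. However $U_1$ is open and $z\in \bd U_1$, so $z\notin U_1$, contradicting $z\in [\Gamma]\subset U_1$. This contradiction proves $\bd U_1 \subset \hat{X}$.

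There is no real technical obstacle once the previous two claims are in place; the argument is essentially a transcription of the proof of Claim \ref{claim:bdfix9}, and the only subtlety is noting that an endpoint of $\Gamma$ belonging to the open set $U_1$ forces $\Gamma$ itself to meet $U_1$, so that the preceding claim can be applied. The purpose of the statement is of course to derive the final contradiction that finishes the proof of Theorem \ref{th:bdfix}: $\hat{X}$ is totally disconnected (since $\fix(f)$ is, by Claim \ref{claim:bdfix13}, and $\hat{X}\subset \pi^{-1}(\fix(f))$), whereas $U_1$ is a bounded simply connected open subset of $\R^2$ and hence its boundary is a nontrivial planar continuum which cannot be contained in any totally disconnected set.
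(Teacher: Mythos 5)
Your proof is correct and is essentially the paper's own argument: the paper proves this claim "identically to Claim \ref{claim:bdfix9}," i.e.\ by combining Claim \ref{claim:bdfix2} (a leaf meeting $\cl(U_1)$ has an endpoint in $U_1$) with the preceding claim (a leaf meeting $U_1$ lies in $U_1$) to contradict the existence of a regular leaf through a boundary point. Your explicit remark that an endpoint in the open set $U_1$ forces the leaf to enter $U_1$ (since that endpoint is its $\alpha$- or $\omega$-limit) is exactly the implicit step in the paper's one-line proof.
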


Again, the last claim is a contradiction, since $\hat{X}$ is totally disconnected.
This concludes the proof of Theorem \ref{th:bdfix}.

\bibliographystyle{koro} 
\bibliography{essential}

\newcommand{\etalchar}[1]{$^{#1}$}
\providecommand{\bysame}{\leavevmode\hbox to3em{\hrulefill}\thinspace}
\providecommand{\MR}{\relax\ifhmode\unskip\space\fi MR }
\providecommand{\MRhref}[2]{%
  \href{http://www.ams.org/mathscinet-getitem?mr=#1}{#2}
}
\providecommand{\href}[2]{#2}
\begin{thebibliography}{LKFA90}

\bibitem[BK84]{brown-invariant}
M.~Brown and J.~M. Kister, \emph{Invariance of complementary domains of a fixed
  point set}, Proc. Amer. Math. Soc. \textbf{91} (1984), no.~3, 503--504.

\bibitem[Bro71]{brown-nielsen}
R.~Brown, \emph{The {L}efschetz fixed point theorem}, Scott Foresman and Co.,
  1971.

\bibitem[Chi79]{x_1}
B.~V. Chirikov, \emph{A universal instability of many-dimensional oscillator
  systems}, Phys. Rep. \textbf{52} (1979), no.~5, 264--379.

\bibitem[Dav86]{daverman}
R.~J. Daverman, \emph{Decompositions of manifolds}, Pure and Applied
  Mathematics, vol. 124, Academic Press Inc., Orlando, FL, 1986.

\bibitem[D{\'a}v13]{davalos}
P.~D{\'a}valos, \emph{On torus homeomorphisms whose rotation set is an
  interval}, Math.~Z. (2013), doi:10.1007/s00209-013-1168-3.

\bibitem[Eps66]{epstein}
D.~B.~A. Epstein, \emph{Curves on $2$-manifolds and isotopies}, Acta
  Mathematica \textbf{115} (1966), 83--107.

\bibitem[Fat87]{fathi}
A.~Fathi, \emph{An orbit closing proof of {B}rouwer's lemma on translation
  arcs}, Enseign. Math. (2) \textbf{33} (1987), no.~3-4, 315--322.

\bibitem[Fra88]{franks-reali2}
J.~Franks, \emph{Recurrence and fixed points of surface homeomorphisms},
  Ergodic Theory Dynam. Systems \textbf{8$^*$} (1988), no.~Charles Conley
  Memorial Issue, 99--107.

\bibitem[Fra89]{franks-reali}
\bysame, \emph{Realizing rotation vectors for torus homeomorphisms}, Trans.
  Amer. Math. Soc. \textbf{311} (1989), no.~1, 107--115.

\bibitem[Fra95]{franks-reali3}
\bysame, \emph{The rotation set and periodic points for torus homeomorphisms},
  Dynamical systems and chaos, {V}ol. 1 ({H}achioji, 1994), World Sci. Publ.,
  River Edge, NJ, 1995, pp.~41--48.

\bibitem[Fur61]{furstenberg}
H.~Furstenberg, \emph{Strict ergodicity and transformation of the torus}, Amer.
  J. Math. \textbf{83} (1961), 573--601.

\bibitem[Gut79]{gutierrez}
C.~Guti{\'e}rrez, \emph{Smoothing continuous flows and the converse of
  {D}enjoy-{S}chwartz theorem}, An. Acad. Brasil. Ci\^enc. \textbf{51} (1979),
  no.~4, 581--589.

\bibitem[J{\"a}g09a]{jager-bmm}
T.~J{\"a}ger, \emph{The concept of bounded mean motion for toral
  homeomorphisms}, Dyn. Syst. \textbf{24} (2009), no.~3, 277--297.

\bibitem[J{\"a}g09b]{jager-linearization}
\bysame, \emph{Linearization of conservative toral homeomorphisms}, Invent.
  Math. \textbf{176} (2009), no.~3, 601--616.

\bibitem[J{\"a}g11]{jager-elliptic}
\bysame, \emph{{Elliptic stars in a chaotic night}}, J. London Math. Soc.
  \textbf{84} (2011), no.~3, 595--611.

\bibitem[Jau11]{jaulent}
O.~Jaulent, \emph{Existence d'un feuilletage positivement transverse \`a un
  hom\'eomorphisme de surface}, (preprint), 2011.

\bibitem[KK09]{kk-spreading}
A.~Kocsard and A.~Koropecki, \emph{A mixing-like property and inexistence of
  invariant foliations for minimal diffeomorphisms of the 2-torus}, Proc. Amer.
  Math. Soc. \textbf{137} (2009), no.~10, 3379--3386.

\bibitem[Kor10]{koro}
A.~Koropecki, \emph{Aperiodic invariant continua for surface homeomorphisms},
  Math. Z. \textbf{266} (2010), no.~1, 229--236.

\bibitem[KT12a]{kt-irrotational}
A.~{Koropecki} and F.~A. {Tal}, \emph{{Area-preserving irrotational
  diffeomorphisms of the torus with sublinear diffusion}}, to appear in Proc.
  Amer. Math. Soc., eprint arXiv:1206.2409 (2012).

\bibitem[KT12b]{kt-pseudo}
A.~Koropecki and F.~A. Tal, \emph{Bounded and unbounded behavior for
  area-preserving rational pseudo-rotations}, eprint arXiv:1207.5573 (2012).

\bibitem[LC05]{lecalvez-equivariant}
P.~Le~Calvez, \emph{Une version feuillet\'ee \'equivariante du th\'eor\`eme de
  translation de {B}rouwer}, Publ. Math. Inst. Hautes \'Etudes Sci. (2005),
  no.~102, 1--98.

\bibitem[LC06]{lecalvez-hamiltonian}
\bysame, \emph{Periodic orbits of {H}amiltonian homeomorphisms of surfaces},
  Duke Math. J. \textbf{133} (2006), no.~1, 125--184.

\bibitem[LKFA90]{harper}
P.~Leb{\oe}uf, J.~Kurchan, M.~Feingold, and D.~P. Arovas, \emph{Phase-space
  localization: topological aspects of quantum chaos}, Phys. Rev. Lett.
  \textbf{65} (1990), no.~25, 3076--3079.

\bibitem[LM91]{llibre-mackay}
J.~Llibre and R.~S. MacKay, \emph{Rotation vectors and entropy for
  homeomorphisms of the torus isotopic to the identity}, Ergodic Theory Dynam.
  Systems \textbf{11} (1991), no.~1, 115--128.

\bibitem[Mos62]{moser-kam}
J.~Moser, \emph{On invariant curves of area-preserving mappings of an annulus},
  Nachr. Akad. Wiss. G\"ottingen Math.-Phys. Kl. II (1962), 1--20.

\bibitem[MZ89]{m-z}
M.~Misiurewicz and K.~Ziemian, \emph{Rotation sets for maps of tori}, Journal
  of the London Mathematical Society \textbf{40} (1989), no.~2, 490--506.

\bibitem[MZ91]{m-z2}
M.~Misiurewicz and K.~Ziemian, \emph{Rotation sets and ergodic measures for
  torus homeomorphisms}, Fund. Math. \textbf{137} (1991), no.~1, 45--52.

\bibitem[NS89]{stepanov}
V.~V. Nemitskii and V.~V. Stepanov, \emph{Qualitative theory of differential
  equations}, Courier Dover Publications, 1989.

\bibitem[PRK97]{pekarsky}
S.~Pekarsky and V.~Rom-Kedar, \emph{Uniform stochastic web in low-dimensional
  {H}amiltonian systems}, Phys. Lett. A \textbf{225} (1997), no.~4-6, 274--286.

\bibitem[Sol45]{solntzev}
G.~Solntzev, \emph{On the asymptotic behaviour of integral curves of a system
  of differential equations}, Bull. Acad. Sci. URSS. S\'er. Math. [Izvestia
  Akad. Nauk SSSR] \textbf{9} (1945), 233--240.

\bibitem[Whi33]{whitney}
H.~Whitney, \emph{Regular families of curves}, Ann. of Math. (2) \textbf{34}
  (1933), no.~2, 244--270.

\bibitem[Whi41]{whitney2}
\bysame, \emph{On regular families of curves}, Bull. Amer. Math. Soc.
  \textbf{47} (1941), 145--147.

\bibitem[ZZS{\etalchar{+}}86]{x_3}
G.~M. Zaslavski{\u\i}, M.~Y. Zakharov, R.~Z. Sagdeev, D.~A. Usikov, and A.~A.
  Chernikov, \emph{Stochastic web and diffusion of particles in a magnetic
  field}, Soviet Phys. JETP \textbf{64} (1986), no.~2, 294Ð--303, Translated
  from Zh. éksper. Teoret. Fiz. 91 (1986), no. 2, 500-516.

\end{thebibliography}

\end{document}